\definecolor{rred}{rgb}{0.7,0.0,0.2}
\definecolor{bblue}{rgb}{0.2,0.0,0.7}
\newcommand{\secref}[1]{section \ref{sec:#1}}
\newcommand{\seclab}[1]{\label{sec:#1}}
\newcommand{\secsref}[2]{sections \ref{sec:#1} and~\ref{sec:#2}}
\newcommand{\Secsref}[2]{Sections \ref{sec:#1} and~\ref{sec:#2}}
\newcommand{\eqlab}[1]{\label{eq:#1}}
\renewcommand{\eqref}[1]{(\ref{eq:#1})}
\newcommand{\eqsref}[2]{(\ref{eq:#1}) and~(\ref{eq:#2})}
\newcommand{\figref}[1]{Fig.~\ref{fig:#1}}
\newcommand{\figlab}[1]{\label{fig:#1}}
\newcommand{\propref}[1]{Proposition~\ref{proposition:#1}}
\newcommand{\proplab}[1]{\label{proposition:#1}}
\newcommand{\lemmaref}[1]{Lemma~\ref{lemma:#1}}
\newcommand{\lemmalab}[1]{\label{lemma:#1}}
\newcommand{\remref}[1]{Remark~\ref{remark:#1}}
\newcommand{\remlab}[1]{\label{remark:#1}}
\newcommand{\thmref}[1]{Theorem~\ref{theorem:#1}}
\newcommand{\thmlab}[1]{\label{theorem:#1}}
\newcommand{\tablab}[1]{\label{tab:#1}}
\newcommand{\tabref}[1]{Table~\ref{tab:#1}}
\newcommand{\asuref}[1]{Assumption~\ref{assumption:#1}}
\newcommand{\asulab}[1]{\label{assumption:#1}}
\newtheorem{con}{Conjecture}
\newtheorem{asu}{Assumption}
\title{Regularizations of two-fold bifurcations in planar piecewise smooth systems using blowup} 
\author{K. Uldall Kristiansen and S. J. Hogan\thanks{K. Uldall Kristiansen: Department of Applied Mathematics and Computer Science, Technical University of Denmark, 2800 Kgs. Lyngby, DK. S. J. Hogan: Department of Engineering Mathematics, University of Bristol, Bristol BS8 1UB, United Kingdom. S.J. Hogan wishes to thank both Danmarks Nationalbank and the Otto M{\o}nsteds Fond for support. In addition, he is extremely grateful to Morten Br{\o}ns for hosting a very productive sabbatical at DTU, Lyngby, Denmark from January to September 2014. }}
\begin{document}
\maketitle 

\begin{abstract}
We use blowup to study the regularization of codimension one two-fold singularities in planar piecewise smooth (PWS) dynamical systems. We focus on singular canards, pseudo-equlibria and limit cycles that can occur in the PWS system. Using the regularization of Sotomayor and Teixeira \cite{Sotomayor96}, we show rigorously how singular canards can persist and how the bifurcation of pseudo-equilibria is related to bifurcations of equilibria in the regularized system. We also show that PWS limit cycles are connected to Hopf bifurcations of the regularization. In addition, we show how regularization can create another type of limit cycle that does not appear to be present in the original PWS system. For both types of limit cycle, we show that the criticality of the Hopf bifurcation that gives rise to periodic orbits is strongly dependent on the precise form of the regularization. Finally, we analyse the limit cycles as locally unique families of periodic orbits of the regularization and connect them, when possible, to limit cycles of the PWS system. We illustrate our analysis with numerical simulations and show how the regularized system can undergo a canard explosion phenomenon.
\end{abstract}

\begin{keywords} 
Piecewise smooth systems, blowup, geometric singular perturbation theory, sliding bifurcations, canards, pseudo-equilibrium, limit cycles.
\end{keywords}

\begin{AMS}
37G10, 34E15, 37M99
\end{AMS}

\pagestyle{myheadings}
\thispagestyle{plain}
\section{Introduction}\seclab{Introduction}
Piecewise smooth (PWS) dynamical systems \cite{filippov1988differential, MakarenkovLamb12} are of great significance in applications \cite{Bernardo08}, ranging from problems in mechanics (friction, impact) and biology (genetic regulatory networks) to control engineering \cite{Utkin77}. But, compared to smooth systems \cite{Guckenheimer97}, the study of PWS systems is in its infancy. For example, notions of solution, trajectory, separatrix, topological equivalence and bifurcation, all need revision and extension \cite{filippov1988differential}. Often PWS systems are used as caricatures of smooth systems \cite{Carmona08, Michelson86}, especially if significant amounts of computation are expected. So one of the major challenges of PWS system theory is to see just how close the behaviour of a PWS system is to a suitable smooth system. 

In this paper, we focus on PWS systems in the plane, of the form:
\begin{equation}
\dot{\mathbf{x}}=X^{\pm}(\mathbf{x}), \quad \mathbf{x}\in \Sigma_\pm\subset \mathbb{R}^2, 
\eqlab{pwsdef}
\end{equation}
where the smooth vector fields $X^\pm$, defined on disjoint open regions $\Sigma_\pm$, are smoothly extendable to their common boundary $\Sigma$. The line $\Sigma$ is called the \textit{switching manifold} or \textit{switching boundary}. The union $\Sigma \cup \Sigma_-\cup \Sigma_+$ covers the whole state space.  When the normal components of the vector fields on either side of $\Sigma$ are in \textit{opposition}, a vector field needs to be defined \textit{on} $\Sigma$. The precise choice is not unique and crucially depends on the nature of the problem under consideration. We adopt the widely-used Filippov convention \cite{filippov1988differential}, where a \textit{sliding} vector field is defined on $\Sigma$. In this case, the dynamics is described as \textit{sliding} and the PWS system \eqref{pwsdef}, together with the sliding vector field, constitute a {\it Filippov system}. Such systems possess many phenomena that are not present in smooth systems; grazing and sliding bifurcations, period adding bifurcations and chattering are (almost) ubiquitous in and (virtually) unique to PWS systems.

Sotomayor and Teixeira \cite{Sotomayor96} proposed a regularization of a planar PWS dynamical system, in which the switching manifold $\Sigma$ is replaced by a boundary layer of width $2\epsilon$. Outside the boundary layer, the regularization agrees exactly with the PWS vector fields. Inside the boundary layer, a monotonic function is chosen such that the regularization is at least continuous everywhere. The regualization of PWS systems in $\mathbb{R}^3$ was considered by \cite{Llibre97} and in $\mathbb{R}^n$ by \cite{llibre_sliding_2008}.  

It is natural to ask whether bifurcations in PWS systems are close to bifurcations in a suitable smooth system. But for any regularization, there is a fundamental difficulty when dealing with bifurcations. Fenichel theory \cite{fen1, fen2, fen3, jones_1995}, the main tool used to analyze regularization, requires hyperbolicity, which is lost at a PWS bifurcation. A widely used approach to deal with this loss of hyperbolicity is the \textit{blowup method}, originally due to Dumortier and Roussarie \cite{dumortier_1991, dumortier_1993, dumortier_1996}, and subsequently developed by Krupa and Szmolyan \cite{krupa_extending_2001} in the context of slow-fast systems. 

Buzzi \textit{et al.} \cite{Buzzi06} considered how different PWS phenomena\footnote{For example, they considered crossing, stable and unstable sliding, pseudo-saddle-nodes and two-folds.} in the plane were affected by the regularization method of Sotomayor and Teixeira \cite{Sotomayor96}. A similar study in $\mathbb{R}^3$ was carried out by Llibre \textit{et al.} \cite{Llibre07}. Regularization of PWS systems in $\mathbb{R}^n$ was considered by Llibre \textit{et al.} \cite{llibre_sliding_2008}. These three papers considered the case of one switching manifold separating two different smooth vector fields. Regularization in the case of two intersecting switching manifolds was considered by Llibre \textit{et al.} \cite{Llibre09}, and in the case of surfaces of algebraic variety by Buzzi \textit{et al.} \cite{Buzzi12}. Regularization of codimension one bifurcations in planar PWS systems was considered by De Carvalho and Tonon \cite{DecarvalhoTonon2011}. Common to all of these studies, however, is that they do not deal rigorously with the loss of hyperbolicity at a PWS bifurcation and hence they do not properly unfold the effect of the regularization.  

Recently, Kristiansen and Hogan \cite{KristiansenHogan2015} successfully applied the blowup method of Krupa and Szmolyan \cite{krupa_extending_2001} to study the regularization of both fold and two-fold singularities of PWS dynamical systems in $\mathbb{R}^3$. For two-fold singularities, they showed that the regularized system only fully retains the features of the PWS singular canards when the sliding region does not include a full sector of singular canards. In particular, they showed that every locally unique singular canard persists the regularizing perturbation. For the case of a sector of singular canards, they showed that the regularized system contains a primary canard, provided a certain non-resonance condition holds and they provided numerical evidence for the existence of secondary canards near resonance. Other authors \cite{reves_regularization_2014} have used asymptotic methods to analyze the regularization of a planar PWS fold bifurcation. 

In this paper, we regularize planar codimension one two-fold singularities that occur as the result of collisions of folds (quadratic tangencies) in \textit{both} $X^-$ and $X^+$. We seek to identify PWS bifurcations as smooth bifurcations through regularization. We will study the fate of singular canards, pseudo-equilibria and limit cycles that can occur in our PWS system. We illustrate our analytical results with numerical simulations and show how the regularized system can undergo a canard explosion phenomenon. 



The paper is organized as follows. In \secref{preliminaries}, we set up the problem, define the two-fold singularities we wish to regularize and present our PWS planar system in a normalized form such that the sliding regions retain their character under parameter variation. In \secref{pws}, we describe those properties of the PWS system that we wish to regularize, paying particular attention to singular canards, pseudo-equilibria and limit cycles.  Then in \secref{regularize}, we present a regularized version of our PWS system, using the approach of Sotomayor and Teixeira \cite{Sotomayor96}. Before beginning our analysis, we collect together all our main results in \secref{mainResult}, giving the reader a concise summary of what is to come. In \secref{canards}, we carry out a blowup analysis \cite{krupa_extending_2001} and show how singular canards persist the regularization. Our PWS system can also exhibit pseudo-equilibria, so in \secref{equilibriaregular}, we consider how these unique PWS phenomena survive regularization. In \secref{limitCycles}, we show how limit cycles that are present in the original PWS system behave when regularized. In addition, we show how regularization can create another type of limit cycle that does not appear to be present in the original PWS system. For both types of limit cycle, we show how that the criticality of the Hopf bifurcation that gives rise to periodic orbits is strongly dependent on the precise form of the regularization. Some numerical results are presented in \secref{numerics} to illustrate our analysis. Our conclusions are presented in \secref{conclusions}.

\section{Preliminaries}\seclab{preliminaries}
In this section we set up the problem, define two-fold singularities and present our PWS planar system in a suitable normalized form. Let $\textbf x=(x,y)\in \mathbb R^2$, $\mu\in \mathbb R$. Consider an open set $(\textbf{x},\mu)\in \mathcal U\times \mathcal I$ and a smooth function $f_\mu(\textbf x)$ having $0$ as a regular value for all $\mu\in \mathcal I$. Then $\Sigma \subset \mathcal U$ defined by $\Sigma=f_\mu^{-1}(0)$ is a smooth $1D$ manifold. The manifold $\Sigma$ is our switching boundary. It separates the set $\Sigma_{+} =\{(x,y)\in \mathcal U\vert f_\mu(x,y)>0\}$ from the set $\Sigma_{-}=\{(x,y)\vert f_\mu(x,y)<0\}$. We introduce local coordinates so that $f_\mu(x,y)=y$ and so $\Sigma =\{(x,y)\in \mathcal U\vert y=0\}$. From now on, we suppress the subscript $\mu$.

We consider two smooth vector-fields $X^+$ and $X^-$ that are smooth on $\overline{\Sigma}_+$ and $\overline{\Sigma}_-$, respectively, and define the PWS vector-field $X=(X^-,X^+)$ by
\begin{eqnarray}
  X(\textbf x,\mu) 
  =\left\{ \begin{array}{cc}
                                   X^-(\textbf x,\mu)& \text{for}\quad  \textbf x\in \Sigma_-\\
                                   X^+(\textbf x,\mu)& \text{for}\quad \textbf x\in \Sigma_+
                                  \end{array}\right.\eqlab{Xpm}
\end{eqnarray}
Then
\begin{itemize}
 \item $\Sigma_{cr}\subset \Sigma$ is the \textit{crossing region} where $(X^+ f(x,0,\mu)(X^-f(x,0,\mu)) =X_2^+(x,0,\mu) X_2^-(x,0,\mu) >0$.
 \item $\Sigma_{sl}\subset \Sigma$ is the \textit{sliding region} where $(X^+ f(x,0,\mu))(X^-f(x,0,\mu)) =X_2^+(x,0,\mu) X_2^-(x,0,\mu)<0$.
\end{itemize}
Here $X^{\pm} f(\cdot,\mu)=\nabla f \cdot X^{\pm}(\cdot,\mu)$ denotes the Lie-derivative of $f$ along $X^{\pm}(\cdot,\mu)$. Since $f(x,y)=y$ in our coordinates we have that $X^{\pm} f =X_2^{\pm}$. 

In the sliding region, the vector fields on either side of $\Sigma_{sl}$ point either toward or away from $\Sigma_{sl}$. In this case, in order to have a solution to our system in forward or backward time, we need to define a vector field \textit{on} $\Sigma_{sl}$. There are many possibilities, depending on the problem being considered. One of the most widely adopted definitions is the Filippov convention \cite{filippov1988differential}, in which the {\it sliding vector field} $X_{sl}(\textbf x,\mu)$ is taken to be the convex combination of $X^+$ and $X^-$:
\begin{eqnarray}
 X_{sl}(\textbf x,\mu) = \sigma X^+(\textbf x,\mu)+ (1-\sigma) X^-(\textbf x,\mu),\eqlab{XSliding}
\end{eqnarray}
where $\sigma \in (0,1)$ is such that $X_{sl}(\textbf x,\mu)$ is tangent to $\Sigma_{sl}$. In this case,
\begin{eqnarray}
 \sigma = \frac{X^-f(x,0,\mu)}{X^-f(x,0,\mu)-X^+f(x,0,\mu)}.\eqlab{lambdaSliding}
\end{eqnarray}

The sliding vector field $X_{sl}(\textbf x,\mu)$ can have equilibria (\textit{pseudo-equilibria}, or sometimes \textit{quasi-equilibria}  \cite{filippov1988differential}). Unlike in smooth systems, it is possible for trajectories to reach these pseudo-equilibria in finite time. An orbit of a PWS system can be made up of a concatenation of arcs from $\Sigma$ and $\Sigma_{\pm}$.

\subsection{Two-fold singularities}\seclab{fold}
The boundaries of $\Sigma_{sl}$ and $\Sigma_{cr}$ where $X^+ f = X_2^+=0$ or $X^-f = X_2^-=0$ are singularities called \textit{tangencies}. The simplest tangency is the fold singularity, which is defined as follows.
\begin{definition}
A point $q\in \Sigma$ for $\mu \in \mathcal I$ is a  \textnormal{fold singularity} if 
\begin{eqnarray}
X^{+} f(q,\mu)=0,\quad X^+(X^+f)(q,\mu)\ne 0,\eqlab{foldPlus}
\end{eqnarray}
or if
\begin{eqnarray}
X^{-} f(q,\mu)=0,\quad X^-(X^-f)(q,\mu)\ne 0.\eqlab{foldNegative}
\end{eqnarray}
A fold singularity $q$ with $X^{\pm} f(q,\mu)=0$ is \textnormal{visible} if
 \begin{eqnarray}
 X^{\pm }(X^{\pm } f)(q,\mu) \gtrless 0,\eqlab{visible}
 \end{eqnarray}
and \textnormal{invisible} if
\begin{eqnarray}
 X^{\pm}(X^{\pm} f)(q,\mu) \lessgtr 0.\eqlab{invisible}
\end{eqnarray}
\end{definition}

Note that,  for $\mu$ sufficiently small, the inequalities in \eqsref{foldPlus}{foldNegative} are equivalent to the following
 \begin{align}
  \partial_x X_2^+(q,0)X_1^+(q,0)\ne 0,\,\partial_x X_2^-(q,0)X_1^-(q,0)\ne 0.\eqlab{nondeg}
 \end{align}
 
In this paper, we consider the case of the \textit{two-fold} singularity, when there is a fold singularity in {\it both} $X^{\pm}$. In particular, we suppose that $X^{\pm}$ have tangencies at $q^{\pm}=q^{\pm}(\mu)\in \Sigma$, respectively, which collide for $\mu=0$ at $q=q^{\pm}(0)$ with non-zero velocity. {Hence $(q^+-q^-)'(0)\ne 0$.}

\begin{definition}
 We say that the \textnormal{two-fold} singularity $q$ is 
 \begin{itemize}
  \item \textnormal{visible} if $q^+$ and $q^-$ are both visible;
  \item \textnormal{visible-invisible} if $q^+$ ($q^-$) is visible and $q^-$ ($q^+)$ is invisible;
  \item \textnormal{invisible} if $q^+$ and $q^-$ are both invisible. 
 \end{itemize}
\end{definition}

The three different types of two-fold singularity are shown in \figref{visibleInvisible}.
\begin{figure}
\begin{center}
\includegraphics[width=.95\textwidth]{./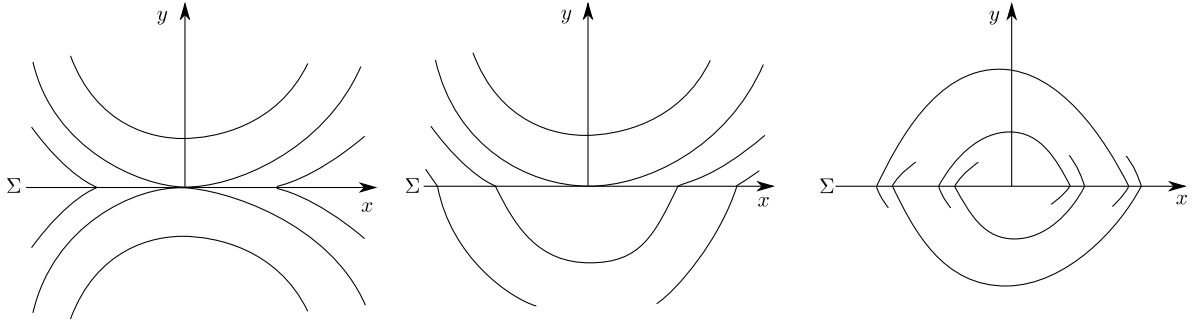}
\caption{The three different types of two-fold singularity studied in this paper: (from L to R) visible, visible-invisible, invisible. Following Filippov \cite{filippov1988differential}, we show neither flow directions nor any sliding vector field.}\figlab{visibleInvisible}
\end{center}
\end{figure}
In the case of a single fold singularity, it is known that both the visible and invisible cases are structurally stable \cite[p. 232]{filippov1988differential}. The regularization of the visible case was studied in \cite{reves_regularization_2014}. 
Filippov \cite[Figs. 58, 59]{filippov1988differential} also considered the case of a single {\it cusp} singularity, which can be either visible or invisible. The cusp singularity is known to be structurally unstable, bifurcating into two tangencies \cite[Figs. 76, 77]{filippov1988differential}, which are on the {\it same} side of $\Sigma$. Kuznetsov {\it et al.} \cite[Fig. 9]{Kuznetsov2003} considered these bifurcations, which they label $DT_{1,2}$, together with the cases we consider here. But we feel that the cusp singularity is best left for future work, as part of the wider picture that includes cusp-fold and two-cusp singularities. 
The two-fold singularities that we consider are shown in Filippov \cite[Figs. 64, 65, 67, 68]{filippov1988differential}, where they are termed {\it type 3 singularities}\footnote{Other type 3 singularities, shown in \cite[Figs. 66, 69, 70, 71]{filippov1988differential}, have codimension greater than one (see \cite[p. 239]{filippov1988differential}). They include cusp-fold and two-cusp singularities.}. There are $7$ different generic cases. These were subsequently called $VV_{1,2}, VI_{1-3}, II_{1,2}$ by Kuznetsov {\it et al.} \cite{Kuznetsov2003}; a notation that we will find useful to adopt. Other authors \cite{Buzzi06, DecarvalhoTonon2011} refer to two-fold singularities as {\it fold-fold singularities}, which can be {\it hyperbolic} (visible), {\it elliptic(al)} (invisible) or {\it parabolic} (visible-invisible). 
Two-folds in $\mathbb{R}^3$ were considered by the present authors in  \cite{KristiansenHogan2015}.

\subsection{Normalized equations}\seclab{normalized}
In this section, we derive a normalized form for the equations near a two-fold singularity at $(q,\mu)=0$ in $\mathbb R^2$. 
By Taylor-expanding $X^{\pm }$, we have, for $y>0$,
\begin{align*}
 \dot x &=X_1^+(0) + \mathcal O(x+y+\mu),\\
 \dot y&=\partial_y X_2^+(0) y+\partial_x X_2^+(0)x+\mathcal O(\vert (x,y)\vert^2+\mu(x+y)),
\end{align*}
and, for $y<0$, 
\begin{align*}
 \dot x &=X_1^-(0) + \mathcal O(x+y+\mu),\\
 \dot y&=\partial_y X_2^-(0) y+\partial_x X_2^-(0)(x-\mu)+\mathcal O(\vert (x-\mu,y)\vert^2+\mu(x-\mu+y)).
\end{align*} 
We now introduce $\tilde x$ and $\tilde t$ where 
\begin{align*}
 x &= \sqrt{\left|\frac{X_1^+(0)}{\partial_y X_2^+(0)}\right|}\tilde x,\\
 t &= \frac{\text{sign}(\partial_y X_2^+(0))}{\sqrt{\vert X_1^+(0) \partial_x X_2^+(0)\vert}}\tilde t,
\end{align*}
which is well-defined, by virtue of \eqref{nondeg}. Then, on dropping tildes, we have, for $y>0$:
\begin{align}
 \dot x &=\delta + \mathcal O(x+y+\mu), \eqlab{normplus}  \\
 \dot y&=x+\mathcal O(y+\mu x+x^2), \nonumber
\end{align}
and, for $y<0$, 
\begin{align}
 \dot x &=\alpha  + \mathcal O(x+y+\mu), \eqlab{normminus}  \\
 \dot y&=-\beta (x-\mu)+\mathcal O(y+\mu (x-\mu)+(x-\mu)^2), \nonumber
\end{align} 
where $\delta = \text{sign}(X_1^+(0) \partial_x X_2^+(0))=\pm 1$. The constants 
\begin{align*}
 \alpha &= \text{sign}(\partial_y X_2^+(0))\frac{X_1^-(0)}{\vert X_1^+(0)\vert},\\
 \beta &= \text{sign}(\partial_y X_2^+(0))\frac{\partial_y X_2^-(0)}{\sqrt{\vert X_1^+(0) \partial_x X_2^+(0)\vert}},
\end{align*}
are non-zero by \eqref{nondeg}. Later on, we will need to include higher order terms in our analysis. We introduce the following coefficients: 
\begin{align}
 \zeta^{\pm},\,\chi^\pm,\,\, \text{and}\,\, \eta^{\pm},\eqlab{zetaeta}
\end{align}
so that \eqref{normplus} becomes for $y>0$:
\begin{align}
 \dot x &=\delta + \zeta^+x+\mathcal O(x^2+y+\mu), \eqlab{normplus2} \\
 \dot y&=x+\eta^+x^2+\chi^+y+\mathcal O(xy+\mu(x+y)+x^3), \nonumber
\end{align}
and \eqref{normminus} becomes for $y<0$:
\begin{align}
 \dot x &=\alpha  + \zeta^-x+\mathcal O(x^2+y+\mu), \eqlab{normminus2}  \\
 \dot y&=-\beta (x-\mu)+\eta^-(x-\mu)^2+\chi^-y+\mathcal O(xy+\mu y+\mu (x-\mu)+(x-\mu)^3). \nonumber
\end{align} 

\begin{remark}\remlab{normalform}
De Carvalho and Tonon \cite{DecarvalhoTonon2014} have given normal forms for codimension one planar PWS vector fields\footnote{De Carvalho and Tonon (private communication) have indicated that they intend to publish a corrigendum to this paper, since their normal forms, as currently stated \cite{DecarvalhoTonon2014}, can not distinguish between all the different planar PWS singularities.}. However, we need \eqref{normplus}, \eqref{normminus} and \eqref{normplus2}, \eqref{normminus2} in this form in order to unfold several of the phenomena studied in this paper. 
\end{remark}

The sliding vector field \eqref{XSliding} is given by
\begin{eqnarray}
 \dot x&=&\sigma X_1^+(x,0,\mu)+(1-\sigma )X_1^-(x,0,\mu),\eqlab{slidingEqns}\\
 \dot y&=&0,\nonumber
\end{eqnarray}
where $\sigma$, defined in \eqref{lambdaSliding}, is given by
\begin{eqnarray}
 \sigma &=& \frac{(-\beta+\mathcal O(x+\mu))(x-\mu)}{(-\beta +\mathcal O(x+\mu))(x-\mu)-(1+\mathcal O(x+\mu))x}.\eqlab{sigma}
\end{eqnarray}
The denominator in \eqref{sigma} is positive for \textit{stable sliding} $\Sigma_{sl}^-$ and negative for \textit{unstable sliding} $\Sigma_{sl}^+$. So if we multiply \eqref{slidingEqns} by the modulus of this denominator, $\vert (-\beta +\mathcal O(x+\mu))(x-\mu)-(1+\mathcal O(x+\mu))x\vert$, corresponding to a transformation of time, we find on $y=0$ that, in $\Sigma_{sl}^{\mp}$,
\begin{align}
\dot x&=\pm (-\beta +\mathcal O(x+\mu))(x-\mu)(\delta +\mathcal O(x+\mu))\pm (-1+\mathcal O(x+\mu))x(\alpha+\mathcal O(x+\mu)),\eqlab{slidingEqns2}\\
 \dot y&=0,\nonumber
\end{align}
Equilibria of \eqref{slidingEqns2} are pseudo-equilibria, which we will study in \secref{pseudo} below. 

Within $\Sigma_{sl}^-$ for $\mu=0$ we find from \eqref{slidingEqns2} that
\begin{align*}
 \dot x = (-\beta\delta +\mathcal O(x))x -(\alpha+\mathcal O(x))x = -(\beta \delta +\alpha)x+\mathcal O(x^2).
\end{align*}

\begin{proposition}\proplab{visibility}
 The fold $q^+=(0,0)$ is visible (invisible) from above if $\delta=1$ ($\delta = -1$), whereas the fold $q^-=(\mu,0)$ is visible (invisible) from below if $\alpha \beta>0$ ($\alpha \beta<0$). Hence the two-fold $q=(0,0)$ for $\mu=0$ is 
 \begin{itemize}
  \item \textnormal{visible} if $\delta = 1$ and $\alpha\beta>0$;
  \item \textnormal{visible-invisible} if $\delta = 1$ ($\delta=-1$) and $\alpha\beta<0$ ($\alpha \beta>0$);
  \item \textnormal{invisible} if $\delta = -1$ and $\alpha\beta<0$.
 \end{itemize}

 We also have that 
 \begin{align}
 \Sigma_{sl}&:\, \beta x(x-\mu)>0,\eqlab{SigmaSl}\\
  \Sigma_{cr}&:\,\beta x(x-\mu)<0,  \eqlab{SigmaCr}
 \end{align}
 for $x$ and $\mu$ sufficiently small. The subset $\Sigma_{sl}^-=\Sigma_{sl}\cap \{x<0\}$ of $\Sigma_{sl}$ is the \textnormal{stable sliding} region whereas the subset $\Sigma_{sl}^+=\Sigma_{sl} \cap \{x>0\}$ of $\Sigma_{sl}$ is the \textnormal{unstable sliding} region. The subset $\Sigma_{cr}^-=\Sigma_{cr}\cap \{x<0\}$ of $\Sigma_{cr}$ is \textnormal{crossing downwards} whereas the subset $\Sigma_{cr}^+=\Sigma_{cr}\cap \{x>0\}$ of $\Sigma_{cr}$ is \textnormal{crossing upwards}.
\end{proposition}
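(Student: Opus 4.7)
The plan is to read off everything directly from the normalized equations \eqref{normplus}, \eqref{normminus}, since they already encode $\partial_x X_2^\pm(0)$, $X_1^\pm(0)$ and $\partial_y X_2^\pm(0)$ in the constants $\delta,\alpha,\beta$, and the definitions of visibility, sliding, and crossing all reduce to evaluating Lie derivatives of $f(x,y)=y$ at the tangency points.

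First I would establish the visibility claims. Since $f=y$, we have $X^\pm f=X_2^\pm$, so
\[
X^\pm(X^\pm f)=\partial_x X_2^\pm\cdot X_1^\pm+\partial_y X_2^\pm\cdot X_2^\pm .
\]
At a fold $q^\pm$ the last term vanishes because $X_2^\pm(q^\pm)=0$. Plugging in \eqref{normplus} at $q^+=(0,0)$ gives $X^+(X^+f)(q^+)=1\cdot\delta=\delta$, so by \eqref{visible}--\eqref{invisible} the fold $q^+$ is visible iff $\delta=+1$ and invisible iff $\delta=-1$. At $q^-=(\mu,0)$, \eqref{normminus} yields $\partial_x X_2^-=-\beta$ and $X_1^-=\alpha$ to leading order, so $X^-(X^-f)(q^-)=-\alpha\beta+\mathcal O(\mu)$; using the convention that visibility for $q^-$ requires $X^-(X^-f)<0$, this gives visible iff $\alpha\beta>0$ and invisible iff $\alpha\beta<0$. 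The three-line classification of the two-fold is then an immediate combination of these two dichotomies.

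Next I would characterize $\Sigma_{sl}$ and $\Sigma_{cr}$. On $y=0$, \eqref{normplus2} and \eqref{normminus2} give
\[
X^+f(x,0,\mu)=x+\mathcal O(\mu x+x^2),\qquad X^-f(x,0,\mu)=-\beta(x-\mu)+\mathcal O(\mu(x-\mu)+(x-\mu)^2),
\]
so their product equals $-\beta x(x-\mu)$ plus higher-order terms. For $(x,\mu)$ sufficiently small the sign is governed by the leading term, whence $(X^+f)(X^-f)<0 \Leftrightarrow \beta x(x-\mu)>0$ and likewise for crossing, giving \eqref{SigmaSl}--\eqref{SigmaCr}.

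Finally, to separate stable from unstable sliding and upward from downward crossing, I would examine the signs of $X^+f$ and $X^-f$ individually. On $\Sigma_{sl}$, stable sliding requires $X^+f<0<X^-f$ (both vectors point toward $\Sigma$) and unstable sliding requires $X^+f>0>X^-f$. Since $X^+f\approx x$ to leading order, the stability is determined purely by $\operatorname{sign}(x)$: a quick case split over the sign of $\beta$ (and over the sign of $\mu$ when $\beta<0$) confirms that the accompanying sign of $X^-f$ is automatically the right one inside $\Sigma_{sl}$. This gives $\Sigma_{sl}^-=\Sigma_{sl}\cap\{x<0\}$ and $\Sigma_{sl}^+=\Sigma_{sl}\cap\{x>0\}$. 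The same argument applied on $\Sigma_{cr}$, where $X^+f$ and $X^-f$ have equal signs, yields crossing upward for $x>0$ and crossing downward for $x<0$. The only non-routine point is checking that the $\mathcal O$-remainders in \eqref{normplus2}--\eqref{normminus2} cannot flip any of these sign determinations; this is purely a smallness statement and follows from the nondegeneracy \eqref{nondeg} together with restricting to a sufficiently small neighborhood of $(x,\mu)=(0,0)$.
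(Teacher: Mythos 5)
Your proof is correct and follows essentially the same route as the paper's own (brief) proof: direct computation of the Lie derivatives from the normalized equations, the factorized product $X_2^+(x,0,\mu)X_2^-(x,0,\mu)=(1+\mathcal O(x+\mu))x(-\beta+\mathcal O(x+\mu))(x-\mu)$ to get \eqref{SigmaSl}--\eqref{SigmaCr}, and the sign of $X^+f\approx x$ to split stable/unstable sliding and upward/downward crossing. You simply spell out details the paper leaves as ``simple computations.''
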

\begin{proof}
 These statements follow from simple computations. For example, to obtain the last part, we note that
 \begin{align*}
  X_2^+(x,0,\mu) X_2^-(x,0,\mu) = (1+\mathcal O(x+\mu))x(-\beta+\mathcal O(x+\mu))(x-\mu),
 \end{align*}
and use the definition of $\Sigma_{sl}$ in \secref{preliminaries} together with \eqref{nondeg}.
\end{proof}

Henceforth, in the visible-invisible case, without loss of generality, we will focus on the case $\delta=1$, $\alpha \beta<0$ so that the fold $q^+=(0,0)$ is visible from above and $q^-=(\mu,0)$ is invisible from below (as in \figref{visibleInvisible}).

Since we perform a local analysis, we restrict attention to $x$ and $\mu$ sufficiently small so that statements \eqsref{SigmaSl}{SigmaCr} in \propref{visibility}, about $\Sigma_{sl}$ and $\Sigma_{cr}$, apply. The advantage of the form \eqsref{normplus}{normminus} of the normalized equations is that the sliding regions $\Sigma_{sl}^{\pm}$ retain their character (stable or unstable) under parameter variation. 

For later convenience we introduce the parameter $\Omega$ defined by $$\Omega \equiv \beta \delta + \alpha.$$

To conclude this section, we state the following assumptions, which we make throughout the rest of the paper.

\begin{asu}\asulab{asu1}
$\delta\ne 0.$
\end{asu}
\begin{asu}\asulab{asu2}
$\alpha\beta \ne 0.$
\end{asu}
\begin{asu}\asulab{asu3}
$\Omega \ne 0.$
\end{asu}

\asuref{asu1} and \asuref{asu2} are the normalized form of \eqref{nondeg}, when combined with \eqsref{normplus}{normminus}. In fact we have already set $\delta = \pm 1$. The significance of \asuref{asu3} will be explained in \remref{omegasingular} below. 

\section{Analysis of the PWS system}\seclab{pws}

In this section, we analyze the planar PWS system \eqsref{normplus}{normminus}, together with \eqsref{slidingEqns}{sigma} whenever we have sliding.  We pay particular attention to singular canards, pseudo-equilibria and limit cycles that can occur in our system. The fold at $q^+=(0,0)$ is fixed, whereas the fold at $q^-=(\mu,0)$ varies with $\mu$ such that the two-fold at $\mu=0$ bifurcates. Both pseudo-equilibria and sliding sections can appear, disappear or change character depending on whether $q^{\pm}$ are visible or invisible. In addition, some of the two-folds at $\mu=0$ possess singular canards, which disappear for $\mu\ne 0$, and at least one two-fold can have a limit cycle. 

\subsection{Singular canards}\seclab{Singularcanards}

Trajectories can go from the attracting sliding region $\Sigma_{sl}^-$ to the repelling sliding region $\Sigma_{sl}^+$, or vice versa, for $\mu=0$. These trajectories, which we call \textit{singular canards} \cite{KristiansenHogan2015}, resemble canards in slow-fast systems \cite{Benoit81}. A singular canard is called a \textit{vrai} singular canard if it goes from the attracting sliding region $\Sigma_{sl}^-$ to the repelling sliding region $\Sigma_{sl}^+$ in forward time. Singular canards that go from the repelling sliding region $\Sigma_{sl}^+$ to the attracting sliding region $\Sigma_{sl}^-$ are called \textit{faux} singular canards. Singular canards can only exist for $\mu=0$ and when there is sliding in both $x<0$ and $x>0$. From \eqref{SigmaSl}, we see that singular canards can only exist for $\beta>0$. 

For the existence of singular canards in our PWS system, it is important to note that, in terms of the original time used in \eqref{slidingEqns}, the two-fold on $\Sigma_{sl}$ can be reached in finite time. A simple calculation using L'H\^opital's rule shows that on $\Sigma_{sl}$, for $\mu=0$, 
\begin{align}
 \lim_{x\rightarrow 0}\dot x= (1+\beta)^{-1} \Omega.\eqlab{xdotxeq0}
\end{align}

There is no singularity at $1+\beta=0$ since we need $\beta>0$ for sliding. So, by \asuref{asu3}, we have a finite value of $\dot x$ on $\Sigma_{sl}$ for $x\rightarrow 0$ when $\mu=0$. Hence it is possible to pass in finite time through $x=0$ (the point separating attracting and repelling sliding regions, if they exist) at a two-fold.

\begin{remark}\remlab{omegasingular}
 The case $\Omega=0$ is degenerate, since $\lim_{x\rightarrow 0}\dot x$ vanishes. Geometrically this case corresponds to the linearized trajectories of $X^{\pm}$ having the same gradient on $\Sigma_{sl}$. We shall not consider this case further (cf. \asuref{asu3} above).
  \end{remark}
  
Hence by \eqref{xdotxeq0} we conclude that singular canards exist in our PWS system. To decide whether they are {vrai} singular canards or faux singular canards, we need to consider the sign of $\dot x$ in \eqref{xdotxeq0}. 
We collect the results in the following proposition:
\begin{proposition}\proplab{canards}
Singular canards in our PWS system exist if and only if $\beta>0$. If $\Omega> 0$ ($\Omega<0$) then the singular canard is a vrai (faux) singular canard.  
\end{proposition}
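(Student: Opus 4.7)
The plan is to combine two ingredients that the excerpt has already supplied: the characterization \eqref{SigmaSl} of the sliding region from \propref{visibility}, and the finite limit \eqref{xdotxeq0} of the rescaled sliding velocity at the two-fold. The argument then splits cleanly into the two assertions of the proposition.

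First, for a singular canard one needs both an attracting piece $\Sigma_{sl}^-$ with $x<0$ and a repelling piece $\Sigma_{sl}^+$ with $x>0$ meeting at the two-fold $\mu=0$. Setting $\mu=0$ in \eqref{SigmaSl}, the sliding condition reads $\beta x^2>0$, which holds for all $x\ne 0$ exactly when $\beta>0$ and fails identically when $\beta<0$. Hence the two flanking sliding regions are present if and only if $\beta>0$, and this is clearly necessary. To see it is also sufficient, one must check that the sliding orbit actually reaches and crosses the two-fold. This is exactly the content of \eqref{xdotxeq0}: in the rescaled time of \eqref{slidingEqns2} the sliding velocity extends continuously to $x=0$ with value $(1+\beta)^{-1}\Omega$. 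With $\beta>0$ the prefactor $(1+\beta)^{-1}$ is positive and finite, and by \asuref{asu3} the value itself is nonzero, so the sliding orbit passes transversely through $x=0$ in finite time. A bona fide singular canard therefore exists.

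Second, the \emph{vrai}/\emph{faux} dichotomy is read off the sign of this limit. By definition a vrai singular canard flows from $\Sigma_{sl}^-$ into $\Sigma_{sl}^+$ in forward time, i.e.\ requires $\dot x>0$ at the two-fold, while a faux singular canard requires $\dot x<0$. Since $(1+\beta)^{-1}>0$ when $\beta>0$, the sign of $\lim_{x\to 0}\dot x$ coincides with the sign of $\Omega$. Hence $\Omega>0$ yields a vrai canard and $\Omega<0$ a faux one.

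The only delicate point is that the original sliding field \eqref{slidingEqns} has a vanishing denominator at the two-fold, so one cannot apply standard ODE existence directly. This is why the positive time rescaling leading to \eqref{slidingEqns2} is essential: after rescaling, the right-hand side extends smoothly to $x=0$ and the \emph{L'H\^opital} computation producing \eqref{xdotxeq0} gives a finite, nonzero value there. Once this regularization of the sliding flow is in place, no further blowup is needed at the PWS level, and the two assertions of the proposition follow from inspecting the sign of $(1+\beta)^{-1}\Omega$.
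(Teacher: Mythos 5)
Your overall structure is the paper's: use \eqref{SigmaSl} at $\mu=0$ (where the sliding condition reads $\beta x^2>0$) to get sliding on both sides of the two-fold exactly when $\beta>0$, then use the limit \eqref{xdotxeq0} to get finite-time passage through $x=0$ and read the vrai/faux dichotomy off the sign of $(1+\beta)^{-1}\Omega$. The $\beta>0$ characterization and the sign analysis are fine.

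However, the justification of the crucial finite-time-crossing step is attributed to the wrong system, and as written it would fail. You claim that \eqref{xdotxeq0} gives the velocity of the \emph{rescaled} sliding equation \eqref{slidingEqns2} at $x=0$, and that this rescaling is what makes the argument work. In fact \eqref{xdotxeq0} is the limit of $\dot x$ for the \emph{original} sliding field \eqref{slidingEqns}, in the original time: the quotient defining $\sigma$ in \eqref{sigma} is of the form $0/0$ at the two-fold, L'H\^opital (or direct cancellation of the factor $x$) gives $\sigma\to\beta/(\beta+1)\in(0,1)$ for $\beta>0$, and hence the original-time speed tends to $(1+\beta)^{-1}\Omega\neq 0$ by \asuref{asu3}; this is exactly why the two-fold is reached and crossed in finite (original) time. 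The rescaled system \eqref{slidingEqns2} behaves in the opposite way: the time rescaling multiplies by a factor that \emph{vanishes} at $x=0$ when $\mu=0$, so within $\Sigma_{sl}^-$ one has $\dot x=-\Omega x+\mathcal O(x^2)$, i.e.\ $x=0$ is an equilibrium of the rescaled flow and is only reached in infinite rescaled time. So if your argument is read literally -- ``the rescaled right-hand side is nonzero at $x=0$, hence transverse finite-time passage'' -- both the premise and the inference are false; the conclusion survives only because the correct statement holds for \eqref{slidingEqns}. The fix is simply to swap the roles of \eqref{slidingEqns} and \eqref{slidingEqns2} in your last two paragraphs: the ``delicate point'' is resolved in the original time parametrization (the Filippov combination itself extends continuously to the two-fold with $\sigma\to\beta/(\beta+1)$), while \eqref{slidingEqns2} is only a convenient desingularization for locating pseudo-equilibria, not the vehicle for the finite-time argument.
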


One of the main objectives of this work is to establish persistence results of these singular canards under regularization. We will focus primarily on the persistence of \textit{vrai} singular canards. 
 
The different types of two-fold, together with their flow directions and  any sliding regions are shown in \figref{betaAlphaDelta}.
Note that the visible two-folds $VV_{1,2}$ and the visible-invisible two-folds $VI_{1-3}$ exist for $\delta=1$, whereas the invisible two-folds $II_{1,2}$ exist for $\delta=-1$. 

\begin{figure}\begin{center}\includegraphics[width=.85\textwidth]{./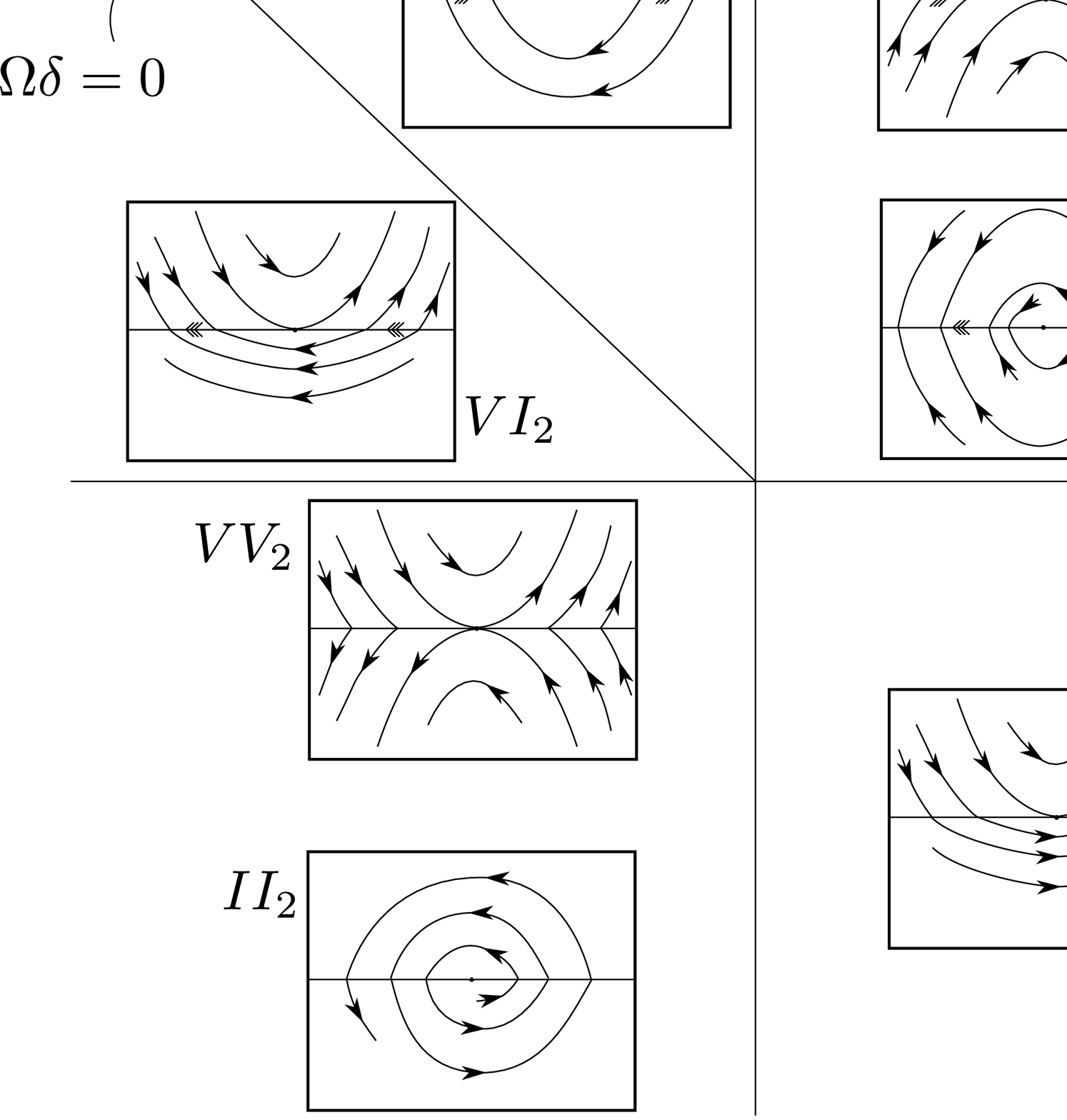}
\caption{The different types of two-fold singularities. Sliding is indicated by triple headed arrows. Cases $II_{1,2}$ occur for $\delta=-1$, the other cases for $\delta=1$.}\figlab{betaAlphaDelta}
                		\end{center}
              \end{figure}

\subsection{Pseudo-equilibria}\seclab{pseudo}

As mentioned in \secref{preliminaries}, the sliding vector field $X_{sl}(\textbf x,\mu)$ itself can have equilibria, called {\it pseudo-equilibria}\footnote{Hence there can be no pseudo-equilibria without a sliding vector field.}. These pseudo-equilibria are not necessarily equilibria of $X^{\pm}(\textbf x,\mu)$. Instead they correspond to the case when $X^{\pm}(\textbf x,\mu)$ on $\Sigma_{sl}$ are linearly dependent. Filippov (\cite{filippov1988differential}, p. 218) terms them {\it type 1} singularities. They comprise three distinct topological classes; a pseudo-node, a pseudo-saddle and a pseudo-saddle-node.
The following proposition describes the existence of pseudo-equilibria in \eqref{slidingEqns2}.
\begin{proposition}\proplab{pwsProposition}
 If
 \begin{align}
 \alpha\delta<0,\eqlab{conditionPseudo}
 \end{align}
then, for $\mu$ sufficiently small,  there exists a pseudo-equilibrium of \eqref{slidingEqns2} at $(x,y)=(x_{ps},0)$, where
 \begin{align}
  x_{ps} = \beta \delta \Omega^{-1} \mu+\mathcal O(\mu^2). \eqlab{xps}
 \end{align}
Also if $\beta \delta \Omega^{-1}\mu<0$ then $(x_{ps},0)\in \Sigma_{sl}^-$ and
\begin{itemize}
 \item for $\Omega<0$: $(x_{ps},0)$ is a pseudo-saddle with local repelling manifold coinciding with $\Sigma_{sl}^-$;
 \item for $\Omega>0$: $(x_{ps},0)$ is a attracting pseudo-node. 
\end{itemize}

\noindent
If $\beta \delta \Omega^{-1}\mu>0$ then $(x_{ps},0)\in \Sigma_{sl}^+$ and
\begin{itemize}
 \item for $\Omega<0$: $(x_{ps},0)$ is a pseudo-saddle with local attracting manifold coinciding with $\Sigma_{sl}^+$;
 \item for $\Omega>0$: $(x_{ps},0)$ is a repelling pseudo-node. 
\end{itemize}

\noindent
If $\alpha\delta>0$, then $(x_{ps},0)$ is not a pseudo-equilibrium.
\end{proposition}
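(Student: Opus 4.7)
The plan is to locate $x_{ps}$ by applying the implicit function theorem to the sliding equation \eqref{slidingEqns2}, then to read off its sliding region by sign and its type by linearization. First I would drop the common sign from \eqref{slidingEqns2} (this is just a time reversal between $\Sigma_{sl}^-$ and $\Sigma_{sl}^+$) and write the right-hand side as
\begin{align*}
F(x,\mu) = -\beta\delta(x-\mu) - \alpha x + \mathcal O((x+\mu)^2) = -\Omega\, x + \beta\delta\,\mu + \mathcal O((x+\mu)^2).
\end{align*}
Since $F(0,0)=0$ and $\partial_x F(0,0)=-\Omega\ne 0$ by \asuref{asu3}, the implicit function theorem yields a smooth branch of zeros $x=x_{ps}(\mu)$ through the origin, and expanding gives $x_{ps} = \beta\delta\Omega^{-1}\mu + \mathcal O(\mu^2)$, which is \eqref{xps}.

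Next I would check whether $(x_{ps},0)$ actually lies in the sliding set using criterion \eqref{SigmaSl}. Substituting \eqref{xps} gives $x_{ps}-\mu = -\alpha\Omega^{-1}\mu + \mathcal O(\mu^2)$, so
\begin{align*}
\beta\, x_{ps}(x_{ps}-\mu) = -\beta^2\alpha\delta\,\Omega^{-2}\mu^2 + \mathcal O(\mu^3),
\end{align*}
which is positive for small $\mu\ne 0$ iff $\alpha\delta<0$. This gives the existence condition \eqref{conditionPseudo} and the final sentence of the proposition. Whether $x_{ps}\in\Sigma_{sl}^-$ or $\Sigma_{sl}^+$ is then just the sign of $\beta\delta\Omega^{-1}\mu$, as stated.

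For the topological type I would linearize $F$ along the manifold at $x_{ps}$: in $\Sigma_{sl}^-$ we have $F'(x_{ps})=-\Omega+\mathcal O(\mu)$, while in $\Sigma_{sl}^+$ the sign flip from \eqref{slidingEqns2} gives linearization $\Omega+\mathcal O(\mu)$. Combining the resulting tangential (in)stability with the known transverse character of the sliding region (attracting on $\Sigma_{sl}^-$, repelling on $\Sigma_{sl}^+$) produces the four cases: on $\Sigma_{sl}^-$, $\Omega>0$ gives attraction both along and transverse to $\Sigma$, i.e.\ an attracting pseudo-node, whereas $\Omega<0$ gives repulsion along $\Sigma_{sl}^-$ with attraction transverse to it, i.e.\ a pseudo-saddle whose repelling manifold coincides with $\Sigma_{sl}^-$; the two cases on $\Sigma_{sl}^+$ are symmetric under the sign flip and yield the remaining assertions.

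No step is really hard: the only point requiring care is bookkeeping the two $\pm$ signs in \eqref{slidingEqns2} (the time-reparametrization sign and its effect on the sign of the linearization), and the interplay between tangential linearized stability and the transverse (in)stability of the sliding region in classifying the pseudo-equilibrium.
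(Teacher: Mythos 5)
Your proposal is correct and follows the same overall route as the paper: the implicit function theorem applied to the right-hand side of \eqref{slidingEqns2} (using $\partial_x F(0,0)=-\Omega\ne 0$, i.e. \asuref{asu3}) to produce \eqref{xps}, followed by a check that $(x_{ps},0)$ lies in the sliding set, and finally a classification by combining the one-dimensional linearization with the attracting/repelling character of $\Sigma_{sl}^{\mp}$; your sign bookkeeping for the $\pm$ in \eqref{slidingEqns2} and the resulting linearizations $\mp\Omega+\mathcal O(\mu)$ is right. The one place you genuinely diverge is the membership step: the paper splits into the cases $\beta>0$ and $\beta<0$, describes $\Sigma_{sl}$ via $\text{sign}(\mu)x\notin(0,|\mu|)$ resp. $\in(0,|\mu|)$, and reduces the question to whether $\beta\delta\Omega^{-1}=(1+\beta^{-1}\delta\alpha)^{-1}$ lies in $(0,1)$, whereas you substitute $x_{ps}$ directly into the criterion \eqref{SigmaSl} and compute $\beta x_{ps}(x_{ps}-\mu)=-\alpha\delta\,\beta^{2}\Omega^{-2}\mu^{2}+\mathcal O(\mu^{3})$, whose sign gives $\alpha\delta<0$ (and the non-existence statement for $\alpha\delta>0$) in one stroke, with no case split on $\beta$. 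This is a cleaner and slightly more economical derivation of the same condition; the paper's interval formulation has the minor side benefit of making explicit where $x_{ps}$ sits relative to the two folds $x=0$ and $x=\mu$, but nothing in the proposition requires that extra information.
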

\begin{remark}\remlab{rempsn}
 From \asuref{asu3} we do not consider pseudo-saddle-nodes in our system.
\end{remark}
\begin{proof}
 To find pseudo-equilibria, we set $\dot x=0$ in \eqref{slidingEqns2} to get
 \begin{align*}
  (-\beta\delta+\mathcal O(\mu+x))x+(-\alpha+\mathcal O(\mu+x))x-(-\beta \delta+\mathcal O(\mu))\mu=0,
 \end{align*}
or
\begin{align*}
 (1+\beta^{-1}\delta\alpha +\mathcal O(\mu+x))x= (1+\mathcal O(\mu))\mu.
\end{align*}
Here we have used that $\delta=\pm 1$. Note that $$1+\beta^{-1}\delta\alpha = \beta^{-1} \delta^{-1} \Omega \ne 0,$$ by assumption.  We can therefore solve this equation by the implicit function theorem to obtain
\begin{align*}
 x=x_{ps}\equiv \beta \delta \Omega^{-1} \mu +\mathcal O(\mu^2).
\end{align*}
This is a pseudo-equilibrium if and only if $(x_{ps},0)\in \Sigma_{sl}$. We determine $\Sigma_{sl}$ as follows. Consider first $\beta>0$. Then we have
\begin{align*}
\Sigma_{sl}:&\,x<0\quad \text{or} \quad  x>\mu\quad \text{for}\quad \mu>0,\\
&\,x=0\quad \text{for}\quad \mu=0,\\
&\,x<\mu\quad \text{or}\quad x>0\quad \text{for}\quad \mu<0.
\end{align*}
Thus $\Sigma_{sl}:\,\text{sign}(\mu)x\notin (0,\vert \mu\vert)$. Then since  
\begin{align*}
 \text{sign}(\mu)x_{ps} = \beta \delta \Omega^{-1} \vert \mu \vert +\mathcal O(\mu^2),
 \end{align*}
 we conclude that $(x_{ps},0)\in \Sigma_{sl}$ for $\mu$ sufficiently small, provided $\beta \delta \Omega^{-1}=(1+\beta^{-1}\delta \alpha )^{-1}\notin (0,1)$. Since $\beta>0$, this condition is equivalent to $\alpha\delta<0$. If, on the other hand $\beta \delta \Omega^{-1}=(1+\beta^{-1}\delta \alpha )^{-1}\in (0,1)$, then $\alpha\delta>0$, for $\beta>0$ and so $(x_{ps},0)$ is not a pseudo-equilibrium. 

Next consider $\beta<0$. Then 
\begin{align*}
\Sigma_{sl}:\,\text{sign}(\mu)x\in (0,\vert \mu \vert).
\end{align*}
Hence $(x_{ps},0)\in \Sigma_{sl}$ for $\mu\ne 0$ sufficiently small, provided $\beta \delta \Omega^{-1}=(1+\beta^{-1}\delta\alpha )^{-1}\in (0,1)$. For $\beta<0$, this is equivalent to $\alpha\delta<0$. If, on the other hand $\beta \delta \Omega^{-1}=(1+\beta^{-1}\delta \alpha )^{-1}\notin (0,1)$, then $\alpha\delta>0$, for $\beta<0$ and so $(x_{ps},0)$ is not a pseudo-equilibrium.

We conclude that $(x_{ps},0)$ is a pseudo-equilibrium, where $x_{ps}$ is defined in \eqref{xps}, provided \eqref{conditionPseudo} holds. 

If $x_{ps}<0$ then $(x_{ps},0)\in \Sigma_{sl}^{-}$, the region of stable sliding, and if $x_{ps}>0$ then $(x_{ps},0)\in \Sigma_{sl}^{+}$, the region of unstable sliding (see \propref{visibility}). Combining this with the linearization of \eqref{slidingEqns2} about $(x_{ps},0)$ gives the statements about stability.
\end{proof}
\begin{figure}\begin{center}\includegraphics[width=.90\textwidth]{./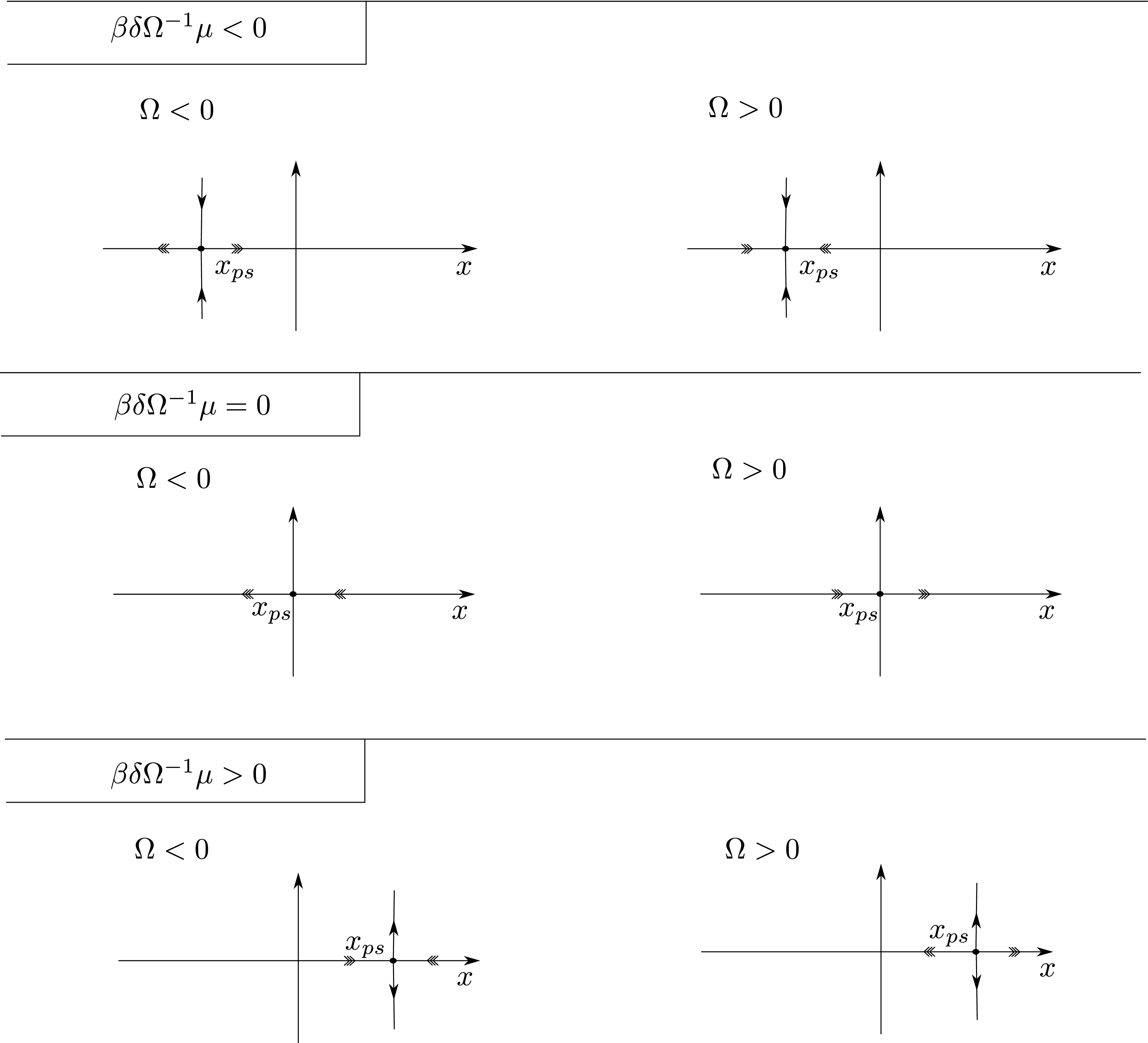}
\caption{Results of \propref{pwsProposition}. Here $\alpha\delta<0$ and $x_{ps}=\beta \delta \Omega^{-1}\mu+\mathcal O(\mu^2)$.}\figlab{prop5_2}
                		\end{center}
              \end{figure}
%

These results are summarized in \figref{prop5_2}. As mentioned earlier, two-fold singularities occur in \eqsref{normplus}{normminus} when $\mu=0$. From \propref{pwsProposition}, it follows that the two-fold singularity can be accompanied by significant changes in the nature of pseudo-equilibria around $\mu=0$. For example, if $\Omega<0$ then $x_{ps}$ is a pseudo-saddle for all $\mu\ne 0$. The difference between $\mu>0$ and $\mu<0$ is, where $\beta \delta \Omega^{-1}\mu<0$, the pseudo-saddle is in $\Sigma_{sl}^-$, which coincides with the associated repelling manifold of $x_{ps}$. But, where $\beta \delta \Omega^{-1}\mu>0$, the pseudo-saddle is in $\Sigma_{sl}^+$, which coincides with the associated attracting manifold of $x_{ps}$. Hence the attracting and repelling directions ``switch'' on passage through the two-fold at $\mu=0$. We can see this behaviour in \cite[Fig. 10]{Kuznetsov2003}, where $\Sigma_{sl}$ is the attracting manifold of the pseudo-saddle of $VV_2$ on one side of the two-fold, whereas it is the repelling manifold on the other. Similarly, when $\Omega>0$ a pseudo-equilibrium goes from being attracting on one side to repelling on the other side. We see this behaviour for example in \cite[Fig. 11]{Kuznetsov2003},  for the $VI_3$ two-fold, where a repelling pseudo-node becomes an attracting pseudo-node.

Another main objective of this paper is to understand how the behaviour of these pseudo-equilibria is modified when our governing equations are regularized.

\subsection{Limit cycles}\seclab{limitCyclesPWS}
A further phenomenon in the two-fold singularity is the existence of limit cycles. For our PWS system, it is clear from \figref{betaAlphaDelta} (see also \cite[Fig. 12]{Kuznetsov2003}) that (local) limit cycles can occur in the invisible case $II_2$ where:
\begin{align}
\delta = -1,\quad \alpha>0,\quad \beta<0.\eqlab{II2cond}
\end{align}
To study periodic orbits in this case, one can introduce a Poincar\'e map $P_0$ which takes $\{(x,0)\vert x>0\}$ into $\{(x,0)\vert x>\mu\}$ under the forward flow of $X^+$ and $X^-$ (see \figref{crossingPOS} and \cite[p. 236]{filippov1988differential}). The map $P_0$ is composed of $\sigma_0^+:\,x_0\mapsto x_1$, the mapping from $(x_0,0),\,x_0>0$, to $(x_1(x_0),0),\,x_1(x_0)<0$, under the forward flow of $X^+$, and $\sigma_0^-:x_1\mapsto x_2$, the mapping from $(x_1,0),\,x_1<\mu$, to $(x_2(x_1),0),\,x_2(x_1)>\mu$, under the forward flow of $X^-$. Clearly $P_0$ is only defined for those $x_0$ for which $x_1<\mu$. For $X^-$ to map $x_1<\mu$ into $x_2>\mu$ we need $\dot x=\alpha +\mathcal O(x+y+\mu)>0$. Therefore $\alpha>0$.
\begin{figure}
\begin{center}
\includegraphics[width=.60\textwidth]{./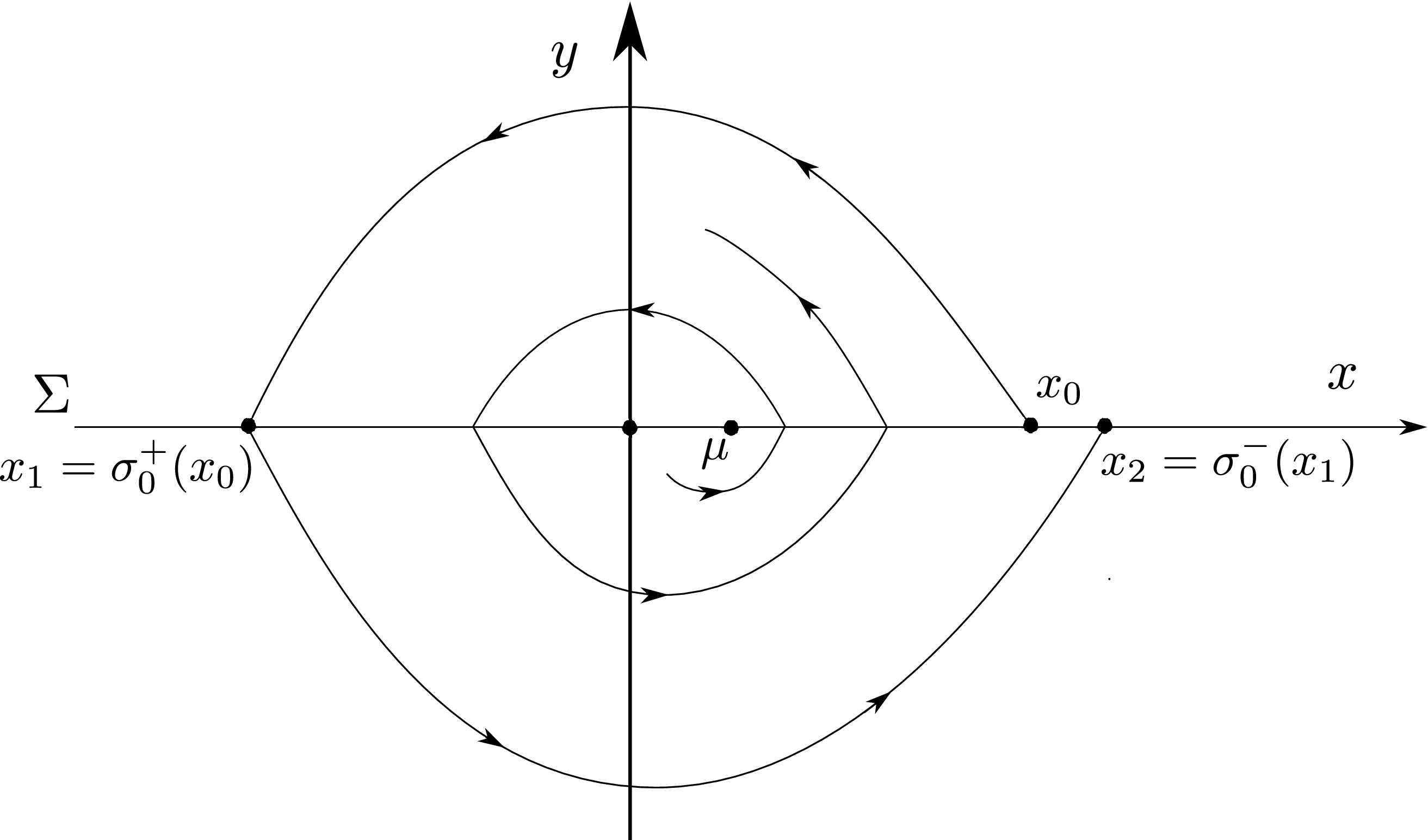}
\caption{The Poincar\'e map $P_0=\sigma_0^-\circ \sigma_0^+$ associated with the invisible two-fold singularity $II_2$, where $\delta=-1$, $\alpha>0$ and $\beta<0$.}\figlab{crossingPOS}
\end{center}
\end{figure}
Hence we have the following lemma.
\begin{lemma}\lemmalab{sigmaMaps}
Consider \eqref{II2cond}. Then
 \begin{align*}
  \sigma_0^+(x_0) &= -x_0+A^+x_0^2+\mathcal O(x_0(\mu+x_0^2)),\\
  \sigma_0^-(x_1) &= -x_1+2\mu +A^-x_1^2+\mathcal O(x_1(\mu + x_1^2)),
 \end{align*}
 where
 \begin{align}
  A^- &= \frac{2}{3\alpha \beta }(\alpha \eta^-+\beta ( \zeta^-+\chi^-)),\eqlab{AN}\\
  A^+ &= -\frac{2}{3}(\eta^++\zeta^++\chi^+).\nonumber
   \end{align}

\end{lemma}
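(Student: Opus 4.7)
In both half-planes, $\dot x$ is bounded away from zero (equal to $-1+\mathcal O(|x|+|y|+|\mu|)$ for $y>0$ and to $\alpha+\mathcal O(|x|+|y|+|\mu|)$ for $y<0$, with $\alpha>0$), so trajectories can be parametrized by $x$ rather than $t$. The plan is therefore to write $dy/dx$ as a ratio of the right-hand sides of \eqref{normplus2} (resp.\ \eqref{normminus2}), expand to the order demanded by the claimed error term, integrate from the initial point on $\Sigma$, and then apply the implicit function theorem to locate the next zero of $y$.

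For $\sigma_0^+$, I would take the quotient and expand
\begin{align*}
\frac{dy}{dx} \;=\; \frac{x+\eta^+x^2+\chi^+ y+\mathcal O(xy+\mu(x+y)+x^3)}{-1+\zeta^+ x+\mathcal O(x^2+y+\mu)} \;=\; -x-(\eta^++\zeta^+)x^2-\chi^+ y + R(x,y,\mu),
\end{align*}
with $R = \mathcal O(xy+x^3+\mu(x+y))$. Integrating with $y(x_0)=0$ gives $y(x)=\tfrac12(x_0^2-x^2) + \mathcal O(x_0^3)$ to leading order, and then iterating once (substituting the leading $y$ into the $-\chi^+ y$ and $R$ terms and integrating the explicit polynomial part) gives
\begin{align*}
y(x) \;=\; \tfrac12(x_0^2-x^2) + \tfrac13(\eta^++\zeta^+)(x_0^3-x^3) + \chi^+\!\!\int_{x_0}^{x}\!\!\tfrac12(s^2-x_0^2)\,ds + \mathcal O\bigl(x_0(\mu+x_0^2)^2\bigr).
\end{align*}
Writing $\sigma_0^+(x_0) = -x_0 + A^+ x_0^2 + \mathcal O(x_0(\mu+x_0^2))$ and imposing $y(\sigma_0^+(x_0))=0$, the $\mathcal O(x_0^2)$ terms cancel, and collecting coefficients at order $x_0^3$ (using $\int_{x_0}^{-x_0}(s^2-x_0^2)\,ds = -\tfrac43 x_0^3$) yields exactly $A^+ = -\tfrac23(\eta^++\zeta^++\chi^+)$. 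The implicit function theorem applied to $y(x_1(x_0))=0$ near $x_1=-x_0$ makes the expansion rigorous and gives the claimed remainder.

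For $\sigma_0^-$, I would carry out the same program after the shift $u = x-\mu$, which centres the expansion at the invisible fold $q^-=(\mu,0)$ and converts the vector field to
\begin{align*}
\frac{dy}{du} \;=\; -\tfrac{\beta}{\alpha}u + \tfrac{\eta^-}{\alpha}u^2 + \tfrac{\chi^-}{\alpha}y + \tfrac{\beta \zeta^-}{\alpha^2} u(u+\mu) + \mathcal O(\mu y + u y + u^3).
\end{align*}
With $u_1 = x_1-\mu<0$, the leading solution is $y(u) = -\tfrac{\beta}{2\alpha}(u^2-u_1^2)$, and the same one-step iteration together with $\int_{u_1}^{-u_1}(s^2-u_1^2)\,ds = -\tfrac43 u_1^3$ produces $u_2 = -u_1 + A_u u_1^2 + \mathcal O(u_1^3)$ with
\[
A_u \;=\; \tfrac{2\eta^-}{3\beta} + \tfrac{2(\chi^-+\zeta^-)}{3\alpha} \;=\; \tfrac{2}{3\alpha\beta}\bigl(\alpha\eta^-+\beta(\zeta^-+\chi^-)\bigr).
\]
Translating back via $x_2 = u_2+\mu$ gives $\sigma_0^-(x_1) = 2\mu - x_1 + A^-(x_1-\mu)^2 + \mathcal O(x_1(\mu+x_1^2))$, and since $(x_1-\mu)^2 = x_1^2 + \mathcal O(\mu x_1)$, the $\mathcal O(\mu x_1)$ discrepancy is absorbed into the stated remainder, yielding the lemma.

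\textbf{Main obstacle.} The calculation itself is routine; the delicate points are (i) justifying that higher-order remainders in \eqref{normplus2}--\eqref{normminus2} contribute only to the error $\mathcal O(x_0(\mu+x_0^2))$ after one Picard-style iteration -- this follows because every term in $R$ carries an extra factor of $x$, $y$, or $\mu$ relative to the leading $-x$ (resp.\ $-\tfrac\beta\alpha u$), so one integration produces the required order -- and (ii) verifying the quadratic-tangency transversality condition that ensures the implicit function theorem applies at the returning zero of $y$, which is automatic from the leading-order expression $\partial_x y(x)|_{x=-x_0} = x_0 \neq 0$ (resp.\ $\partial_u y(u)|_{u=-u_1} = \tfrac\beta\alpha u_1 \neq 0$).
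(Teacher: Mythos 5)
The paper does not actually carry out this computation: its proof of the lemma is a one-line citation to Filippov (p.~236). Your self-contained derivation -- parametrize each half-orbit by $x$ (legitimate since $\dot x=-1+\mathcal O(\cdot)$ above and $\alpha+\mathcal O(\cdot)$ below), perform one Picard-type iteration on $dy/dx$ obtained from \eqref{normplus2}--\eqref{normminus2}, and locate the return to $y=0$ by the implicit function theorem at the quadratic tangency -- is therefore a perfectly good and more explicit route to the same result. The structure of the argument, the bookkeeping of the remainders, the shift $u=x-\mu$ for $\sigma_0^-$, and the transversality check $\partial_x y\vert_{x=-x_0}=x_0+\mathcal O(x_0^2)\neq 0$ are all sound and do yield the lemma.

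There is, however, a concrete sign error in the one piece of arithmetic you display, and as written your derivation is inconsistent with your (correct) conclusion. For $x_0>0$ one has $\int_{x_0}^{-x_0}(s^2-x_0^2)\,ds=+\tfrac43 x_0^3$, not $-\tfrac43 x_0^3$ (and likewise $\int_{u_1}^{-u_1}(s^2-u_1^2)\,ds=+\tfrac43 u_1^3$ for $u_1<0$). Inserting the values you state into your own expression for the once-iterated $y(x)$ makes the $\chi^+$ contribution at order $x_0^3$ equal to $-\tfrac23\chi^+x_0^3$ instead of $+\tfrac23\chi^+x_0^3$, which would give $A^+=-\tfrac23(\eta^++\zeta^+-\chi^+)$, and the sign of $\chi^-$ in $A^-$ flips in the same way; neither agrees with \eqref{AN}. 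With the corrected integral the order-$x_0^3$ balance reads $A^++\tfrac23(\eta^++\zeta^+)+\tfrac23\chi^+=0$, and in the lower half-plane $\tfrac{\beta}{\alpha}A_u=\tfrac23\bigl(\tfrac{\eta^-}{\alpha}+\tfrac{\beta\zeta^-}{\alpha^2}\bigr)+\tfrac{2\beta\chi^-}{3\alpha^2}$, which give exactly the stated $A^{\pm}$, so the fix is purely local. One further small point: $(x_1-\mu)^2=x_1^2-2\mu x_1+\mu^2$, so absorbing the discrepancy into $\mathcal O(x_1(\mu+x_1^2))$ tacitly uses $\mu=\mathcal O(x_1)$; this holds in the regime where the lemma is applied ($|x_1|\sim\sqrt{|\mu|}$ in \propref{propLimitCyclesPWS}), but deserves a word.
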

\begin{proof}
See \cite[p. 236]{filippov1988differential}. 
\end{proof}

The Poincar\'e mapping $P_0=\sigma_0^-\circ \sigma_0^+$ therefore takes the following form:
 \begin{align}
  P_0(x_0) = \sigma_0^-(\sigma_0^+(x_0))=x_0+2\mu+(A^--A^+)x_0^2+\mathcal O(x_0^3+\mu x_0),\eqlab{P0Map}
 \end{align}
 for those $x_0$ which satisfy the inequality:
 \begin{align}
 \sigma_0^+(x_0) = -x_0+A^+x_0^2+\mathcal O(x_0^3+x_0\mu)<\mu.\eqlab{x1LeMu}
\end{align}
\begin{proposition}\proplab{propLimitCyclesPWS}
Consider \eqref{II2cond} and suppose that 
\begin{align}
\Delta_{II_2} \equiv A^--A^+=\frac{2}{3\alpha \beta}\left(\alpha (\eta^-+\beta \eta^+)+\beta(\zeta^-+\chi^-+\alpha (\zeta^++\chi^+))\right),\eqlab{Ceqn}
\end{align}
is non-zero. 
Then, for $\mu \Delta_{II_2}^{-1}<0$ sufficiently small, the PWS system has a family of periodic orbits. These periodic orbits correspond to fixed points of $P_0$ of the following form
\begin{align}
 x_0(\mu) = \sqrt{-2\mu \Delta_{II_2}^{-1}}+\mathcal O(\mu).\eqlab{x0mu}
\end{align}
The periodic orbits are attracting for $\Delta_{II_2}<0$ and repelling for $\Delta_{II_2}>0$.  
\end{proposition}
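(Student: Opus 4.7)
The plan is to find fixed points of $P_0$ by applying the implicit function theorem after a rescaling that desingularizes the double-zero structure at $(\mu,x_0)=(0,0)$, then read off stability from $P_0'$ at the fixed point and finally verify that the resulting $x_0$ does lie in the domain of $P_0$.

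First, from \eqref{P0Map} the fixed-point equation $P_0(x_0)=x_0$ reads
\begin{equation*}
F(x_0,\mu)\equiv 2\mu+\Delta_{II_2}x_0^2+\mathcal O(x_0^3+\mu x_0)=0.
\end{equation*}
Since both $\mu$ and $x_0^2$ contribute to the leading behaviour, direct application of the implicit function theorem fails; the expected scaling $x_0=\mathcal O(\sqrt{|\mu|})$ motivates introducing $\epsilon=\sqrt{|\mu|}$ and $u=x_0/\epsilon$. With $\mu=\operatorname{sgn}(\mu)\epsilon^2$ and after division by $\epsilon^2$, the fixed-point equation becomes
\begin{equation*}
G(u,\epsilon)\equiv 2\operatorname{sgn}(\mu)+\Delta_{II_2}u^2+\mathcal O(\epsilon)=0,
\end{equation*}
where $G$ is smooth in $(u,\epsilon)$ because the remainder in \eqref{P0Map} is $\mathcal O(x_0^3+\mu x_0)=\mathcal O(\epsilon^3(1+u))$ (the $\mu x_0$ term contributes $\operatorname{sgn}(\mu)\epsilon^3 u$, etc.).

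Second, at $\epsilon=0$ the equation $G(u,0)=0$ has real solutions iff $-2\operatorname{sgn}(\mu)/\Delta_{II_2}>0$, i.e.\ iff $\mu\Delta_{II_2}^{-1}<0$, in which case $u_0=\sqrt{-2\operatorname{sgn}(\mu)/\Delta_{II_2}}>0$ is a simple root with $\partial_u G(u_0,0)=2\Delta_{II_2}u_0\ne 0$. The implicit function theorem then yields a unique smooth branch $u(\epsilon)=u_0+\mathcal O(\epsilon)$, which on undoing the scaling gives
\begin{equation*}
x_0(\mu)=\epsilon u(\epsilon)=\sqrt{-2\mu\Delta_{II_2}^{-1}}+\mathcal O(\mu),
\end{equation*}
exactly \eqref{x0mu}. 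Stability follows by computing $P_0'(x_0)=1+2\Delta_{II_2}x_0+\mathcal O(x_0^2+\mu)$. Substituting the leading-order expression for $x_0(\mu)$ gives $P_0'(x_0)=1+2\Delta_{II_2}\sqrt{-2\mu\Delta_{II_2}^{-1}}+\mathcal O(\mu)$, whose distance from $1$ has the sign of $\Delta_{II_2}$; hence the fixed point is attracting for $\Delta_{II_2}<0$ and repelling for $\Delta_{II_2}>0$, which translates into the same stability for the associated periodic orbit.

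The main obstacle, and the step I would check most carefully, is verifying that $x_0(\mu)$ belongs to the domain of $P_0$, i.e.\ that the admissibility condition \eqref{x1LeMu} holds at the candidate fixed point. Since $x_0(\mu)\sim\sqrt{|\mu|}\gg|\mu|$ as $\mu\to 0$, one has $\sigma_0^+(x_0(\mu))=-x_0(\mu)+\mathcal O(\mu)$, so \eqref{x1LeMu} reduces for small $\mu$ to $-\sqrt{|\mu|}(1+o(1))<\mu$, which is immediate for $\mu>0$ and reads $\sqrt{|\mu|}(1+o(1))>|\mu|$ for $\mu<0$, again trivially true. This confirms that the fixed point really corresponds to a concatenation $X^+$-arc followed by $X^-$-arc, so $x_0(\mu)$ indeed yields a closed orbit of the PWS system, completing the proof.
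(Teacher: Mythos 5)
Your proposal is correct and follows essentially the same route as the paper: solve the fixed-point equation $P_0(x_0)=x_0$ via the implicit function theorem, verify the admissibility condition \eqref{x1LeMu}, and read off stability from $P_0'(x_0(\mu))=1+2\Delta_{II_2}x_0(\mu)+\mathcal O(\mu)$. The only difference is that you spell out the rescaling $x_0=\epsilon u$, $\mu=\operatorname{sgn}(\mu)\epsilon^2$ that desingularizes the degenerate point $(x_0,\mu)=(0,0)$ before applying the implicit function theorem, a detail the paper leaves implicit.
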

\begin{proof}
 We obtain \eqref{x0mu} by solving the equation $P_0(x_0)=x_0$ for $x_0>0$ using the implicit function theorem. For $\mu$ small, $x_0(\mu)$ is positive and satisfies the inequality \eqref{x1LeMu}. For stability we compute the derivative of \eqref{P0Map} to get:
 \begin{align*}
  P_0'(x_0(\mu)) &= 
  1+2\Delta_{II_2}x_0(\mu)+\mathcal O(\mu).
 \end{align*}
  If $\Delta_{II_2}<0$ ($\Delta_{II_2}>0$) then $0<P'(0)<1$ ($P'(0)>1$) for $\mu$ sufficiently small. The result follows.
\end{proof}

{Finally in this subsection, we note that the visible-invisible case $VI_3$ does not appear to have limit cycles. However, by straightening out the flow within $\Sigma^-$, so that $\Sigma$ becomes curved and quadratic at the fold, the $VI_3$ case clearly resembles the classical slow-fast curved critical manifold with \textit{singular cycles}. This similarity can be seen in \figref{VI3vdP} where we illustrate (a) the $VI_3$ two-fold and (b) the slow-fast equivalent as it appears, for example, in the van der Pol system (see also \cite[Fig. 5]{krupa_relaxation_2001}). 
There are two singular canard cycles in \figref{VI3vdP} (b), shown as thick curves, each composed of fast segments (with triple-headed arrows) and slow segments on the curved slow manifold. For $\epsilon$ sufficiently small, this slow-fast system is known to produce a \textit{canard explosion phenomenon} \cite{krupa_relaxation_2001}, in which the singular cycles become limits of a family of rapidly increasing periodic orbits as $\epsilon\rightarrow 0$. However, when considering \figref{VI3vdP}, it is important to highlight that there is no time scale separation in the PWS system. Nevertheless, we shall see that the \textit{regularized} PWS system possesses a hidden slow-fast structure near the discontinuity set. This will allow us identify the \textit{singular cycles} in \figref{VI3vdP} (a) as limits of periodic orbits of the regularization, and hence strengthen the connection between (a) and (b) further. The singular cycles in \figref{VI3vdP} (a) are also illustrated as thick curves, but in comparison to the cycles in (b), they are composed of an orbit segment of $X^-$, within $\Sigma^-$, and a sliding segment on $\Sigma$. }

Another objective of this paper to understand the existence of limit cycles under regularization. Further details are presented in \secref{limitCycles}. 

\begin{figure}
\begin{center}
\includegraphics[width=.80\textwidth]{./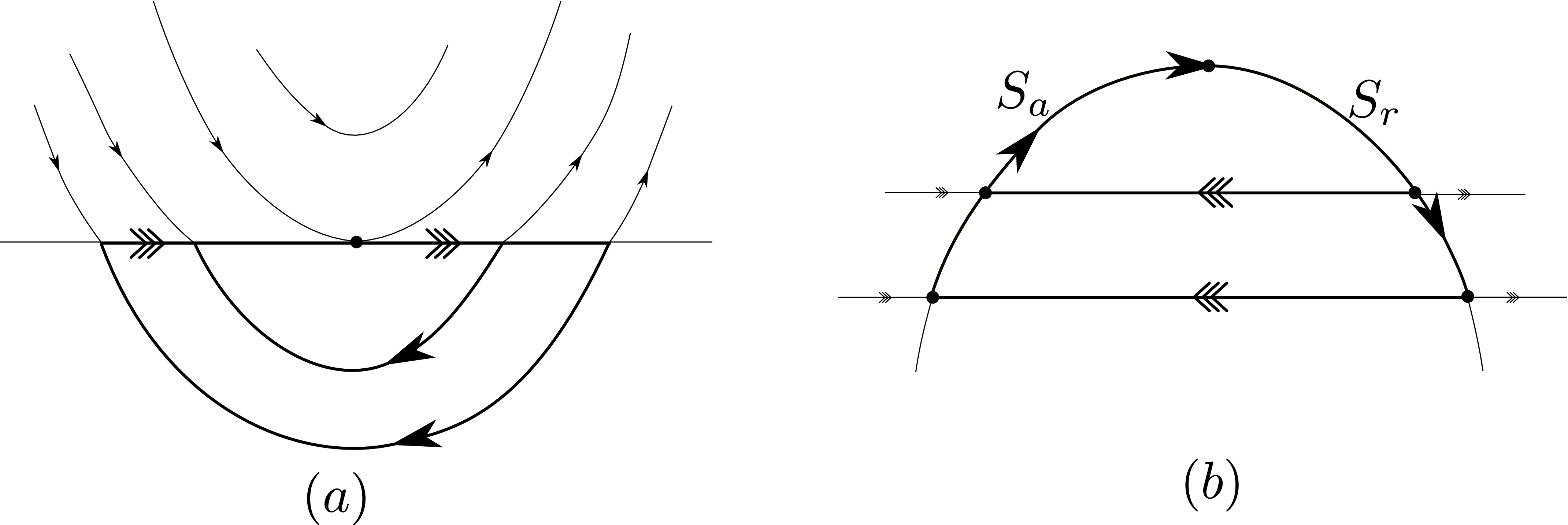}
\caption{In (a): the $VI3$ PWS two-fold bifurcation and in (b): the classical curved critical manifold e.g. appearing in the slow-fast van der Pol system. In (b) $S_a$ and $S_r$ denote the attracting and repelling part of the critical manifold. Note also the different use of triple-headed arrows. In (a) triple-headed arrows are used to indicate sliding whereas they in (b) are used to indicate fast orbit segments described by a set of layer equations. The thick curves illustrate \textit{singular} closed curves which for case (a) will be become periodic orbits upon regularization.}\figlab{VI3vdP}
\end{center}
\end{figure}

\subsection{Summary of two-fold properties}\seclab{summary}
We conclude this section with \tabref{tblPWS} which summarizes properties of the seven two-folds, for future reference.\\

\begin{center}
\begin{longtable}{| c | c | c | c | c | c | c | c |}
\caption{1: type of two-fold singularity [visible (VV), visible-invisible (VI) and invisible (II), following \cite{Kuznetsov2003}]. 2: value of $\delta$. 3: sign of $\alpha$. 4: sign of $\beta$. 5: sign of $\Omega$. 6: type of singular canard (x=no canard). 7: type of pseudo-equilibrium after bifurcation (PS=pseudo-saddle, PN=pseudo-node, x=no pseudo-equilibrium). 8: possibility of limit cycle.}\tablab{tblPWS} \\
\hline
1 & 2 & 3 & 4 & 5 & 6 & 7 & 8\\
\hline
 \hline
Two-fold type & $\delta$ & $\alpha$ & $\beta$ & $\Omega$ & Singular canard & Pseudo-equilibrium & Limit cycle \\
\hline
\hline
$VV_1$ & 1 & + & + & + & {vrai} & x & no\\
\hline
$VV_2$ & 1 & - & - & - & x & PS & no\\
\hline
$VI_1$ & 1 & + & - & $\pm$ & x & x & no\\
\hline
$VI_2$ & 1 & - & + & - & faux & PS & no\\
\hline
$VI_3$ & 1 & - & + & + & {vrai} & PN & -\footnote{The regularized two-fold $VI_3$ \textit{does} possess a limit cycle. See Theorem~\ref{mainLimitCycles}. }\\
\hline
$II_1$ & -1 & - & + & - & faux & x & no\\
\hline
$II_2$ & -1 & + & - & + & x & PN & yes\\
 \hline
 \end{longtable}
\end{center}

\section{Regularization}\seclab{regularize}
It is natural to ask how the results in \secref{pws} are affected by regularization. For example, there is the question of the persistence of the singular canards (\secref{Singularcanards}). Then, of course, pseudo-equilibria (\secref{pseudo}) can not exist in a smooth system, so we will need to look at the existence and behaviour of equilibria in the regularization. Finally there is the need to understand both the fate of the limit cycles (\secref{limitCyclesPWS}) in the invisible two-fold $II_2$ under regularization and the possibility of other limit cycles which may appear in the regularized system.

There is a number of ways that the original PWS vector field $X=(X^-,X^+)$ can be regularized. We follow the approach of Sotomayor and Teixeira \cite{Sotomayor96}. We define a $C^k$-function ($1\le k\le \infty$) $\phi(s)$ which satisfies:
\begin{eqnarray}
 \phi(s)=\left\{\begin{array}{cc}
                 1 & \text{for}\quad s\ge 1,\\
                 \in (-1,1)& \text{for}\quad s\in (-1,1),\\
                 -1 & \text{for}\quad s\le -1,\\
                \end{array}\right.\eqlab{phiFunc}
 \end{eqnarray}
 where 
 \begin{align}
 \phi'(s)>0 \quad \text{within}\quad s\in (-1,1).\eqlab{phiProperties}
 \end{align}
 Then for $\epsilon>0$, the regularized vector-field $X_\epsilon (\textbf x,\mu)$ is then given by
\begin{eqnarray}
 X_\epsilon (\textbf x,\mu) =
 \frac12 X^+(\textbf x,\mu)(1+\phi(\epsilon^{-1} y)) +\frac12 X^-(\textbf x,\mu)(1-\phi(\epsilon^{-1}y)).\eqlab{Xeps}
\end{eqnarray}
Note that 
\begin{align}
X_\epsilon(\textbf x,\mu)=X^{\pm}(\textbf x,\mu) \quad \text{for}\quad  y\gtrless \pm \epsilon.\eqlab{SotomayorProperty}
\end{align}
The region $y\in (-\epsilon,\epsilon)$ is the \textit{region of regularization}. The system \eqref{Xeps} has a hidden slow-fast stucture which is unfolded by the re-scaling (see also \cite{reves_regularization_2014}) 
\begin{align}
\hat y=\epsilon^{-1} y.\eqlab{yTilde}
\end{align}
In terms of this re-scaling the region of regularization becomes $\hat y \in (-1,1)$. 
Then, using \eqsref{normplus}{normminus}, the regularized system \eqref{Xeps} becomes
\begin{eqnarray}
\dot{x} &=&\frac{1}{2} \left ( (\delta + \mathcal O(x+\mu+\epsilon \hat y))(1+\phi(\hat y))+(\alpha  + \mathcal O(x+\mu+\epsilon \hat y))(1-\phi(\hat y))\right ),\eqlab{slowslowFastEquations1}\\
\epsilon\dot{\hat y} &=&\frac{1}{2} \left ( (x+\mathcal O(\mu x+x^2+\epsilon \hat y))(1+\phi(\hat y))+(-\beta (x-\mu)+\mathcal O(\mu (x-\mu)+(x-\mu)^2+\epsilon \hat y))(1-\phi(\hat y))\right ).\nonumber
 \end{eqnarray}
This is the \textit{slow system} of a slow-fast system. The $\hat y$ variable is fast with $\mathcal O(\epsilon^{-1})$ velocities and $x$ is the slow variable with $\mathcal O(1)$ velocities.  The limit $\epsilon=0$:
  \begin{eqnarray}
\dot{x} & = &\frac{1}{2} \left ( (\delta + \mathcal O(x+\mu))(1+\phi(\hat y))+(\alpha  + \mathcal O(x+\mu))(1-\phi(\hat y))\right ),\eqlab{ReducedProblemY2}\\
0 & = & \frac{1}{2} \left ( (x+\mathcal O(\mu x+x^2))(1+\phi(\hat y))+(-\beta (x-\mu)+\mathcal O(\mu (x-\mu)+(x-\mu)^2))(1-\phi(\hat y))\right ). \nonumber
 \end{eqnarray}
is called the \textit{reduced problem}. 
When we re-scale time according to $\tau = \frac{1}{2\epsilon} t$, \eqref{slowslowFastEquations1} becomes:
\begin{eqnarray}
x' 
&=&\epsilon \left ( (\delta + \mathcal O(x+\mu+\epsilon \hat y))(1+\phi(\hat y))+(\alpha  + \mathcal O(x+\mu+\epsilon \hat y))(1-\phi(\hat y))\right ),\eqlab{slowFastEquations1}\\
\hat y'
 &=&(x+\mathcal O(\mu x+x^2+\epsilon \hat y))(1+\phi(\hat y))+(-\beta (x-\mu)+\mathcal O(\mu (x-\mu)+(x-\mu)^2+\epsilon \hat y))(1-\phi(\hat y)),\nonumber
 \end{eqnarray}
with $()'=\frac{d}{d\tau}$. This is the \textit{fast system} of a slow-fast system. The limit $\epsilon=0$:
  \begin{eqnarray}
x' & = & 0, \eqlab{LayerProblemY2}\\
\hat y'&=&(x+\mathcal O(\mu x+x^2))(1+\phi(\hat y))+(-\beta (x-\mu)+\mathcal O(\mu (x-\mu)+(x-\mu)^2))(1-\phi(\hat y)). \nonumber
 \end{eqnarray}
is called the \textit{layer problem}. Time $\tau$ is the \textit{fast time} and time $t$ is the \textit{slow time}. 
\begin{remark}\remlab{hatyRemark}
{The re-scaling used in \eqref{yTilde} is necessary to identify the slow-fast structure hidden in \eqref{Xeps}. It is not due to loss of hyperbolicity and so the use of \eqref{yTilde} is different from the scaling used later in \secref{canards} in connection with the blowup of a nonhyperbolic line. We use \eqref{yTilde} to cover $\hat y\in (-1,1)$. Outside this region we will use \eqref{SotomayorProperty} and the exact PWS analysis to describe the dynamics.

In \cite{Llibre07,llibre_sliding_2008,Llibre09}, the re-scaling \eqref{yTilde} is replaced by a transformation $y=r\bar y,\,\epsilon =r\bar \epsilon$, $(\bar y,\bar \epsilon)\in S^1$, making it possible to illustrate, in one diagram, the dynamics of $X^{\pm}$ outside the region of regularization and the slow-fast dynamics within (by inserting the cylinder $(x,(\bar y,\bar \epsilon))\in \mathbb R\times S^1$ at $y=0$). However it does nothing to address any loss of hyperbolicity that may occur.

}
\end{remark}

In this paper, we apply and extend Fenichel theory \cite{fen1, fen2, fen3, jones_1995} of singular perturbations to study these regularized equations \eqref{slowslowFastEquations1}-\eqref{LayerProblemY2}, allowing us to go from a description of the singular limit $\epsilon=0$ to a description for $\epsilon>0$. This approach has the advantage that it is geometric so, for example, we are able to solve persistence problems by invoking transversality. It will also be important to identify the singular limit $\epsilon=0$ with the original PWS system.

The key to the subsequent analysis in this paper is the following result ({a related result is given in \cite[Theorem 1.1]{llibre_sliding_2008}, in a slightly different form}):
{
\begin{theorem}\thmlab{criticalManifold}
 There exist critical manifolds $S_{a,r}$ and $\tilde q$, given by
 \begin{align}
 S_a&:\quad \frac{1-\phi(\hat y)}{1+\phi(\hat y)}=\frac{x+\mathcal O(x^2+\mu x)}{\beta (x-\mu) +\mathcal O(\mu(x-\mu)+(x-\mu)^2)},\,(x,0)\in {\Sigma}_{sl}^-,\eqlab{Sary2}\\
 S_r&:\quad \frac{1-\phi(\hat y)}{1+\phi(\hat y)}=\frac{x+\mathcal O(x^2+\mu x)}{\beta (x-\mu) +\mathcal O(\mu(x-\mu)+(x-\mu)^2)},\,(x,0)\in {\Sigma}_{sl}^+,\nonumber\\
 \tilde q&:\quad x=0,\,\mu=0,\,\hat y\in (-1,1),\eqlab{qtilde}
 \end{align}
which are normally attracting, normally repelling and normally non-hyperbolic, respectively. On the critical manifolds $S_{a,r}$, the motion of the slow variable $x$ is described by the reduced problem \eqref{ReducedProblemY2}, which coincides with the sliding equations \eqsref{slidingEqns}{sigma}. 
\end{theorem}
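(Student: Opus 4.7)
The plan is to solve the right-hand side of the layer problem \eqref{LayerProblemY2} for $\hat y$ via the implicit function theorem, then read off normal hyperbolicity from the $\partial_{\hat y}$-derivative of that right-hand side, and finally match $(1+\phi(\hat y^*))/2$ with the Filippov coefficient $\sigma$ from \eqref{lambdaSliding} to identify the reduced flow with the sliding vector field.

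Write the right-hand side of \eqref{LayerProblemY2} as
\begin{align*}
G(x,\hat y,\mu) := (1+\phi(\hat y))\,X_2^+(x,0,\mu) + (1-\phi(\hat y))\,X_2^-(x,0,\mu),
\end{align*}
noting that the two bracketed expressions in \eqref{LayerProblemY2} are exactly $X_2^\pm(x,0,\mu)$ produced by the normalized forms \eqref{normplus}--\eqref{normminus} evaluated at $y=0$. The equation $G=0$ rearranges to $(1-\phi)/(1+\phi) = -X_2^+/X_2^-$, and by \eqref{phiProperties} the left-hand side is a smooth, strictly decreasing bijection of $\hat y\in(-1,1)$ onto $(0,\infty)$. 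Hence there is a unique $\hat y=\hat y^*(x,\mu)\in(-1,1)$ solving $G=0$ precisely when $-X_2^+/X_2^->0$, i.e. when $X_2^+X_2^-<0$; by the definition of sliding in \secref{preliminaries} this is exactly $(x,0)\in\Sigma_{sl}$. The implicit function theorem then produces the smooth graphs $S_{a,r}$ parametrised over $\Sigma_{sl}^\mp$ in the form \eqref{Sary2}. At $(x,\mu)=(0,0)$ both $X_2^\pm$ vanish simultaneously, so $G(0,\hat y,0)\equiv 0$ for every $\hat y\in(-1,1)$; this produces the line $\tilde q$ in \eqref{qtilde}.

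For normal hyperbolicity, differentiate to get $\partial_{\hat y}G = \phi'(\hat y)(X_2^+-X_2^-)$. Since $\phi'>0$ on $(-1,1)$ and $X_2^+,X_2^-$ have opposite signs on the critical manifold, $\partial_{\hat y}G\ne 0$ except where $X_2^+=X_2^-=0$, which by \eqref{normplus}--\eqref{normminus} together with \asuref{asu2} occurs only along $\tilde q$. On $\Sigma_{sl}^-$ stable sliding forces $X_2^+<0<X_2^-$, so $\partial_{\hat y}G<0$ and $S_a$ is normally attracting; on $\Sigma_{sl}^+$ the signs reverse and $S_r$ is normally repelling. For the reduced flow, set $\sigma^*:=(1+\phi(\hat y^*))/2$; solving $G=0$ yields
\begin{align*}
\sigma^* = \frac{X_2^-(x,0,\mu)}{X_2^-(x,0,\mu)-X_2^+(x,0,\mu)},
\end{align*}
which is exactly the expression \eqref{lambdaSliding}. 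Substituting $\sigma^*$ into \eqref{ReducedProblemY2} returns $\dot x=\sigma^*X_1^+(x,0,\mu)+(1-\sigma^*)X_1^-(x,0,\mu)$, which is \eqref{slidingEqns}.

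The main obstacle is essentially bookkeeping: verifying that the implicit function theorem applies uniformly on compact subsets of $\Sigma_{sl}^\mp$ bounded away from the two-fold, which follows because the leading-order expansion of $\partial_{\hat y}G$ on the critical manifold is $\phi'(\hat y^*)\bigl((1+\beta)x-\beta\mu+\mathcal O(x^2+\mu x)\bigr)$, and a direct case check from \propref{visibility} shows that the bracket never vanishes on $\Sigma_{sl}$ for $(x,\mu)$ small with $(x,\mu)\ne(0,0)$. The higher-order $\mathcal O(\epsilon\hat y)$ terms in \eqref{slowslowFastEquations1} play no role here, since $S_{a,r}$ and $\tilde q$ are defined at $\epsilon=0$.
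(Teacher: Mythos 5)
Your proposal is correct and follows essentially the same route as the paper, which leaves the argument as "simple computations": you locate the critical manifolds as zeros of the layer-problem right-hand side, read normal hyperbolicity off the single transverse eigenvalue $\partial_{\hat y}G=\phi'(\hat y)(X_2^+-X_2^-)$, and identify the reduced flow with the Filippov sliding field via $\sigma=(1+\phi(\hat y^*))/2$. The only difference is that you spell out the monotonicity/sign bookkeeping (via $w$ and the sliding sign conditions) that the paper omits.
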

\begin{proof}
This follows from simple computations. Recall that the critical manifolds are fixed points of the layer equations \eqref{LayerProblemY2}. Each manifold is normally hyperbolic if the linearization has one non-zero eigenvalue. Otherwise it is non-hyperbolic. 
\end{proof}
\begin{remark}
 The non-hyperbolic line $\tilde q$ of the regularized system corresponds to the two-fold $q$ of the PWS system. 
 \end{remark}
 
 As in our previous paper \cite{KristiansenHogan2015} it is useful to introduce the following function $w=w(\hat y)$:
\begin{align}
w(\hat y)=\frac{1-\phi(\hat y)}{1+\phi(\hat y)},\eqlab{weqn}
\end{align}
which appears on the left hand side of \eqref{Sary2}, since this reduces the complexity of subsequent expressions. 
For $\hat y\in (-1,1)$ we have $w \in (0,\infty)$. Also
 \begin{align}
  w'(\hat y)= -\frac{2\phi'(\hat y)}{(1+\phi(\hat y))^2}<0,\eqlab{wPrime}
 \end{align}
within $\hat y\in (-1,1)$, and the critical manifolds $S_{a,r}$ are therefore graphs of $\hat y$ over $\Sigma_{sl}^{\mp}$:
\begin{align*}
 S_a&:\quad \hat y=w^{-1}\left(\frac{x+\mathcal O(x^2+\mu x)}{\beta (x-\mu) +\mathcal O(\mu(x-\mu)+(x-\mu)^2)}\right),\,(x,0)\in {\Sigma}_{sl}^-,\\
 S_r&:\quad \hat y=w^{-1}\left(\frac{x+\mathcal O(x^2+\mu x)}{\beta (x-\mu) +\mathcal O(\mu(x-\mu)+(x-\mu)^2)}\right),\,(x,0)\in {\Sigma}_{sl}^+,\nonumber
 \end{align*}
 where $$w^{-1}(z) = \phi^{-1}\left(\frac{1-z}{1+z}\right),\quad z\in (0,\infty)$$ is the inverse of $w$.
%
}

In the sequel, we will also need the \textit{extended problem}, which is given by
\begin{align}
x' &=\epsilon \left ( (\delta + \zeta^+ x+\mathcal O(x^2+\mu+\epsilon \hat y))(1+\phi(\hat y))+(\alpha  + \zeta^- x + \mathcal O(x^2+\mu+\epsilon \hat y))(1-\phi(\hat y))\right ),\eqlab{ExtendedProblem1}\\
\hat y' &=(x+\eta^+ x^2 +\epsilon \chi^- \hat y + \mathcal O(\epsilon x\hat y+\mu(x+\epsilon \hat y)+x^3))(1+\phi(\hat y))\nonumber\\
&+(-\beta (x-\mu)+\eta^- (x-\mu)^2 + \epsilon \chi^- \hat y+\mathcal O(\epsilon x\hat y+\mu \epsilon \hat y + \mu(x-\mu)+(x-\mu)^3))(1-\phi(\hat y)),\nonumber\\
\epsilon' & =  0,\nonumber\\
\mu' & = 0.\nonumber
 \end{align}
 Here we have used \eqsref{normplus2}{normminus2}. 
 
 Finally, when studying canards in \secref{canards}, it will also be useful to introduce a new time $\tilde \tau$, defined by
 \begin{align}
  d\tilde \tau = (1+\phi(\hat y))d\tau,\eqlab{tildeTau} 
 \end{align}
so that our extended problem \eqref{ExtendedProblem1}, now only defined within $\hat y\in (-1,1)$, becomes:
\begin{align}
x' &=\epsilon \left (\delta + \zeta^+ x+\mathcal O(x^2+\mu+\epsilon \hat y)+(\alpha  + \zeta^- x + \mathcal O(x^2+\mu+\epsilon \hat y))w(\hat y)\right ),\eqlab{ExtendedProblem1W}\\
\hat y' &=x+\eta^+ x^2 +\epsilon \chi^- \hat y + \mathcal O(\epsilon x\hat y+\mu(x+\epsilon \hat y)+x^3)\nonumber\\
&+(-\beta (x-\mu)+\eta^- (x-\mu)^2 + \chi^- y+\mathcal O(\epsilon x\hat y+\mu \epsilon \hat y + \mu(x-\mu)+(x-\mu)^3))w(\hat y),\nonumber\\
\epsilon' & =  0,\nonumber\\
\mu' & = 0.\nonumber
 \end{align}
 where, by abuse of notation, we take $()'=\frac{d}{d\tilde \tau}$ in \secref{canards} only. Note that $w(\hat y)$ appears on the right hand sides of equations \eqref{ExtendedProblem1W}. 

\begin{remark}\remlab{S0vsSigmaSl}
 If we return to our original $y$ variable using \eqref{yTilde} then the critical manifolds $S_{a,r}$ become graphs $y=\epsilon h(x,\mu)$ which, for $\epsilon=0$, collapse to $\overline{\Sigma}_{sl}$.
\end{remark}
{
\begin{remark} 
The critical manifolds $S_{a,r}$ and $\tilde q$ are shown in \figref{Sar} for $\beta>0$. It is clear that a singular canard exists for $\mu=0$ only.
\end{remark}
}
\begin{figure}\begin{center}\includegraphics[width=.65\textwidth]{./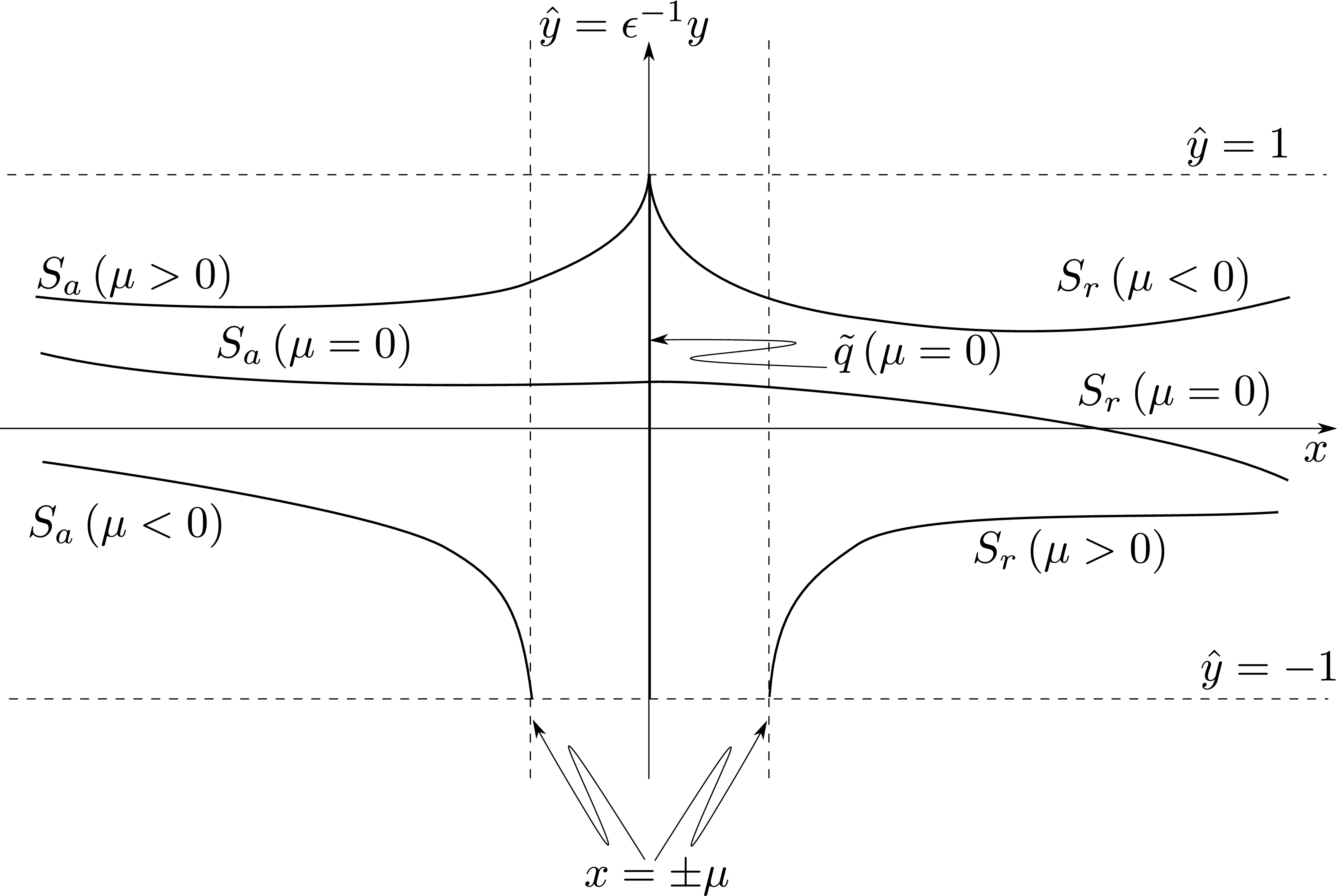}
\caption{The critical manifolds $S_{a,r}$ for different values of $\mu$ together with the non-hyperbolic line $\tilde q$ for $\beta>0$. {The critical manifolds agree with the sliding regions of the PWS smooth system. In particular, stable (unstable) sliding becomes an attracting (repelling) critical manifold of the regularization carrying slow dynamics described by the sliding equations.}}\figlab{Sar}
                		\end{center}
              \end{figure}

\subsection{Fenichel theory}
Consider compact sets $I^\pm$ contained within $\Sigma_{sl}^{\pm}$, respectively. Then according to Fenichel theory  \cite{fen1, fen2, fen3, jones_1995}, there exist slow manifolds of \eqref{Xeps} of the form
\begin{align}
 S_{a,\epsilon}:\quad \hat y = w^{-1}\left(\frac{x+\mathcal O(x^2+\mu x)}{\beta (x-\mu) +\mathcal O(\mu(x-\mu)+(x-\mu)^2)}\right)+\mathcal O(\epsilon),\quad (x,0)\in I^-\subset \Sigma_{sl}^-,\eqlab{Saeps}\\
 S_{r,\epsilon}:\quad \hat y =w^{-1}\left( \frac{x+\mathcal O(x^2+\mu x)}{\beta (x-\mu) +\mathcal O(\mu(x-\mu)+(x-\mu)^2)}\right)+\mathcal O(\epsilon),\quad (x,0)\in I^+\subset  \Sigma_{sl}^+.\eqlab{Sreps}
\end{align}
The flow on $S_{a,\epsilon}$ and $S_{r,\epsilon}$ is therefore $\mathcal O(\epsilon)$-close to the flow of the sliding equations. 
 The potential non-uniqueness of $S_{a,\epsilon}$ and $S_{r,\epsilon}$ only manifests itself in $\mathcal O(e^{-c/\epsilon})$ small deviations. Henceforth we shall fix copies of $S_{a,\epsilon}$ and $S_{r,\epsilon}$. 

\subsection{Loss of hyperbolicity}\seclab{loss}
From \thmref{criticalManifold}, it follows that the analysis in \secref{pws} can be directly applied to singular canards on the critical manifold for the limiting regularized system. 
In contrast, a (maximal) {canard} of the regularized system \eqref{slowFastEquations1} appears as an intersection of the extension by the flow of the Fenichel slow manifolds $S_{a,\epsilon}$ and $S_{r,\epsilon}$ to a vicinity of the non-hyperbolic line $\tilde q$. A canard of the regularized system is called a \textit{vrai} (\textit{faux}) canard if it goes from $S_{a,\epsilon}$ ($S_{r,\epsilon}$) to $S_{r,\epsilon}$ ($S_{a,\epsilon}$) in forward time. 
\begin{remark}\remlab{vraiCanards}
 {Our focus is mainly on \textit{vrai} canards, usually referred to as canards \cite{krupa_extending_2001,szmolyan_canards_2001}, a convention we shall adopt.}
\end{remark}
%

To study the extensions of $S_{a,\epsilon}$ and $S_{r,\epsilon}$ near $\tilde q$, we will use the blowup method of Dumortier and Roussarie \cite{dumortier_1991, dumortier_1993, dumortier_1996}, in the formulation of Krupa and Szmolyan \cite{krupa_extending_2001}. 

Consider the following sections:
\begin{align}
\Lambda^\mp :&\quad x=\mp \rho,\eqlab{Lambdapm}
\end{align}
where $\rho$ is a small positive constant. Then Fenichel's slow manifolds $S_{a,\epsilon}$ and $S_{r,\epsilon}$ intersect $\Lambda^{\mp}$, respectively, in
\begin{align}
\hat y&= w^{-1}(\beta^{-1}) + \mathcal O_{a,r}(\rho+\mu +\epsilon),\eqlab{SarIntersection}
 \end{align}
from \eqsref{Saeps}{Sreps}. This will become useful later when extending $S_{a,\epsilon}$ and $S_{r,\epsilon}$.
 
We are also interested in studying the regularization of pseudo-equilibria and limit cycles. The non-hyperbolicity of the line $\tilde q$ defined in \eqref{qtilde} will again complicate the analysis and we will also use the blowup method to obtain an accurate and complete description of the regularization of these phenomena.

\section{Main results}\seclab{mainResult}
In this section, we anticipate our main regularization results, in a form convenient for the reader. The connection between sections of the paper devoted to PWS results and to regularization results is shown in \tabref{tblMR}:
\begin{table}[h!]
\caption{Connection between phenomena in the PWS and regularized systems}\tablab{tblMR}
\centering
\begin{tabular}{| c | c |}
 \hline
PWS phenomenon & Regularized phenomenon \\
\hline
\hline
Singular canards (\secref{Singularcanards}) & Canards (\secref{canards})\\
\hline
Pseudo-equilibria (\secref{pseudo}) & Equilibria (\secref{equilibriaregular})\\
\hline
Limit cycles (\secref{limitCyclesPWS}) & Limit cycles (\secref{limitCycles})\\
\hline
\end{tabular}
\end{table}

\subsection{Canards}
From \secref{Singularcanards}, singular canards of the PWS system can only exist for $\mu=0$. From \propref{canards}, singular canards exist for 
\begin{align}
  \beta>0\quad \text{and}\quad \Omega>0.\eqlab{canardAssumption}
 \end{align}
Canards of the regularized system \eqref{Xeps} are considered in \secref{canards}. The main result of that section is that singular canards survive the regularization in the following sense:

\textsc{theorem} \ref{canardTheorem}
\textit{Assuming \eqref{canardAssumption}, the regularized system \eqref{Xeps}  has a maximal canard at $$\mu =\sqrt{\epsilon} \mu_{2,c}(\sqrt{\epsilon})=\mathcal O(\epsilon),$$ where $\mu_{2,c}$ is given by \eqref{muc}, which is $\mathcal O(\sqrt{\epsilon})$-close to the singular canard.}
\begin{remark}\remlab{maximal}
{Canards are non-unique because the slow manifolds are non-unique. But often in the literature, \textit{any} solution following $S_{r,\epsilon}$ for an $O(1)$-distance is called a canard. Hence the canard in Theorem (\ref{canardTheorem}), which is obtained geometrically as the transverse intersection of fixed copies of $S_{a,\epsilon}$ and $S_{r,\epsilon}$, is referred to as the \textit{maximal canard}.}
\end{remark}

\subsection{Equilibria}
From \secref{pseudo}, pseudo-equilibria of the PWS system can only exist for
\begin{align}
 \alpha\delta<0.\eqlab{eqAssymption}
\end{align}
Equilibria of the regularized system \eqref{Xeps} are considered in \secref{equilibriaregular}. We have two main results. Our first result is Theorem~\ref{mainHopf}, which shows that, for $\Omega<0$, the equilibria are saddles and, for $\Omega>0$, there is a Hopf bifurcation at $\mu=\mathcal O(\sqrt{\epsilon})$, after which the equilibria become foci, and then nodes, as $\mu$ varies. 

\textsc{theorem} \ref{mainHopf}
\textit{Assuming \eqref{eqAssymption}, the regularized system  \eqref{Xeps} has a smooth and locally unique family of equilibria
\begin{align*}
(x,y)=(x^*,y^*)(\mu,\sqrt{\epsilon}) \equiv \left(\frac{\mu}{1+\beta^{-1} \delta \alpha}+\mathcal O(\mu^2+\epsilon),\mathcal O(\epsilon)\right)
\end{align*}
where $(x^*(\mu,0),y^*(\mu,0))=(x^*(\mu,0),0)$ agrees with the family of pseudo-equilibria for the PWS system (see \propref{pwsProposition}) and, in particular, $\partial_\mu x^*(\mu,0)\ne 0$. For
\begin{itemize}
 \item[$\Omega<0$:] The family of equilibria consists of saddles and does not undergo any bifurcation. 
 \item[$\Omega>0$:] The family of equilibria undergoes a Hopf bifurcation at $\mu=\sqrt{\epsilon}\mu_{2,H}(\sqrt{\epsilon})=\mathcal O(\epsilon)$, where $\mu_{2,H}$ is given by \eqref{mu2H}. The first Lyapunov coefficient is given by $a = a_2 \sqrt{\epsilon}+\mathcal O(\epsilon)$
where $a_2$, given by \eqref{K2}, depends upon the regularization function $\phi$.
 \end{itemize}
}

Our second result for equilibria of the regularized system \eqref{Xeps}, Theorem~\ref{K2sign}, is perhaps surprising, in that it shows that the criticality of the Hopf bifurcation depends on the regularization function $\phi$.

\textsc{theorem} \ref{K2sign}
 \textit{Suppose that $\Omega>0$ so that, from Theorem~\ref{mainHopf}, there exists a Hopf bifurcation. Then for $\epsilon$ sufficiently small, provided
  \begin{align}
\delta (\zeta^-+\chi^-)-\alpha (\zeta^++\chi^+)\ne 0, \tag*{\eqref{assumption1}}
  \end{align}
  where $\zeta^\mp, \chi^\mp$ are the higher order coefficients in \eqref{zetaeta}, the first Lyapunov coefficient $a$ can be positive, zero or negative, depending on the regularization function $\phi$.}

\subsection{Limit cycles}
From \secref{limitCyclesPWS}, limit cycles of the PWS system can exist for the case $II_2$, which occurs when 
\begin{align}
 \delta = -1,\quad \alpha>0,\quad \beta<0.\eqlab{limitCyclePWSAssumption}
 \end{align}
Limit cycles of the regularized system \eqref{Xeps} are considered in \secref{limitCycles} where, following Theorem~\ref{mainHopf} above, we find limit cycles provided:
\begin{align}
\Omega>0\quad  \text{and}\quad \alpha\delta<0.\eqlab{limitCycleAssumption}
\end{align}
This leads to the regularization of $II_2$ and of $VI_3$, which we denote by $II_2^\epsilon$ and $VI_3^\epsilon$, respectively. We have two main results for limit cycles of the regularized system. The first main result Theorem~\ref{mainLimitCycles} shows how small amplitude periodic orbits due the Hopf bifurcation in Theorem~\ref{mainHopf} can be connected to $\mathcal O(1)$ (with respect to $\epsilon$) amplitude periodic orbits.

\textsc{theorem} \ref{mainLimitCycles} 
\textit{For $\epsilon$ sufficiently small:
 \begin{itemize}
  \item[$II_2^\epsilon$:] There exists a $C^k$-smooth family of locally unique periodic orbits of the regularized system \eqref{Xeps} that is due to the Hopf bifurcation in Theorem~\ref{mainHopf}. If $a_2< 0$ ($a_2>0$) where $a_2$ is the first Lyapunov coefficient as defined in \eqref{K2}, then the periodic orbits are attracting (repelling) near the Hopf bifurcation. If 
  \begin{align}
  \Delta_{II_2}=\frac{2}{3\alpha \beta} \left(\alpha (\eta^-+\beta \eta^+)+\beta(\zeta^-+\chi^-+\alpha (\zeta^++\chi^+))\right) \tag*{\eqref{Ceqn}},
  \end{align} is negative (positive) then the periodic orbits for $\Delta_{II_2}^{-1} \mu \le - c\sqrt{\epsilon}$, $c>0$ sufficiently large, are attracting (repelling).
  The periodic orbits for $\mu=\mathcal O(1)$ (with respect to $\epsilon$) are continuously $\mathcal O(\epsilon)$-close to the PWS periodic orbits in \propref{propLimitCyclesPWS}.
  \item[$VI_3^\epsilon$:] There exists a $C^k$-smooth family of \textnormal{small} periodic orbits of the regularized system \eqref{Xeps} that is due to the Hopf bifurcation in Theorem~\ref{mainHopf}. There also exists a $C^k$-smooth family of periodic orbits that are $\mathcal O(1)$ (with respect to $\epsilon$) in amplitude and which undergo a canard explosion, where the amplitude changes by $\mathcal O(1)$ within an exponentially small parameter regime around the canard value $\mu=\sqrt{\epsilon}\mu_{2,c}(\sqrt{\epsilon})$ (see Theorem~\ref{canardTheorem}). If $a_2< 0$ ($a_2>0$) where $a_2$ is the first Lyapunov coefficient as defined in \eqref{K2}, then the periodic orbits are attracting (repelling) near the Hopf bifurcation. If 
 \begin{align}
 \Delta_{VI_3}=\frac{2}{3\alpha \beta}\left(\alpha (\eta^-+ \beta \eta^+)+\beta(\beta+1)(\zeta^-+\chi^-)\right) \tag*{\eqref{Beqn}}
 \end{align} 
 is negative (positive) then the $\mathcal O(1)$-periodic orbits are attracting (repelling).
 \end{itemize}
}


We conjecture on the connection of the two families of periodic orbits in $VI_3^\epsilon$:

\textsc{Conjecture} \ref{mainConjecture1}
\textit{The two families of periodic orbits in $VI_3^\epsilon$  belong to the same family of locally unique periodic orbits.}

The second main result for limit cycles of the regularized system \eqref{Xeps} shows how an open set of regularization functions can induce at least one saddle-node bifurcation in the periodic orbits of Theorem~\ref{mainLimitCycles}. 

\textsc{theorem} \ref{saddleNode}
 \textit{Suppose \eqref{assumption1} and \eqref{limitCycleAssumption}. Then for $\epsilon$ sufficiently small:
  \begin{itemize}
   \item[$II_2^\epsilon$:] There exists an open set of regularization functions such that the periodic orbits in Theorem~\ref{mainLimitCycles}, case $II_2^\epsilon$, undergo at least one saddle-node bifurcation.
   \item[$VI_3^\epsilon$:] Suppose, in addition, that Conjecture~\ref{mainConjecture1} holds. Then there exists an open set of regularization functions such that the periodic orbits in Theorem~\ref{mainLimitCycles}, case $VI_3^\epsilon$, undergo at least one saddle-node bifurcation.
  \end{itemize}
}

In \secref{numerics}, we illustrate the results in Theorem~\ref{saddleNode} by applying two different regularization functions to two model systems of $II_2^\epsilon$ and $VI_3^\epsilon$. The regularization functions are such that each case, $II_2^\epsilon$ or $VI_3^\epsilon$, will have a saddle-node bifurcation for only one of the regularization functions.

\section{On the existence of canards}\seclab{canards}
In \secref{Singularcanards}, \propref{canards}, we showed that our PWS system \eqsref{normplus}{normminus} together with the sliding vector field \eqsref{slidingEqns}{sigma} contains singular canards for $\beta>0$. In this section, we aim to discover the fate of these singular canards in the regularized system. {To do this, we focus on dynamics in the region of regularization $y\in (-\epsilon,\epsilon)$, or $\hat y\in (-1,1)$, as described by equations \eqref{ExtendedProblem1W}, which are written in terms of the new time $\tilde \tau$, defined in \eqref{tildeTau}. We work with $\tilde \tau$ in this section only.}

As discussed in \secref{loss}, Fenichel theory breaks down on the non hyperbolic line $\tilde q$, defined in \eqref{qtilde}. We use the blowup method \cite{dumortier_1991,dumortier_1993,dumortier_1996} to deal with this line. We introduce the \textit{quasi-homogeneous blowup}, given by $$(x,\epsilon,\mu)=(r^{a_1} \overline x,r^{a_2} \overline \epsilon,r^{a_3}\overline{\mu}),$$ where the number $r$ is called the \textit{exceptional divisor}. By this transformation, the line $\tilde q$ is blown up to a cylinder $$S^2\times (-1,1) = \{((\bar x, \bar \epsilon, \bar \mu),\hat y) | \bar x^2 + \bar \epsilon^2 + \bar \mu^2 =1,\,\hat y\in (-1,1)\}.$$ When $r=0$, the blown-up coordinates collapse to the non-hyperbolic line $\tilde q$. 

The weights $(a_1,a_2,a_3)$ are chosen so that the vector field written as a function of the blowup coordinates has a power of the exceptional divisor as a common factor. By transforming time using this common factor, it is then possible to remove the exceptional divisor and so de-trivialize the vector-field on $\tilde q$. By substituting the quasi-homogeneous blowup into \eqref{ExtendedProblem1W} and removing the exceptional divisor, it turns out that $(a_1,a_2,a_3)=(1,2,1)$. So we have the following blowup of $\tilde q$: $$x=r\bar x,\,\epsilon = r^2 \bar \epsilon,\,\mu=r \bar \mu$$ with $r\ge 0$, $(\bar x,\bar \epsilon,\bar \mu)\in S^2$. Note that this blowup does not depend on $\hat y$. The new phase space is therefore $$((\bar x,\bar \epsilon,\bar \mu),r,\hat y)\in S^2\times \overline{\mathbb R}_+\times (-1,1).$$

To describe the dynamics on the blowup space we consider the following charts:
 \begin{align}
  \text{chart}\quad &\kappa_1:\,\bar x=-1,\, x=-r_1,\,\epsilon = r_1^2 \epsilon_1,\,\mu = r_1 \mu_1\eqlab{eqChartKappa1}\\
  \text{chart}\quad &\kappa_2:\,\bar \epsilon=1,\, x=r_2 x_2,\,\epsilon = r_2^2,\,\mu = r_2 \mu_2,\eqlab{eqChartKappa2}\\
  \text{chart}\quad &\kappa_3:\,\bar x=1,\quad x=r_3,\, \,\epsilon = r_3^2 \epsilon_3,\,\mu = r_3 \mu_3.\eqlab{eqChartKappa3}
 \end{align}
The chart $\kappa_2$ is called the scaling chart or {\it family rescaling} chart \cite{dumortier_1991,dumortier_1993,dumortier_1996}. The charts $\kappa_{1,3}$ are called {\it phase directional} charts. The point $(x,\epsilon,\mu)=(0,0,0)$ has been blown up into the planes $r_i=0 : i=1,2,3$. 
{The two-sphere $S^2$ and the charts $\kappa_{1-3}$ are shown in \figref{s2AndCharts}.} We adopt the convention that the subscript $n$ of each  quantity is used when we are working in chart $\kappa_n$.
\begin{figure}\begin{center}\includegraphics[width=.65\textwidth]{./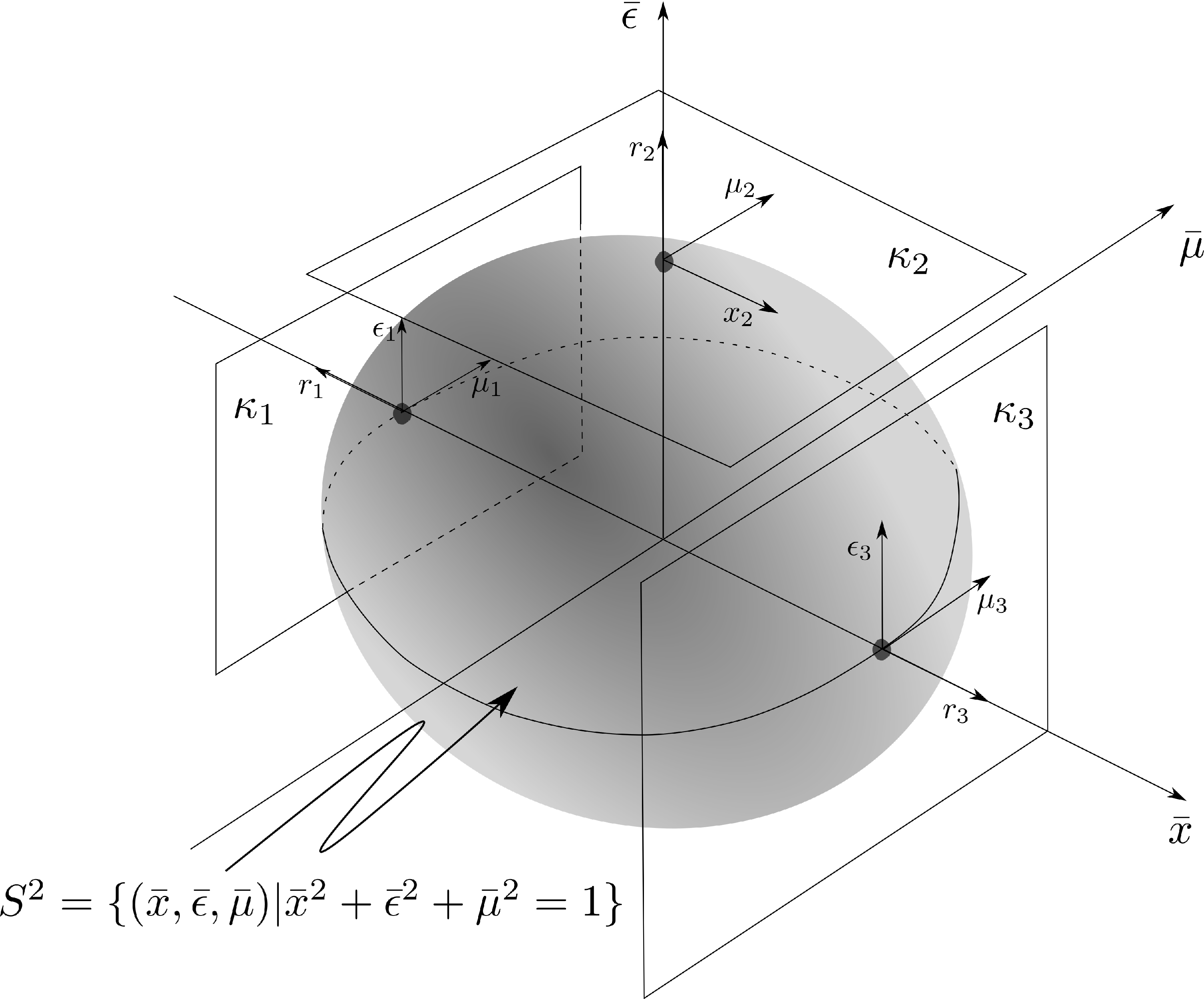}
\caption{The two-sphere $S^2$ and the charts $\kappa_{1-3}$.}\figlab{s2AndCharts}
                		\end{center}
              \end{figure}



The following coordinate change between charts $\kappa_1$ and $\kappa_2$ will be important in what follows:
\begin{eqnarray}
\kappa_{21}&:(r_1,\hat y,\epsilon_1,\mu_1)\mapsto (x_2,\hat y,\mu_2,r_2) =&(-\epsilon_1^{-1/2},\,\hat y,\,{\epsilon_1}^{-1/2}\mu_1,\,r_1 \sqrt{\epsilon_1}),\eqlab{kappa12}\\
\kappa_{12}&:(x_2,\hat y,r_2,\mu_2)\mapsto (r_1,\hat y,\epsilon_1,\mu_1)=&(-r_2x_2,\,\hat y,\,x_2^{-2},-x_2^{-1}\mu_2),\eqlab{kappa21}
\end{eqnarray}
defined for $x_2<0$ and $\epsilon_1>0$ respectively. 
The change between charts $\kappa_2$ and $\kappa_3$ is given by:
\begin{eqnarray}
\kappa_{32}&:(x_2,\hat y,r_2,\mu_2)\mapsto (r_3,\hat y,\epsilon_3,\mu_3)=&(r_2x_2,\,\hat y,\,x_2^{-2},x_2^{-1}\mu_2),\eqlab{kappa23}\\
\kappa_{23}&:(r_3,\hat y,\epsilon_3,\mu_3)\mapsto (x_2,\hat y,\mu_2,r_2) =&(\epsilon_3^{-1/2},\,\hat y,\,{\epsilon_3}^{-1/2}\mu_3,\,r_3 \sqrt{\epsilon_3}),\eqlab{kappa32}
\end{eqnarray}
defined for $x_2>0$ and $\epsilon_3>0$ respectively. 

We now describe the dynamics in each chart, beginning with the chart $\kappa_2$. \Secsref{chartKappa1}{chartKappa3} describe the dynamics in charts $\kappa_{1,3}$. Finally, \secref{combination} combines the results from charts $\kappa_{1-3}$ to prove Theorem~\ref{canardTheorem}.
\subsection{Chart $\kappa_2$}\seclab{chartKappa2}
The extended problem \eqref{ExtendedProblem1W} written in chart $\kappa_2$ becomes 
\begin{align}
 \dot x_2 &=\delta +\alpha w(\hat y)+ r_2(\zeta^+x_2+\zeta^-x_2 w(\hat y))+\mathcal O(r_2(\mu_2+r_2)),\eqlab{eqnChartK2}\\
 \dot{\hat y} &=x_2+r_2\left(\eta^+x_2^2+\chi^+ \hat y\right),\nonumber\\
 &+\left(-\beta (x_2-\mu_2)+r_2\left(\eta^-(x_2-\mu_2)^2+\chi^-\hat y\right)\right)w(\hat y)+\mathcal O(r_2(\mu_2+r_2))\big),\nonumber\\
 \dot r_2 &=0,\nonumber\\
 \dot \mu_2&=0,\nonumber
\end{align}
where we have multiplied time by $r_2$. 
For $\beta>0$ there is an invariant line $l_{2}$ given by: 
\begin{align}
l_{2}:\,x_2\in \mathbb R,\, \hat y=\hat y_{c}, \,r_2=0, \, \mu_2=0,\eqlab{l2Solution}
\end{align}
where
\begin{align}
 \hat y_{c} = w^{-1}(\beta^{-1}) = \phi^{-1}\left(\frac{\beta -1}{\beta+1}\right)\in (-1,1),\eqlab{phiy2c}
\end{align}
carrying the \textit{special} solution:
\begin{align*}
 x_2 = \beta^{-1}\Omega t_2,\, \hat y=\hat y_{c},\,r_2=0,\,\mu_2=0,
\end{align*}
where $t_2$ is the time used in \eqref{eqnChartK2}.

Consider the following sections 
\begin{align}
\Lambda_2^{\mp}=\{(x_2,\hat y,r_2,\mu_2)\vert \,x_2 = \mp \nu^{-1}\},\eqlab{Lambda2pm}
\end{align}
where $\nu$ is small and positive. The line $l_2$ intersects $\Lambda_2^{\mp}$ in 
\begin{align}
 l_2 \cap \Lambda_2^{\mp}:\, x_2 = \mp \nu^{-1},\,\hat y=\hat y_c,\,r_2=0,\,\mu_2=0.\eqlab{l2Lambdapm}
\end{align}
The line $l_2$ and the sections $\Lambda_2^{\mp}$ are shown in \figref{kappa2}.

\begin{figure}\begin{center}\includegraphics[width=.85\textwidth]{./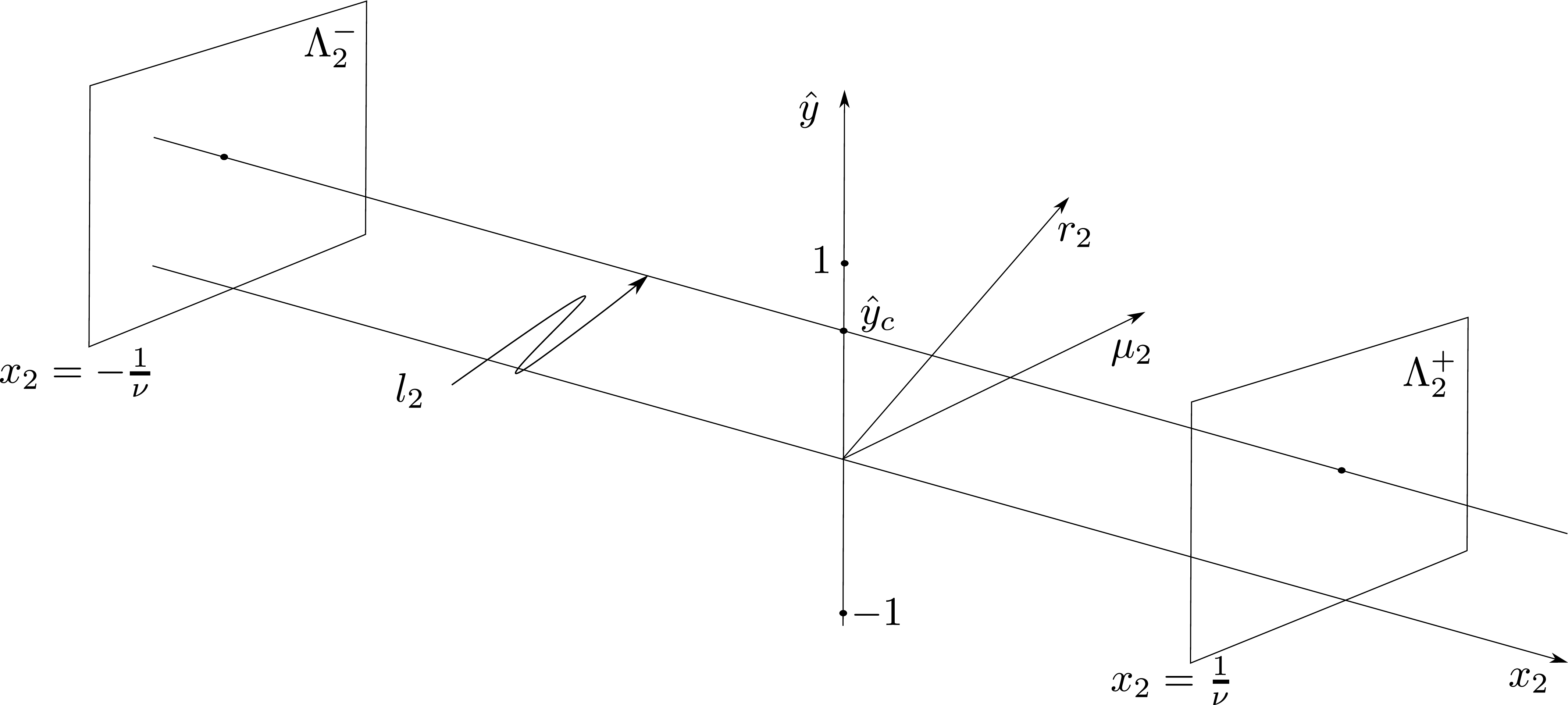}
\caption{The line $l_2$ and the sections $\Lambda_2^{\pm}$.}\figlab{kappa2}
                		\end{center}
              \end{figure}


%
%



\subsection{Chart $\kappa_1$}\seclab{chartKappa1}
We now describe the dynamics in chart $\kappa_1$ and relate them to the dynamics in chart $\kappa_2$. The extended problem \eqref{ExtendedProblem1W} written in chart $\kappa_1$ becomes
\begin{align}
\dot r_1 &=-r_1 \epsilon_1 F_1(r_1,\hat y,\epsilon_1,\mu_1),\eqlab{EqsChartKappa1}\\
\dot{\hat y} &= -1+\beta(1+ \mu_1)w(\hat y)+\mathcal O(r_1),\nonumber\\
\dot \epsilon_1 &=2\epsilon_1^2 F_1(r_1,\hat y,\epsilon_1,\mu_1),\nonumber\\
\dot \mu_1 &= \epsilon_1 \mu_1 F_1(r_1,\hat y,\epsilon_1,\mu_1),\nonumber
\end{align}
where we have divided the vector-field by $r_1$ and set $F_1(r_1,\hat y,\epsilon_1,\mu_1) = \delta+\alpha w(\hat y)+\mathcal O(r_1)$. The $\mathcal O(r_1)$-terms include the constants $\zeta^\pm,\,\eta^\pm,\,\chi^\pm$ from \eqref{zetaeta} but they will not play a role in this section and we therefore suppress them.
The line $r_1=0,\,\hat y=w^{-1}\left(\frac{1}{\beta(1+\mu_1)}\right),\,\epsilon_1=0$ is a line of fixed points, provided $\beta(1+\mu_1)>0$, since $w(\hat y)\in (0,\infty)$. In particular, this line includes the point $r_1=0,\,\hat y=\hat y_c,\,\epsilon_1=0,\,\mu_1=0$ because we focus on $\beta>0$. 
There exists an attracting center manifold $M_{a,1}$ of this line, given by:
$$M_{a,1}:\,\hat y=w^{-1}\left(\frac{1}{\beta(1+\mu_1)}\right)+\mathcal O(r_1+\epsilon_1)$$ 
and, within $r_1=0$, a center manifold $C_{a,1}$, given by: 
$$C_{a,1}:\,\hat y=w^{-1}\left(\frac{1}{\beta(1+\mu_1)}\right)+\mathcal O(\epsilon_1).$$ 
Within $C_{a,1}$, there exists an invariant line:
\begin{align*}
l_{a,1}:\, r_1=0,\,\hat y = \hat y_c,\,\epsilon_1\ge 0,\,\mu_1=0.
\end{align*}
Recall \eqref{phiy2c}. 
The center manifold $C_{a,1}$ is overflowing and hence unique near $l_{a,1}$ if $\dot \epsilon_1>0$ for $\epsilon_1>0$. From \eqref{EqsChartKappa1},  we therefore have $$\dot \epsilon_1 =2\epsilon_1^2 (\delta+\alpha w(\hat y))=2\epsilon_1^2 \beta^{-1}\Omega+\mathcal O(\epsilon_1),$$ with $r_1=0$ and $w(\hat y)=\beta^{-1}+\mathcal O(\epsilon_1)$ near $l_{a,1}$. Hence since $\beta>0$, $\dot \epsilon_1>0$ for $\epsilon_1>0$ if $\Omega>0$. For $\Omega<0$, we have $\dot \epsilon_1<0$ and so $C_{a,1}$ is non-unique near $l_{a,1}$ in this case.

The manifold $M_{a,1}$ has invariant foliations, which we denote by $M_{a,1}(\epsilon)$, with $\epsilon=r_1^2\epsilon_1 =\text{const}$. The sub-manifold $M_{a,1}(\epsilon)$ intersects the section $r_1=\rho$, corresponding to $\Lambda^-$ \eqref{Lambdapm}, in 
\begin{align*}
\hat y=w^{-1}(\beta^{-1})+\mathcal O(\rho+\mu+\epsilon)=\hat y_c+\mathcal O(\rho+\mu+\epsilon),
\end{align*}
which agrees with \eqref{SarIntersection} since $\mu=r_1\mu_1=\rho \mu_1$. Hence $M_{a,1}(\epsilon)$ is the extension of $S_{a,\epsilon}$ into chart $\kappa_1$ near the line $\tilde q$ (ignoring exponentially small terms). 

In order to relate the dynamics in chart $\kappa_1$ to the dynamics in chart $\kappa_2$, we use the coordinate change $\kappa_{21}$ in \eqref{kappa21}. The section $\Lambda_2^{-}$ defined in \eqref{Lambda2pm} then becomes 
\begin{align}
 \Lambda_1^-:\quad \epsilon_1 = \nu^2.\eqlab{Lambda2pmChartK1}
\end{align}
The manifold $C_{a,1}$ intersects $\Lambda_1^-$ in
\begin{align*}
 C_{a,1}\cap \Lambda_1^-:\,\epsilon_1=\nu^2,\,r_1=0,\hat y=\hat y_c+\mathcal O(\nu^2+\mu_1)
\end{align*}
and hence, by the conservation of $\epsilon$, we conclude that the intersection of $M_{a,1}(\epsilon)$ with $\Lambda_1^-$ is $\mathcal O(\sqrt{\epsilon})$-close to $C_{a,1}$.

The line $l_{a,1}$ and the section $\Lambda_1^-$ are shown in \figref{kappa1}.
\begin{figure}\begin{center}\includegraphics[width=.65\textwidth]{./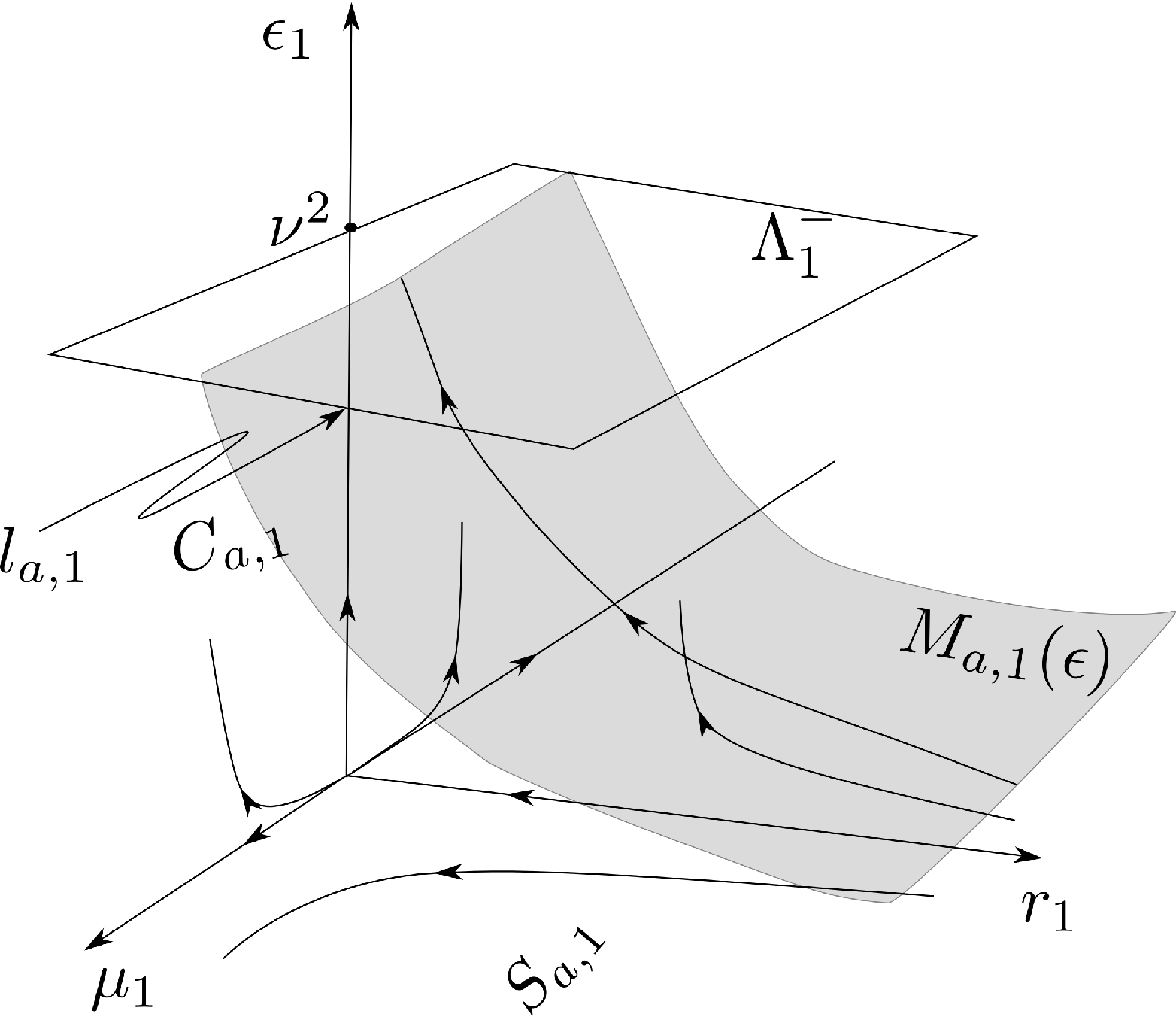}
\caption{The line $l_{a,1}$ and the section $\Lambda_1^-$.}\figlab{kappa1}
                		\end{center}
              \end{figure}

\subsection{Chart $\kappa_3$}\seclab{chartKappa3}
The extended problem \eqref{ExtendedProblem1W} written in chart $\kappa_3$ becomes
\begin{align}
 \dot r_3 &=r_3 \epsilon_3F_3(r_3,\hat y,\epsilon_3,\mu_3),\eqlab{EqsChartKappa3}\\
 \dot{\hat y} &=1-\beta(1-\mu_3)w(\hat y)+\mathcal O(r_3),\nonumber\\
 \dot \epsilon_3 &=-2\epsilon_3^2F_3(r_3,\hat y,\epsilon_3,\mu_3),\nonumber\\
 \dot \mu_3&=-\epsilon_3\mu_3 F_3(r_3,\hat y,\epsilon_3,\mu_3),\nonumber
 \end{align}
where we have divided the vector-field by $r_3$ and set $F_3(r_3,\hat y,\epsilon_3,\mu_2) = \delta+\alpha w(\hat y)+\mathcal O(r_3)$. 
The analysis of the dynamics in this chart is very similar to the analysis in chart $\kappa_1$. Therefore we state the main results only.
 
There exists a repelling center manifold $M_{r,3}$ of $r_3=0,\,\hat y=w^{-1}\left(\frac{1}{\beta(1-\mu_3)}\right),\,\epsilon_3=0$ given by
$$M_{r,3}:\,\hat y=w^{-1}\left(\frac{1}{\beta(1-\mu_3)}\right)+\mathcal O(r_3+\epsilon_3)$$
and a center manifold $C_{r,3}$ within $r_3=0$ which takes the form 
$$C_{r,3}:\,\hat y=w^{-1}\left(\frac{1}{\beta(1-\mu_3)}\right)+\mathcal O(\epsilon_3). $$ 
Within $C_{r,3}$ there exists an invariant line
\begin{align*}
l_{r,3}:\, r_3=0,\,\hat y = \hat y_c,\,\epsilon_3\ge 0,\,\mu_3 =0.
\end{align*}
The center manifold $C_{r,3}$ is overflowing and hence unique near $l_{r,3}$ for $\Omega>0$. If $\Omega<0$ then $C_{r,3}$ is non-unique near $l_{a,3}$.
The sub-manifold $M_{r,3}(\epsilon)=M_{r,3}\cap \{r_3^2\epsilon_3=\epsilon\}$ is the extension of $S_{r,\epsilon}$ into chart $\kappa_3$ near the line $\tilde q$. 
Also $M_{r,3}(\epsilon)$ intersects 
\begin{align}
 \Lambda_3^+:\quad \epsilon_3 = \nu^2,\eqlab{Lambda3Plus}
\end{align}
which corresponds to $\Lambda_2^+$ through the coordinate change $\kappa_{23}$ in \eqref{kappa23}, $\mathcal O(\sqrt{\epsilon})$-close to the intersection of $C_{r,3}$ with \eqref{Lambda3Plus}
\begin{align*}
 C_{r,3}\cap \Lambda_3^+:\,\epsilon_3=\nu^2,\,r_3=0,\hat y=\hat y_c+\mathcal O(\nu^2+\mu_3).
\end{align*}
The line $l_{r,3}$ and the section $\Lambda_3^+$ are shown in \figref{kappa3}.
\begin{figure}\begin{center}\includegraphics[width=.65\textwidth]{./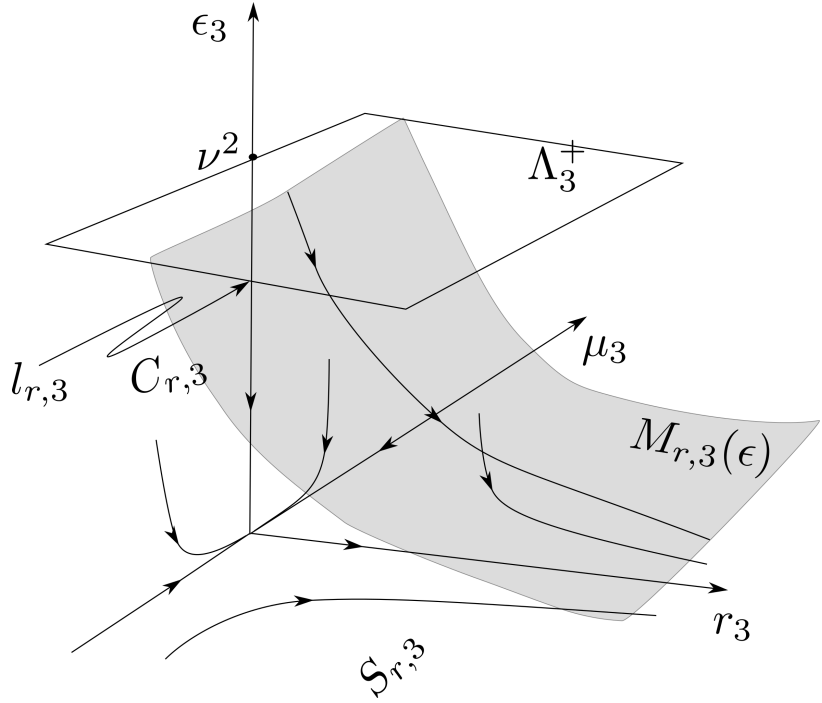}
\caption{The line $l_{r,3}$ and the section $\Lambda_3^+$.}\figlab{kappa3}
                		\end{center}
              \end{figure}

\subsection{Persistence of canards}\seclab{combination}
In this section, we combine results from the three charts $\kappa_{1-3}$ to show how singular canards in our PWS system can survive the regularization. For singular canards, there were two cases to consider: $\Omega>0$ (vrai singular canards) and $\Omega<0$ (faux singular canards). We now consider the same two cases for canards in the regularized system.

For $\Omega>0$, $C_{a,1}$ and $C_{r,3}$ are unique as center manifolds. By transforming $l_2 \cap \Lambda_2^{\mp}$ of \eqref{l2Lambdapm} into charts $\kappa_1$ and $\kappa_3$ we conclude that $\kappa_{12}(l_2)=l_{a,1}\subset C_{a,1}$ for $x_2\ll 0$ and $\kappa_{32}(l_2)=l_{r,3}\subset C_{r,3}$ for $x_2\gg 0$. It is therefore possible to extend $C_{a,1}$ and $C_{r,3}$ into $\kappa_2$ as invariant manifolds $C_{a,2}$ and $C_{r,2}$ by using the invariant line $l_2$ as a guide. Then from the analyses in \secref{chartKappa1} and \secref{chartKappa3}, we conclude that the manifolds $M_{a,2}(\epsilon) = \kappa_{21}(M_{a,1}(\epsilon))$ and $M_{r,2}(\epsilon) = \kappa_{23}(M_{r,3}(\epsilon))$ intersect $\Lambda_{2}^\mp$ a distance $\mathcal O(r_2)$-close  to the intersections of $C_{a,2}$ and $C_{r,2}$, respectively, where $r_2=\sqrt{\epsilon}$ by \eqref{eqChartKappa2}. By applying regular perturbation theory in chart $\kappa_2$ the manifolds $M_{a,2}(\epsilon)$ and $M_{r,2}(\epsilon)$ can be continued all the way up to $x_2=0$ where they remain $\mathcal O(r_2)$-close to $C_{a,2}$ and $C_{r,2}$, respectively. 

The manifolds $C_{a,2}$ and $C_{r,2}$ intersect along $l_2$. We now investigate whether the intersection is transverse. If so, we can conclude that $M_{a,2}(\epsilon)$ and $M_{r,2}(\epsilon)$ also intersect transversally $\mathcal O(r_2)$-close to $l_2$, since $M_{a,2}(\epsilon)$ and $M_{r,2}(\epsilon)$ are $\mathcal O(r_2)$-close to $C_{a,2}$ and $C_{r,2}$, respectively. 
We first eliminate time from \eqref{eqnChartK2} by division with $\dot x_2$ and re-writting our equations in terms of $w$ instead of $\hat y$ using \eqref{weqn}. 
Then we let $(u(x_2),\tilde \mu_2(x_2),\tilde r_2(x_2))$ denote the variations of $(w,\mu_2,r_2)$ about $l_2:\,(w,\mu_2,r_2)=(\beta^{-1},0,0)$. This gives
\begin{align}
 \frac{du}{dx_2}& = \lambda(\beta x_2 u-\tilde \mu_2+\tilde r_2G_2(x_2,u)),\eqlab{var}\\
 \frac{d\tilde \mu_2}{dx_2}&=0,\nonumber\\
 \frac{d\tilde r_2}{dx_2} &=0,\nonumber
\end{align}
where 
\begin{align}
 \lambda &= \frac{(\beta+1)^2 \phi_{1,c}}{2\beta\Omega },\quad
 G_2(x_2) = \beta^{-1} \left (x_2^2 (\eta^-+\beta \eta^+)- \hat y_{c} (\chi^-+\beta \chi^+)\right),\eqlab{lambda}
\end{align}
and
\begin{align*}
 \phi_{1,c} \equiv \phi'(\hat y_{c}) >0.
\end{align*}

For simplicity we now drop the tildes from $\mu_2$ and $r_2$. 
We will apply the following lemma to \eqref{var}.
\begin{lemma}\lemmalab{szmolyan_canards_2001}
 (\cite[Proposition 4.2]{szmolyan_canards_2001}) $C_{a,2}$ and $C_{r,2}$ intersect transversally along $l_2$ if and only if there exists no non-zero solution of \eqref{var}$\vert_{r_2=0}$ which has algebraic growth for both $x_2\rightarrow \pm \infty$.
\end{lemma}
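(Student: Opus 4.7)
The plan is to translate the geometric transversality statement into a growth-rate condition on solutions of the variational equation \eqref{var}$\vert_{r_2=0}$ along $l_2$. First I would recognise that \eqref{eqnChartK2} is four-dimensional with $r_2$ and $\mu_2$ as first integrals, and that $l_2$ is parametrised by the special solution $x_2=\beta^{-1}\Omega\,t_2$, $\hat y=\hat y_c$ at $(r_2,\mu_2)=(0,0)$. Linearising about this solution, after eliminating time by division by $\dot x_2$ and switching from $\hat y$ to $w$ via \eqref{weqn}, produces exactly \eqref{var}$\vert_{r_2=0}$. The three-dimensional invariant manifolds $C_{a,2}$ and $C_{r,2}$ both contain $l_2$, and at each $p\in l_2$ the directions of $T_pC_{a,2}$ and $T_pC_{r,2}$ transverse to the common subspace spanned by $\partial_{x_2}$, $\partial_{r_2}$, $\partial_{\mu_2}$ are described, by invariance, as the $u$-components of variational solutions propagating along $l_2$. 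Transversality $T_pC_{a,2}+T_pC_{r,2}=T_p\mathbb{R}^4$ thus reduces to the linear independence of these two $u$-components in the 1-dimensional normal direction.

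Next I would match each normal direction with a growth condition at infinity. By construction $C_{a,2}=\kappa_{21}(C_{a,1})$ where $C_{a,1}$ is a smooth center manifold at $r_1=0$ in chart $\kappa_1$; since the coordinate change \eqref{kappa21} involves only powers of $x_2$, smoothness of $C_{a,1}$ at the edge forces any tangent direction of $C_{a,2}$ to translate into a variational solution with at most algebraic growth as $x_2\to-\infty$. Conversely, an algebraically growing solution at $-\infty$ lifts through $\kappa_{12}$ to a smooth tangent direction of $C_{a,1}$ at $l_{a,1}$, hence represents a tangent vector of $C_{a,2}$. An analogous argument in chart $\kappa_3$ characterises the normal tangent direction of $C_{r,2}$ as an algebraically growing variational solution at $x_2\to+\infty$. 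Combining the two, transversality of $C_{a,2}$ and $C_{r,2}$ along $l_2$ holds if and only if no nonzero solution of \eqref{var}$\vert_{r_2=0}$ is simultaneously of algebraic growth at both $-\infty$ and $+\infty$, which is the claim.

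The main obstacle is making the correspondence \emph{tangent vectors of the center manifold} $\leftrightarrow$ \emph{algebraically growing variational solutions} rigorous despite the non-uniqueness of $C_{a,1}$ and $C_{r,3}$ when $\Omega<0$. The resolution is that jets of center manifolds along $l_{a,1}$ and $l_{r,3}$ are unique to all polynomial orders, which is all that is needed since the chart transitions $\kappa_{12},\kappa_{32}$ only introduce polynomial factors in $x_2$. One must additionally check that variational solutions of exponential growth or decay, which do exist for \eqref{var}$\vert_{r_2=0}$ (e.g.\ the homogeneous mode $u=C\exp(\lambda\beta x_2^2/2)$), acquire essential singularities at $r_1=0$ or $r_3=0$ under the chart transitions, and hence cannot be realised as smooth tangent vectors of any center manifold. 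This filtering by the blowup is precisely what singles out algebraic modes and makes the characterisation sharp.
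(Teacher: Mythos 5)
Your argument is essentially the paper's own proof (which is itself a rendering of Szmolyan--Wechselberger's Proposition 4.2): variations tangent to $C_{a,2}$ and $C_{r,2}$ along $l_2$ are exactly the solutions of \eqref{var}$\vert_{r_2=0}$ with algebraic growth as $x_2\rightarrow -\infty$ and $x_2\rightarrow +\infty$ respectively, while transverse variations grow exponentially there, so transversality fails precisely when some nonzero solution is algebraic at both ends; your justification of this dichotomy through the phase-directional charts $\kappa_{1,3}$ and the polynomial chart transitions is the intended mechanism, with the uniqueness available for $\Omega>0$ (the only case in which the lemma is invoked) playing the role of your jet argument. The one correction is bookkeeping: as defined in the paper, $C_{a,2}$ and $C_{r,2}$ are the extensions into chart $\kappa_2$ of $C_{a,1}\subset\{r_1=0\}$ and $C_{r,3}\subset\{r_3=0\}$, hence two-dimensional surfaces inside the invariant hyperplane $\{r_2=0\}$ (which is why \eqref{var} is restricted to $r_2=0$), so transversality is meant within $\{r_2=0\}$ rather than as $T_pC_{a,2}+T_pC_{r,2}=T_p\mathbb{R}^4$; modulo the flow direction this still reduces to the same one-dimensional comparison of $u$-components, and your argument goes through unchanged.
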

\begin{proof}
 Variations within the tangent spaces $T_0 C_{a,2}$ and $T_0 C_{r,2}$ are characterized by algebraic growth in the past ($x_2\rightarrow -\infty$) and in the future ($x_2\rightarrow \infty$), respectively. Since $C_{a,2}$ and $C_{r,2}$ are unique, variations normal to $T_0 C_{a,2}$ and $T_0 C_{r,2}$ will be characterized by exponential growth in the past and in the future, respectively. 
\end{proof}

The following lemma describes the properties of the solutions of \eqref{var}$_{r_2=0}$ necessary to invoke \lemmaref{szmolyan_canards_2001}.
\begin{lemma}\lemmalab{asymptotics}
 If $\Omega>0$ then \eqref{var}$_{r_2=0}$ has two linearly independent solutions 
 $$\gamma^-=(u^-,\mu_{2}^-)=\left(\sqrt{\frac{\pi \lambda}{2\beta}}(1-\textnormal{erf}\left(\sqrt{\frac{\lambda\beta }{2}} x_2\right))e^{\frac{ -\lambda \beta x_2^2}{2}},1\right),$$
 and 
 $$\gamma^+=(u^+,\mu_{2}^+)=\left(\sqrt{\frac{\pi \lambda}{\beta}}(1+ \textnormal{erf}\left(\sqrt{\frac{\lambda\beta }{2}} x_2\right))e^{\frac{ -\lambda \beta x_2^2}{2}},1\right).$$
 The solutions $\gamma^{\mp}$ grow exponentially as $x_2\rightarrow \pm \infty$ but the growth is only algebraic as $x_2\rightarrow \mp \infty$, respectively. 
 
If $\Omega<0$, then neither of the solutions of \eqref{var}$_{r_2=0}$ grows exponentially as $x_2\rightarrow \pm \infty$.
\end{lemma}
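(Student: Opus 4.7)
The plan is to solve the system \eqref{var}$|_{r_2=0}$ in closed form and then extract the asymptotics from the error function. First, the second equation gives $\mu_2$ as a first integral, so the lemma reduces to the scalar inhomogeneous linear ODE
\begin{align*}
 u'(x_2) = \lambda\beta x_2 u(x_2) - \lambda,
\end{align*}
after setting $\mu_2=1$ (the complementary $\mu_2=0$ mode is just the homogeneous direction). Using the integrating factor $e^{-\lambda\beta x_2^2/2}$ I would write the general solution as
\begin{align*}
 u(x_2) = e^{\lambda\beta x_2^2/2}\left(C - \lambda\int_a^{x_2} e^{-\lambda\beta t^2/2}\, dt\right),
\end{align*}
and take $C=0$ with the two choices $a=-\infty$ and $a=+\infty$ to single out two distinguished solutions. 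The change of variables $s=\sqrt{\lambda\beta/2}\,t$ (legitimate under $\Omega>0$, since then $\lambda\beta>0$) converts the Gaussian integrals into complementary error functions and reproduces exactly the stated formulae for $\gamma^{\mp}$.

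Next I would extract asymptotic behaviour from the two classical facts $\mathrm{erfc}(z) = \frac{e^{-z^2}}{z\sqrt{\pi}}\bigl(1+O(z^{-2})\bigr)$ as $z\to+\infty$, and $\mathrm{erfc}(z)\to 2$ as $z\to-\infty$. On the side where the argument of the $\mathrm{erfc}$ in $\gamma^\mp$ tends to $+\infty$, the Gaussian tail exactly cancels the prefactor $e^{\lambda\beta x_2^2/2}$, leaving an algebraic $1/x_2$ remainder; on the opposite side the $\mathrm{erfc}$ saturates at $2$, so the full Gaussian prefactor survives and gives exponential growth. Linear independence of $\gamma^-$ and $\gamma^+$ will then be automatic, since they have exponential growth at opposite infinities.

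The case $\Omega<0$ is essentially free: then $\lambda\beta<0$, so the homogeneous factor $e^{\lambda\beta x_2^2/2}$ itself decays super-exponentially as $|x_2|\to\infty$. The Gaussian integral in the general solution now grows super-exponentially, but one integration by parts yields
\begin{align*}
 e^{\lambda\beta x_2^2/2}\int_0^{x_2} e^{-\lambda\beta t^2/2}\,dt = -\frac{1}{\lambda\beta x_2}+O(x_2^{-3}),\quad |x_2|\to\infty,
\end{align*}
so every solution of \eqref{var}$|_{r_2=0}$ decays like $1/x_2$, and none grows exponentially in either direction.

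I expect the main, purely technical obstacle to be the careful bookkeeping of the cancellation in the $\Omega>0$ step: one must verify that the subleading tail of $\mathrm{erfc}$ does not spoil the clean dichotomy between algebraic and exponential growth, which amounts to a uniform tail estimate. Everything else is direct quadrature and substitution into known asymptotics.
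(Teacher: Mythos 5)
Your proposal is correct and follows essentially the same route as the paper: the paper's (very terse) proof likewise rests on the explicit quadrature of \eqref{var}$\vert_{r_2=0}$ together with the asymptotics of the error function for $\Omega>0$, and the estimate $\int_0^{z}e^{t^2-z^2}dt=\mathcal O(z^{-1})$ (your integration by parts) for $\Omega<0$. One caveat: your integrating-factor computation actually produces the factor $e^{+\lambda\beta x_2^2/2}$ (and a different normalization for $\gamma^+$), so you are implicitly correcting sign/constant typos in the formulae as printed rather than ``reproducing them exactly''; with that correction your algebraic-versus-exponential dichotomy, and hence the lemma, is exactly as claimed.
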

\begin{proof}
For $\Omega>0$ the result follows from the asymptotic behaviour of the error-function:
\begin{align*}
 \text{erf}(z) = \frac{2}{\sqrt{\pi}}\int_0^z e^{-t^2}dt.
\end{align*}
 For $\Omega<0$ we use that $\int_0^{z} e^{t^2-z^2}dt =\mathcal O(z^{-1})$ for $z\rightarrow \pm \infty$.
\end{proof}

Therefore by \lemmaref{szmolyan_canards_2001}, the manifolds $C_{a,2}$ and $C_{r,2}$ intersect transversally for $\Omega>0$ along $l_2$. Hence $M_{a,2}(\epsilon)$ and $M_{r,2}(\epsilon)$ intersect transversally $\mathcal O(\sqrt{\epsilon})$-close to $r_2$ for $\mu_2=\mu_{2,c}$ where
\begin{align}
 \mu_{2,c} = \mathcal O(r_2) \eqlab{mu2Canard}
\end{align}
and $r_2=\sqrt{\epsilon}$ sufficiently small. In fact, the following lemma allow us to calculate $\mu_{2,c}$ to lowest order:
\begin{lemma}\lemmalab{AlgebraicGrowthSolR2}
 Consider $\Omega>0$, then 
 the only solution of \eqref{var} which has algebraic growth for $x_2\rightarrow \pm \infty$ is
\begin{align*}
 u(x_2) &=-r_2\beta^{-2} (\eta^-+\beta \eta^+)x_2,
\end{align*}
for
\begin{align}
\mu_2 =  -\left( \frac{2\Omega }{(\beta+1)^2 \phi_{1,c}} (\eta^-+\beta \eta^+)+\hat y_{c} (\chi^-+\beta \chi^+)\right)\beta^{-1} r_2.\eqlab{mu2r2}
\end{align}
\end{lemma}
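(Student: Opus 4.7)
The plan is to exploit the fact that \eqref{var}, once restricted to the invariant subset on which $r_2$ and $\mu_2$ are treated as fixed parameters, is a single scalar linear first-order ODE in $u$, namely $du/dx_2=\lambda\beta x_2 u-\lambda\mu_2+\lambda r_2 G_2(x_2)$. Its homogeneous solutions are multiples of $\exp(\lambda\beta x_2^2/2)$, and under the running hypothesis $\Omega>0$ (together with $\beta>0$ and $\phi_{1,c}>0$) we have $\lambda>0$, so this factor grows super-algebraically as $x_2\to\pm\infty$. Consequently, any solution with at most algebraic growth at both infinities must coincide with a particular solution that contains no multiple of $\exp(\lambda\beta x_2^2/2)$, and the existence of such a distinguished particular solution reduces to finding a polynomial $u(x_2)$ that solves the ODE. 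Uniqueness then follows immediately from linearity.

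First I would seek the particular solution in the form $u(x_2)=a_1 x_2+a_0$. The choice of degree is dictated by the inhomogeneity: since $G_2(x_2)$ is quadratic in $x_2$ and the term $\lambda\beta x_2 u$ raises the degree by one, a linear ansatz is the only polynomial degree compatible with the balance of powers. Substituting into \eqref{var} and equating coefficients of $x_2^3$, $x_2^2$, $x_2^1$ and $x_2^0$ yields a cascade of conditions. The $x_2^3$-balance forces no quadratic term in $u$ (confirming the ansatz is sharp); the $x_2^2$-balance determines
\[
a_1=-r_2\beta^{-2}(\eta^-+\beta\eta^+);
\]
the $x_2^1$-balance forces $a_0=0$; and the constant balance becomes the compatibility condition
\[
a_1=-\lambda\mu_2-\lambda r_2\beta^{-1}\hat y_c(\chi^-+\beta\chi^+).
\]
Solving this last relation for $\mu_2$ and substituting $\lambda^{-1}=2\beta\Omega/((\beta+1)^2\phi_{1,c})$ from \eqref{lambda} yields the expression \eqref{mu2r2}.

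Second I would argue uniqueness. By the structure theorem for linear first-order ODEs, any solution of \eqref{var} with fixed $(\mu_2,r_2)$ differs from the polynomial $u_p$ constructed above by a term $C\exp(\lambda\beta x_2^2/2)$. Since $\lambda\beta>0$, this Gaussian factor dominates every polynomial as $x_2\to\pm\infty$, so algebraic growth at both infinities forces $C=0$. Therefore $u=u_p$, and in particular $\mu_2$ must take the precise value given in the lemma; for any other $\mu_2$, no algebraically-growing solution exists at all.

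The only conceptual subtlety is verifying that the polynomial ansatz actually closes, which is what makes the compatibility condition a single scalar equation for $\mu_2$ rather than an overdetermined system: the inhomogeneity $-\mu_2+r_2 G_2$ has no $x_2^1$ component, which is precisely what allows $a_0=0$ to be consistent with the $x_2$-balance. The remainder of the argument is a bookkeeping exercise, so I do not expect any serious obstacle beyond the algebra.
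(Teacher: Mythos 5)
Your overall strategy is essentially the paper's: for $\Omega>0$ (with $\beta>0$ and $\phi_{1,c}>0$, so $\lambda\beta>0$) the homogeneous solutions of the scalar equation in \eqref{var} are multiples of $e^{\lambda\beta x_2^2/2}$, hence any algebraically growing solution must be the distinguished particular solution, and killing the exponential part pins down $\mu_2$. The paper implements this by writing the full variation-of-constants solution \eqref{uSolution2nd} in terms of the error function and invoking \lemmaref{asymptotics}; your polynomial ansatz is an equivalent, slightly leaner route to the particular solution. However, your last step does not check out against your own premises. From \eqref{var} with $G_2$ as defined in \eqref{lambda}, your constant-term balance $a_1=-\lambda\mu_2-\lambda r_2\beta^{-1}\hat y_c(\chi^-+\beta\chi^+)$ combined with $a_1=-r_2\beta^{-2}(\eta^-+\beta\eta^+)$ gives $\mu_2=\beta^{-1}r_2\bigl(\tfrac{2\Omega}{(\beta+1)^2\phi_{1,c}}(\eta^-+\beta\eta^+)-\hat y_c(\chi^-+\beta\chi^+)\bigr)$, whose $\eta$-term has the \emph{opposite} sign to \eqref{mu2r2}; asserting that solving the relation ``yields \eqref{mu2r2}'' is therefore a sign error, or at least an unverified claim. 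The root cause is an inconsistency internal to the paper: the erf-coefficient in \eqref{uSolution2nd} matches \eqref{mu2r2} but not the stated $G_2$, and re-deriving the variational equation directly from \eqref{eqnChartK2} shows the $x_2^2$-term of $G_2$ should carry the opposite sign (which validates \eqref{mu2r2} and flips the sign of the displayed $u$). You should have flagged this mismatch rather than declaring agreement.

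There is also a genuine (though easily repaired) logical gap in your uniqueness argument. The sentence ``any solution of \eqref{var} with fixed $(\mu_2,r_2)$ differs from the polynomial $u_p$ by $C e^{\lambda\beta x_2^2/2}$'' is only true at the compatible value $\mu_2^*$, since $u_p$ solves the equation only there; yet the lemma also claims that for any other $\mu_2$ no algebraically growing solution exists, which you assert but do not prove. To close it, subtract $u_p$ (at $\mu_2^*$) from a putative algebraically growing solution at $\mu_2\ne\mu_2^*$: the difference solves $v'=\lambda\beta x_2 v-\lambda(\mu_2-\mu_2^*)$, whose general solution is $v(x_2)=e^{\lambda\beta x_2^2/2}\bigl(v(0)-\lambda(\mu_2-\mu_2^*)\sqrt{\tfrac{\pi}{2\lambda\beta}}\,\mathrm{erf}\bigl(\sqrt{\tfrac{\lambda\beta}{2}}x_2\bigr)\bigr)$; algebraic growth as $x_2\to+\infty$ and as $x_2\to-\infty$ impose two incompatible conditions on $v(0)$ unless $\mu_2=\mu_2^*$ (this is exactly the content of \lemmaref{asymptotics}). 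The paper obtains this direction for free from the explicit general solution \eqref{uSolution2nd}, where algebraic growth at both infinities forces $u(0)=0$ and the vanishing of the erf coefficient, i.e.\ \eqref{mu2r2}. With the sign bookkeeping repaired and this extra step supplied, your argument is complete and equivalent to the paper's.
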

\begin{proof}
The complete solution of \eqref{var} is 
\begin{align}
 u(x_2) &= 
  \bigg(-\left(r_2\beta^{-2} \left(\eta^-+\beta \eta^+\right) +\lambda (\mu_2+r_2\beta^{-1} \hat y_{c} (\chi^-+\beta \chi^+)) \right)\sqrt{\frac{\pi}{2 \lambda\beta}} \text{erf}\left(\sqrt{\frac{\lambda\beta }{2}} x_2\right)+u(0)\bigg)e^{\frac{\lambda \beta x_2^2}{2}}\nonumber\\
  &-r_2\beta^{-2} (\eta^-+\beta \eta^+)x_2.  \eqlab{uSolution2nd}
\end{align}
Setting $u(0)=0$ and \eqref{mu2r2} the result follows from \lemmaref{asymptotics}.
\end{proof}

Then following \cite[Prop. 3.1]{krupa_extending2_2001} and using \eqref{mu2r2} we obtain
\begin{align}
  \mu_{2,c}&=-\left(\frac{2 \Omega }{(\beta+1)^2 \phi_{1,c}} (\eta^-+\beta \eta^+) +\hat y_{c}(\chi^-+\beta \chi^+) \right)\beta^{-1} r_2 +\mathcal O(r_2^2),\eqlab{muc}
%
%
%
\end{align}
and hence we have the following main theorem.
\begin{theorem}\label{canardTheorem}
Assuming \eqref{canardAssumption}, the regularized system \eqref{Xeps}  has a maximal canard at $$\mu =\sqrt{\epsilon} \mu_{2,c}(\sqrt{\epsilon})=\mathcal O(\epsilon),$$ where $\mu_{2,c}$ is given by \eqref{muc}, which is $\mathcal O(\sqrt{\epsilon})$-close to the singular canard.
\end{theorem}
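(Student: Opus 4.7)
The plan is to assemble the three-chart blowup picture already developed in Sections \ref{sec:chartKappa1}--\ref{sec:chartKappa3} into a transverse-intersection argument in chart $\kappa_2$. Since the canard assumption \eqref{canardAssumption} gives $\Omega>0$ and $\beta>0$, the center manifolds $C_{a,1}\subset\{r_1=0\}$ and $C_{r,3}\subset\{r_3=0\}$ are overflowing in $\epsilon_1$, $\epsilon_3$ respectively, hence unique. Their invariant lines $l_{a,1}$ and $l_{r,3}$ correspond, under the coordinate changes $\kappa_{21}$ and $\kappa_{23}$, to the two ends of the invariant line $l_2$ in chart $\kappa_2$, so $C_{a,1}$ and $C_{r,3}$ extend uniquely into $\kappa_2$ as invariant manifolds $C_{a,2}$ and $C_{r,2}$ sharing $l_2$.

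The heart of the argument is to show that $C_{a,2}\cap C_{r,2}=l_2$ is a transverse intersection. For this, I would apply \lemmaref{szmolyan_canards_2001} to the variational system \eqref{var}$|_{r_2=0}$ along $l_2$. \lemmaref{asymptotics} provides two explicit linearly independent solutions expressible via the error function, each of which grows algebraically on one side of $l_2$ and exponentially on the other; this rules out any non-trivial algebraically bounded solution, so the transversality hypothesis of \lemmaref{szmolyan_canards_2001} is satisfied. Perturbation theory in chart $\kappa_2$, applied to the regular system \eqref{eqnChartK2} in the parameter $r_2$, then shows that the Fenichel extensions $M_{a,2}(\epsilon)=\kappa_{21}(M_{a,1}(\epsilon))$ and $M_{r,2}(\epsilon)=\kappa_{23}(M_{r,3}(\epsilon))$ lie $\mathcal O(r_2)=\mathcal O(\sqrt{\epsilon})$-close to $C_{a,2}$ and $C_{r,2}$ and, by the implicit function theorem applied to the intersection equation, meet transversally along a locally unique curve parametrized by $\mu_2=\mu_{2,c}(r_2)=\mathcal O(r_2)$ as in \eqref{mu2Canard}. \lemmaref{AlgebraicGrowthSolR2} supplies the first-order term \eqref{muc} of this expansion by selecting the unique $\mu_2$ for which the inhomogeneous variational equation admits the algebraically bounded solution.

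Finally, I would undo the blowup: since $\mu=r_2\mu_2$ and $r_2=\sqrt{\epsilon}$ in chart $\kappa_2$, the canard value in the original parameter is $\mu=\sqrt{\epsilon}\,\mu_{2,c}(\sqrt{\epsilon})=\mathcal O(\epsilon)$, and the intersection locus lies $\mathcal O(\sqrt{\epsilon})$-close to $l_2$, which by \thmref{criticalManifold} is precisely the image of the PWS singular canard under the blowup. The main obstacle is less conceptual than bookkeeping: one must ensure that the uniqueness of $C_{a,1},\,C_{r,3}$ (which hinges on $\Omega>0$ through the signs of $\dot\epsilon_1$ and $\dot\epsilon_3$), the consistent extension of these manifolds through $\kappa_2$ via $l_2$, and the $\mathcal O(\sqrt{\epsilon})$ closeness of $M_{a,2}(\epsilon)$, $M_{r,2}(\epsilon)$ to $C_{a,2}$, $C_{r,2}$ all the way up to $x_2=0$ are compatible, so that the single transversality computation along $l_2$ genuinely pins down the canard value.
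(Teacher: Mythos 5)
Your proposal is correct and follows essentially the same route as the paper: uniqueness of $C_{a,1}$ and $C_{r,3}$ from $\Omega>0$, their extension through chart $\kappa_2$ along $l_2$, transversality of $C_{a,2}$ and $C_{r,2}$ via \lemmaref{szmolyan_canards_2001} and \lemmaref{asymptotics}, the $\mathcal O(r_2)$-closeness of $M_{a,2}(\epsilon)$, $M_{r,2}(\epsilon)$ and the implicit function theorem to pin down $\mu_{2,c}$ via \lemmaref{AlgebraicGrowthSolR2}, followed by blowing down with $\mu=r_2\mu_2$, $r_2=\sqrt{\epsilon}$. This is precisely the analysis in \secref{combination} that the paper's proof of Theorem~\ref{canardTheorem} invokes.
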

\begin{proof}
 The statements follow from the analysis above. 
\end{proof}

 In the case where $\Omega<0$ then $C_{a,1}$ and $C_{r,3}$ are both non-unique and there exists a faux canard. The proof of this is very similar to the proof of faux canards in $\mathbb R^3$ \cite{KristiansenHogan2015,szmolyan_canards_2001}.

\section{Equilibria of the regularized system}\seclab{equilibriaregular}
We now revert to describing the dynamics in terms of the fast time $\tau$ in \eqref{tildeTau} for the remainder of the paper. In \secref{pseudo}, pseudo-equilibria of the PWS system were shown to exist for $\alpha\delta<0$ only. In \thmref{criticalManifold}, we showed that the equations of the reduced problem \eqref{ReducedProblemY2} agree with the sliding equations \eqsref{slidingEqns}{sigma}. Hence, as a consequence of the implicit function theorem, pseudo-equilibria of the sliding vector field $X_{sl}(\textbf x,\mu)$ for $\vert \mu\vert >\mu_0> 0$ perturb to locally unique equilibria on the slow manifold of the regularized problem for $0<\epsilon\le \epsilon_0(\mu_0)$ sufficiently small. Indeed, from our fast system \eqref{ExtendedProblem1W}, we find an equilibrium at
\begin{align}
 x^*&= \beta \delta\Omega^{-1} \mu+\mathcal O(\mu^2+\epsilon), \eqlab{EqSlowManifold}\\
  \hat y^* &= w^{-1}(-\alpha^{-1}\delta)+\mathcal O(\mu+\epsilon).\nonumber
\end{align}
Since we require $w(\hat y)>0$ by \eqref{weqn}, $(x^*,\hat y^*)$ is an equilibrium provided $\alpha \delta <0$. This is in accordance with equation \eqref{conditionPseudo} for the PWS system. 
The stability of this equilibrium is described by \propref{pwsProposition}. However, for $\mu =0$, $\epsilon=0$, the equilibrium lies on the non-hyperbolic line $\tilde q$, defined in \eqref{qtilde}, of the critical manifold. Hence Fenichel theory can not give a description of the equilibrium for $\epsilon\ne 0$. 
To accurately follow the equilibria near $\mu=0$ we need to consider the extended equations in chart $\kappa_2$, equations \eqref{eqnChartK2}. 
\subsection{Chart $\kappa_2$}
From \eqref{eqnChartK2}, we find the following equilibrium:
\begin{align}
 (x_2^*,\hat y^*) = \left(\beta \delta\Omega^{-1}\mu_2,\,w^{-1}(-\alpha^{-1} \delta)\right)+\mathcal O(r_2).\eqlab{eqx2w}
\end{align}
The equilibrium $(x_2^*,\hat y^*)$ intersects the section $\Lambda_2^-$, defined in \eqref{Lambda2pm}, when
\begin{align}
\hat y^* &= w^{-1}(-\alpha^{-1}\delta)+\mathcal O(r_2),\eqlab{eqx2wIntersection}\\
\mu_2 &= -\beta^{-1} \delta \Omega \nu^{-1}+\mathcal O(r_2).\nonumber
\end{align}

Consider $r_2=0$. The linearization of \eqref{eqnChartK2} about $(x_2^*,\hat y^*)$ is then given by
\begin{align}
 \dot z &= Az,\quad A=\begin{pmatrix}
           0 & \alpha w'(-\alpha^{-1} \delta)\\
           \alpha^{-1} \Omega &   \alpha \beta \Omega^{-1} w'(-\alpha^{-1} \delta )\mu_2
          \end{pmatrix},\eqlab{MatrixA}
\end{align}
where $z$ is the variation of $(x_2,\hat y)$ about $(x_2^*,\hat y^*)$ and 
\begin{align*}
 w'(-\alpha^{-1}\delta) = -\frac{2\phi'(\hat y^*)}{(1+\phi(\hat y^*))^2}=-\frac12 (1-\alpha^{-1}\delta)^2 \phi'(\hat y^*),
\end{align*}
using \eqref{wPrime} and the fact that $$\phi(\hat y^*) = \frac{1+\alpha^{-1}\delta}{1-\alpha^{-1}\delta}.$$ The determinant of the coefficient matrix $A$ is independent of $\mu_2$:
\begin{align}
 \text{det}\,A =
  -\Omega w'(-\alpha^{-1}\delta)
  \eqlab{detA}
\end{align}
and the trace of $A$ is given by
\begin{align}
\text{tr}\,A=\alpha \beta \Omega^{-1} w'(-\alpha^{-1} \delta )\mu_2
\eqlab{trA}
\end{align}
which vanishes for $\mu_2=0$. 
Since $w'<0$ from \eqref{wPrime}, the sign of $\text{det}\,A$ is determined by the sign of $\Omega$. Therefore we conclude the following: 
\begin{lemma}\lemmalab{eqStudyChartK2}
Consider $r_2=0$ and suppose $\alpha\delta<0$. 
\begin{itemize}
\item For $\Omega<0$ we have:
\begin{itemize}
\item The equilibrium $(x_2^*,\hat y^*)$, defined in \eqref{eqx2w}, is a neutral saddle for $\mu_2=\mu_{2,N}\equiv 0$ and a generic saddle for $\mu_2\ne \mu_{2,N}$. 
\item The stable (unstable) eigenspace associated with $(x_2^*,\hat y^*)$ and the linearization \eqref{MatrixA} is asymptotically vertical for $\beta \delta \Omega^{-1}\mu_2\rightarrow \mp \infty$. 
\item The unstable (stable) eigenspace associated with $(x_2^*,\hat y^*)$ and the linearization \eqref{MatrixA} is asymptotically horizontal for $ \beta \delta \Omega^{-1}\mu_2\rightarrow \mp \infty$.
\end{itemize}
\item For $\Omega>0$ we have:
\begin{itemize}
 \item A Hopf bifurcation at $\mu_2=\mu_{2,H}\equiv 0$. 
  \item The equilibrium $(x_2^*,\hat y^*)$ is attracting (repelling) for $\beta \delta \Omega^{-1}\mu_2 \lessgtr 0$.
  \item For $\vert \mu_2 \vert \in (\mu_{2,H},\mu_{2,F})$, where
  \begin{align}
 \mu_{2,F} \equiv \frac{2\Omega^{3/2}}{\sqrt{-w'(-\alpha^{-1}\delta)} \vert \alpha\beta\vert },\eqlab{mu2pm}
\end{align}
the equilibrium $(x_2^*,\hat y^*)$ is a focus.
  \item For $\vert \mu_2 \vert \ge \mu_{2,F}$, $(x_2^*,\hat y^*)$ is a node.
  \item The strong eigenspace is asymptotically vertical for $\beta \delta \Omega^{-1}\mu_2\rightarrow \pm \infty$. 
  \item The weak eigenspace is asymptotically horizontal for $\beta \delta \Omega^{-1}\mu_2\rightarrow \pm \infty$
\end{itemize}
\end{itemize}
       
The quantities $\mu_{2,N}$, $\mu_{2,H}$ and $\mu_{2,F}$ \textit{perturb} by an amount of $\mathcal O(r_2)$ for $r_2\ne 0$ by the implicit function theorem. 
\end{lemma}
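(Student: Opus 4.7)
The plan is to read off everything from the trace/determinant formulas \eqref{detA}--\eqref{trA} already computed in the text, using the standard classification of $2\times 2$ linear systems. Since $w'(-\alpha^{-1}\delta)<0$ by \eqref{wPrime}, we have $\mathrm{sgn}(\det A) = \mathrm{sgn}(\Omega)$ and $\mathrm{sgn}(\mathrm{tr}\, A) = -\mathrm{sgn}(\alpha\beta\Omega^{-1}\mu_2) = \mathrm{sgn}(\beta\delta\Omega^{-1}\mu_2)$ (using $\alpha\delta<0$). This is the single observation that drives the whole proof.

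First I would handle the case $\Omega<0$. Here $\det A<0$ automatically, so the eigenvalues of $A$ are real with opposite signs for every $\mu_2$, giving a generic saddle; $\mathrm{tr}\,A$ vanishes exactly when $\mu_2=0$, which is by definition a neutral saddle, so set $\mu_{2,N}=0$. For the eigenspace asymptotics, I use that any eigenvector of
$$A=\begin{pmatrix}0 & \alpha w'(-\alpha^{-1}\delta)\\ \alpha^{-1}\Omega & \alpha\beta\Omega^{-1}w'(-\alpha^{-1}\delta)\mu_2\end{pmatrix}$$
associated with an eigenvalue $\lambda$ can be written (from the first row) as $v=(\alpha w'(-\alpha^{-1}\delta),\lambda)^\top$, so the slope $v_2/v_1$ is proportional to $\lambda$. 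As $|\mu_2|\to\infty$ the two eigenvalues satisfy $\lambda_+ = \mathrm{tr}\,A + O(\mu_2^{-1})$ and $\lambda_- = \det A/\mathrm{tr}\,A + O(\mu_2^{-3})$, so the eigenvector with the large $|\lambda|$ becomes vertical and the one with small $|\lambda|$ becomes horizontal. Identifying which is stable versus unstable from the sign of $\lambda_\pm$, i.e.\ from $\mathrm{sgn}(\mathrm{tr}\,A)=\mathrm{sgn}(\beta\delta\Omega^{-1}\mu_2)$, gives exactly the stated asymptotic directions.

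Next I turn to $\Omega>0$, where $\det A>0$. At $\mu_2=0$ we have $\mathrm{tr}\,A=0$ and $\det A>0$, so the eigenvalues are purely imaginary $\pm i\sqrt{-\Omega w'(-\alpha^{-1}\delta)}$; transversality $\partial_{\mu_2}(\mathrm{tr}\,A)|_0 = \alpha\beta\Omega^{-1}w'(-\alpha^{-1}\delta)\ne 0$ follows from \asuref{asu1}--\asuref{asu2}, so this is a Hopf bifurcation at $\mu_{2,H}=0$. The sign of $\mathrm{tr}\,A$ settles attractor/repeller character, giving attracting for $\beta\delta\Omega^{-1}\mu_2<0$. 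The discriminant $(\mathrm{tr}\,A)^2-4\det A$ changes sign when
$$\bigl(\alpha\beta\Omega^{-1}w'(-\alpha^{-1}\delta)\bigr)^2\mu_2^2 \;=\; -4\Omega w'(-\alpha^{-1}\delta),$$
which, after solving for $|\mu_2|$ and using $-w'(-\alpha^{-1}\delta)>0$, yields precisely the formula \eqref{mu2pm} for $\mu_{2,F}$. Thus the equilibrium is a focus for $|\mu_2|\in(\mu_{2,H},\mu_{2,F})$ and a node for $|\mu_2|\geq\mu_{2,F}$. The strong/weak eigenvector asymptotics are handled exactly as in the saddle case, since as $|\mu_2|\to\infty$ the strong eigenvalue $\lambda_{\mathrm{str}}\sim \mathrm{tr}\,A$ and the weak eigenvalue $\lambda_{\mathrm{wk}}\sim \det A/\mathrm{tr}\,A$, and the slope $v_2/v_1$ scales with $\lambda$.

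Finally, the perturbation statement for $r_2\ne 0$ is obtained from the implicit function theorem: each of the defining equations $\mathrm{tr}\,A(\mu_2,r_2)=0$ (for $\mu_{2,N}$ and $\mu_{2,H}$) and $(\mathrm{tr}\,A)^2-4\det A=0$ (for $\mu_{2,F}$) is a smooth function of $(\mu_2,r_2)$ with non-zero $\mu_2$-derivative at $r_2=0$ by the computations above, so each root persists as a $C^k$ function of $r_2$ with $O(r_2)$ correction. The main conceptual obstacle is really just bookkeeping: carrying the signs of $\alpha,\beta,\delta,\Omega,w'$ through consistently so that the attracting-versus-repelling and stable-versus-unstable directions come out in the form $\beta\delta\Omega^{-1}\mu_2\gtrless 0$ stated in the lemma; everything else is standard linear algebra.
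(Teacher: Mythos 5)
Your proposal is correct and follows essentially the same route as the paper's proof: classify the linearization \eqref{MatrixA} by $\det A$ and $\operatorname{tr}A$, obtain $\mu_{2,F}$ from the vanishing discriminant, read the asymptotic eigenspace directions from the eigenvalue asymptotics $\lambda\sim\operatorname{tr}A$ and $\lambda\sim\det A/\operatorname{tr}A$ together with the eigenvector slope being proportional to $\lambda$, and invoke the implicit function theorem for the $\mathcal O(r_2)$ perturbation. (Your eigenvector $(\alpha w'(-\alpha^{-1}\delta),\lambda)^\top$ is in fact the correct form, whereas the paper's printed $v_\pm$ omits the factor $w'$, a harmless slip that does not affect the asymptotic directions.)
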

\begin{proof}
 From \eqref{MatrixA} we find the following eigenvalues and eigenvectors:
 \begin{align}
  \lambda_{\pm} &= \frac{\text{tr}\,A}{2}\pm \frac12 \sqrt{\text{tr}^2\,A-4\text{det}\,A},\eqlab{lambdapm}\\
  v_{\pm}&=\begin{pmatrix}
            \frac{\alpha}{\lambda_{\pm}}\\
            1
           \end{pmatrix}.\eqlab{vpm}
 \end{align}
For $\mu_2=0$ we have a neutral saddle for $\Omega<0$ and a center for $\Omega>0$, since $\text{tr}\,A=0$ from \eqref{trA} and the sign of $\text{det}\,A$ is determined by the sign of $\Omega$. Also, since $\frac{d}{d\mu_2}\text{tr}\,A \ne 0 $, we have a Hopf bifurcation at $\mu_2=\mu_{2,H} \equiv 0$ when $\Omega>0$. 

For $\Omega>0$ we note that when 
\begin{align*}
 \text{tr}\, A=\pm 2\sqrt{\text{det}\,A},
\end{align*}
that is, when $\mu_2=\mp \mu_{2,F}$ from \eqref{mu2pm}, then the equilibrium $(x_2^*,\hat y^*)$ is a node. It is attracting (repelling) for those $\mu_2$ for which $\text{tr}\,A \lessgtr 0$. So from \eqref{trA}, we conclude that $(x_2^*,\hat y^*)$ is attracting (repelling) for $\beta \delta \Omega^{-1} \mu_2 \lessgtr 0$.

For either sign of $\Omega$ we have, for $\beta \delta \Omega^{-1} \mu_2\rightarrow \pm \infty$, that
\begin{align*}
 \lambda_{+}&\rightarrow \pm \infty,\\
 \lambda_- &\rightarrow 0.
\end{align*} 
%
%
So, for $\Omega<0$, using \eqref{vpm}, this means that the stable (unstable) eigenspace associated with $(x_2^*,\hat y^*)$ and the linearization \eqref{MatrixA} is asymptotically \textit{vertical} for $\beta \delta \Omega^{-1} \mu_2\rightarrow \mp \infty$. On the other hand the unstable (stable) eigenspace is asymptotically \textit{horizontal} for $\beta \delta \Omega^{-1} \mu_2\rightarrow \mp \infty$.

Similarly for $\Omega>0$ we find that the strong (weak) eigenspace\footnote{The strong (weak) eigenspace is the eigenspace associated with the eigenvalue representing the stronger (weaker) contraction or expansion.} associated with $(x_2^*,\hat y^*)$ and the linearization \eqref{MatrixA} is asymptotically \textit{vertical} (\textit{horizontal}) for $\beta \delta \Omega^{-1} \mu_2\rightarrow \pm \infty$. 
\end{proof}

{The results of \lemmaref{eqStudyChartK2} are shown in \figref{lemma8_1}, which can be compared with \figref{prop5_2}. }
\begin{figure}\begin{center}\includegraphics[width=.95\textwidth]{./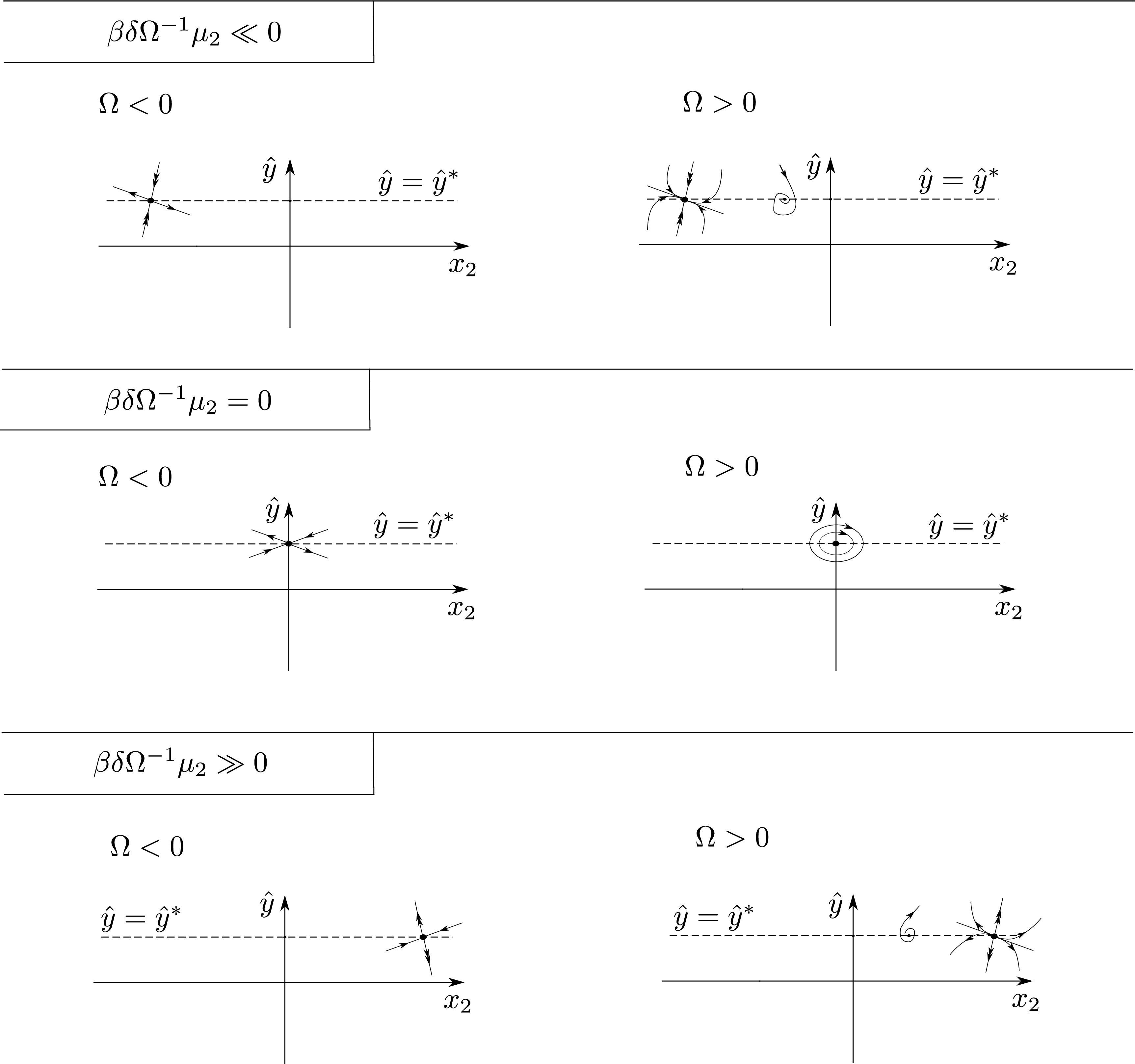}
\caption{Results of \lemmaref{eqStudyChartK2}. 
The dashed line corresponds to $\hat y^*=w^{-1}(-\alpha^{-1}\delta$), from \eqref{eqx2w}. This figure can be compared with \figref{prop5_2}.}\figlab{lemma8_1}
                		\end{center}
              \end{figure}

\subsection{Charts $\kappa_{1,3}$}
The results in \lemmaref{eqStudyChartK2} are in accordance with \propref{pwsProposition} in the limits $\mu_2\rightarrow \pm \infty$. But they occur within chart $\kappa_2$ and everything collapses to $\mu=0$ for $r_2=0$, see \eqref{eqChartKappa2}. To connect the results in chart $\kappa_2$ with the case $\mu=\mathcal O(1)$, we can consider charts $\kappa_{1,3}$. We obtain the following:
\begin{lemma}\lemmalab{equilibriaKappa13}
The equilibria described in \lemmaref{eqStudyChartK2} belong to the same smooth, locally unique family of equilibria as those in \eqref{EqSlowManifold}. The equilibria do not undergo any further bifurcations in passing from $\Lambda_2^\pm$ to $\Lambda^{\pm}$. 
\end{lemma}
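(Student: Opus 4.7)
The strategy is to find the equilibria in the phase directional charts $\kappa_1$ and $\kappa_3$, verify via the transition maps $\kappa_{21}$ and $\kappa_{23}$ that they form a single smooth branch together with both the chart $\kappa_2$ equilibria of \lemmaref{eqStudyChartK2} and the Fenichel family \eqref{eq:EqSlowManifold}, and then exclude any further bifurcations by combining the normal hyperbolicity of the center manifolds $M_{a,1}$, $M_{r,3}$ with the hyperbolicity of the underlying pseudo-equilibrium from \propref{pwsProposition}.

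For the first step, in chart $\kappa_1$ (equations \eqref{eq:EqsChartKappa1}) the conditions $\dot r_1 = \dot\epsilon_1 = \dot \mu_1 = 0$ collapse, for $r_1,\epsilon_1>0$, to $F_1 \equiv \delta + \alpha w(\hat y) + \mathcal{O}(r_1) = 0$, while $\dot{\hat y} = 0$ gives $\beta(1+\mu_1)w(\hat y) = 1+\mathcal{O}(r_1)$. These are solvable when $\alpha\delta<0$ and, using the implicit function theorem with non-degeneracy supplied by $\Omega\ne 0$ (\asuref{asu3}) and $w'<0$, define a smooth, locally unique equilibrium curve with $\mu_1^\ast=-\Omega/(\beta\delta)+\mathcal{O}(r_1)$ and $\hat y^\ast=w^{-1}(-\alpha^{-1}\delta)+\mathcal{O}(r_1)$. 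Translating to the original coordinates via $x=-r_1$, $\mu=r_1\mu_1$ recovers $x^\ast=\beta\delta\Omega^{-1}\mu+\mathcal{O}(\mu^2)$ in agreement with \eqref{eq:EqSlowManifold}. An identical computation in $\kappa_3$ yields $\mu_3^\ast=\Omega/(\beta\delta)+\mathcal{O}(r_3)$ and the same global family. Applying the chart transitions \eqref{eq:kappa12} and \eqref{eq:kappa32} identifies the $\kappa_{1,3}$-equilibria at the interface sections $\epsilon_{1}=\nu^2$ and $\epsilon_3=\nu^2$ with the $\kappa_2$-equilibria at $\Lambda_2^\mp$ recorded in \eqref{eq:eqx2wIntersection}, giving a single $C^k$ branch reaching from $\Lambda^\pm$ through the blow-up to $\Lambda_2^\pm$.

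To rule out bifurcations between $\Lambda_2^\pm$ and $\Lambda^\pm$, I observe that in each of $\kappa_1, \kappa_3$ the linearization at the equilibrium has a fast eigenvalue of order unity, namely $\beta(1+\mu_1^\ast)w'(\hat y^\ast)=-(\alpha/\delta)w'(\hat y^\ast)$ in $\kappa_1$ (and its sign-reversed analogue in $\kappa_3$), which is strictly non-zero and of definite sign since $w'<0$ and $\alpha\delta<0$; this encodes the normal attraction of $M_{a,1}$ and normal repulsion of $M_{r,3}$. The complementary slow eigenvalue, by \thmref{criticalManifold}, coincides with the linearization of the sliding equation \eqref{eq:slidingEqns2} at the pseudo-equilibrium \eqref{eq:xps}, which is non-zero by \propref{pwsProposition} under \asuref{asu3}. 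The equilibrium is therefore uniformly hyperbolic throughout $r_i\in(\sqrt\epsilon/\nu,\rho)$, so no bifurcation can occur. The focus-to-node transition at $|\mu_2|=\mu_{2,F}$ in chart $\kappa_2$ is merely a change from complex to real eigenvalues and preserves the topological type, matching the nodal character carried by the branch in $\kappa_{1,3}$. The main technical point is propagating the branch smoothly across all three charts while keeping the non-degeneracy uniform in $r_i$; this is handled by \asuref{asu3} together with the strict monotonicity of $w$.
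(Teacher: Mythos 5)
Your proposal is correct and follows essentially the same route as the paper: locate the equilibria in the phase-directional charts $\kappa_{1,3}$, identify them across charts via the transition maps and the conservation of $\epsilon$ and $\mu$, and connect them to the chart-$\kappa_2$ equilibria and the Fenichel-regime family \eqref{EqSlowManifold}. You additionally make explicit the hyperbolicity argument (nonvanishing fast eigenvalue from normal attraction/repulsion of $M_{a,1}$, $M_{r,3}$ and nonvanishing slow eigenvalue governed by $\Omega\ne 0$) that the paper leaves implicit in its ``straightforward to trace'' step, and your value $\mu_1^\ast=-\beta^{-1}\delta\,\Omega+\mathcal O(r_1)$ is the correct one (the paper's displayed $-\beta\delta\Omega^{-1}$ appears to be a typo, as one checks against \eqref{eqx2wIntersection} and the transition $\kappa_{12}$).
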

\begin{proof}
In chart $\kappa_1$, we find the following family of equilibria:
 \begin{align}
  \hat y &= w^{-1}(-\alpha^{-1} \delta )+ \mathcal O(r_1+\epsilon_1),\nonumber\\
 \mu_1 &= -\beta \delta \Omega^{-1} +\mathcal O(r_1+\epsilon_1),\nonumber
 \end{align}
 within $M_{a,1}$. Using the conservation of $\epsilon$ and $\mu$ it is straightforward to trace this family of equilibria from $\Lambda_{1}^{-}$ to $\Lambda^{-}$ and connect them to the equilibria described in chart $\kappa_2$ with the $\mathcal O(1)$ equilibria described in \eqref{EqSlowManifold}. The analysis in chart $\kappa_3$ is identical. 
\end{proof}

 \subsection{The Hopf bifurcation for $\Omega>0$}\seclab{Hopf}
We now describe in further detail the Hopf bifurcation for $\Omega>0$ in \lemmaref{eqStudyChartK2} and the resulting birth of limit cycles. As mentioned at the start of this section, we return to the time $\tau$, since time $\tilde \tau$ defined in \eqref{tildeTau} and used in \secref{canards} leads to difficulties when the periodic orbits leave the region of regularization $y\in (-\epsilon,\epsilon)$, or $\hat y\in (-1,1)$ (whereas canards do not suffer this fate). To proceed, we write the extended problem \eqref{ExtendedProblem1} in chart $\kappa_2$ to obtain
\begin{align}
 \dot x_2 &=(\delta+r_2\zeta^+ x_2+\mathcal O(r_2(\mu_2+r_2)))(1+\phi(\hat y))+(\alpha+r_2\zeta^-x_2 +\mathcal O(r_2(\mu_2+r_2)))(1-\phi(\hat y)),\eqlab{eqnChartK2y2}\\
\dot{\hat y} &=(x_2+r_2\eta^+ x_2^2+\mathcal O(r_2(\mu_2+r_2))))(1+\phi(\hat y)) \nonumber\\
 &+(-\beta (x_2-\mu_2) +r_2 \eta^-(x_2-\mu_2)^2+\mathcal O(r_2(\mu_2+r_2)))(1-\phi(\hat y).\nonumber
\end{align}
where we have multiplied time $\tau$ by $r_2$. The equilibrium in \eqref{eqx2w} for $\alpha \delta<0$ then becomes 
\begin{align}
(x_2^*,\hat y^*)\equiv \left(\beta \delta \Omega^{-1}\mu_2,\,\phi^{-1}\left(\frac{1+\alpha^{-1} \delta}{1-\alpha^{-1}\delta}\right)\right)+\mathcal O(r_2).\eqlab{eqx2y2}
\end{align}
This equilibrium undergoes a Hopf bifurcation at $\mu_2=\mu_{2,H}=\mathcal O(r_2)$ when $\Omega>0$ (compare with \lemmaref{eqStudyChartK2}).  Let
\begin{align}
 \phi_{i,H} \equiv \phi^{(i)}(\hat y_{0}^*),\quad i=1,2,3,\eqlab{phi_i}
\end{align}
where 
\begin{align}
\hat y_{0}^*=\phi^{-1}\left(\frac{1+\alpha^{-1}\delta}{1-\alpha^{-1}\delta}\right),\eqlab{eqy20Star}
\end{align}
and
\begin{align*}
 \phi^{(i)} \equiv \frac{d^i\phi}{d\hat y^i}.
\end{align*}
The subscript $0$ in \eqref{eqy20Star} is used to emphasize that $\hat y^*$ has been obtained from  \eqref{eqx2y2} with $\mu_2=r_2=0$. By assumption $\phi_{1,H}>0$
 and so we obtain
\begin{proposition}\proplab{HopfBifurcationChartKappa2}
 System \eqref{eqnChartK2y2} has an equilibrium at $(x_2^*,\hat y^*)$, as defined in \eqref{eqx2y2}, which undergoes a Hopf bifurcation at $$\mu_2 = \mu_{2,H}(r_2)$$ where
 \begin{align}
 \mu_{2,H}(r_2) \equiv \left(\frac{2(\alpha (\zeta^++\chi^+)-\delta (\zeta^-+\chi^-)) \Omega}{(\alpha-\delta)^2 \phi_{1,H}}-{(\chi^-+\beta\chi^+)\hat y_{0}^*}\right)\beta^{-1}r_2+\mathcal O(r_2^2).\eqlab{mu2H}
 \end{align}
 The first Lyapunov coefficient is given by
\begin{align*}
 a &= a_2 r_2+\mathcal O(r_2^2),
 \end{align*}
 where
\begin{align}
a_2 &=\frac{(\alpha-\delta)\phi_{1,H}^2}{16\Omega^2}\bigg({(\beta +1)^2}\left(\delta \zeta^--\alpha \zeta^+\right)-{(\alpha-\delta)^2}\left(\eta^-+\beta \eta^+\right)\bigg)+\frac{(\beta+1)\phi_{2,H}}{16\Omega}(\delta\chi^--\alpha\chi^+)\nonumber\\
&+\frac18\left(\frac{\phi_{3,H}}{\phi_{1,H}(\alpha-\delta)}-\frac{\phi_{2,H}^2}{\phi_{1,H}^2(\alpha-\delta)}-\frac{(\beta+1)\phi_{2,H}}{\Omega}\right)\left(\delta (\zeta^-+\chi^-)-\alpha (\zeta^++\chi^+)\right).
%
%
\eqlab{K2}
 \end{align}
If $a_2\ne 0$ then for $\epsilon$ sufficiently small there exists a family of unique periodic solutions bifurcating from $(x_2^*,\hat y^*)$ for 
\begin{align*}
& \beta \delta \Omega^{-1} \mu_2 > \beta \delta \Omega^{-1}\mu_{2,H} \quad \text{when}\quad a_2<0,\\
& \beta \delta \Omega^{-1} \mu_2 < \beta \delta \Omega^{-1}\mu_{2,H} \quad \text{when}\quad a_2>0,
\end{align*}
with amplitude $\mathcal O\left(\sqrt{\vert \mu_2-\mu_{2,H}\vert r_2^{-1}}\right)$. The periodic orbits are attracting for $a_2<0$ and repelling for $a_2>0$, for $r_2$ sufficiently small. 
\end{proposition}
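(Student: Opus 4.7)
The plan is to proceed in three stages: locate the Hopf curve $\mu_2 = \mu_{2,H}(r_2)$ by an implicit function argument applied to the trace of the Jacobian, verify the non-degeneracy conditions of the Andronov–Hopf theorem, and compute the first Lyapunov coefficient by reducing \eqref{eqnChartK2y2} to normal form.

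For the first stage, let $T(\mu_2,r_2) := \operatorname{tr} DF(x_2^*,\hat y^*;\mu_2,r_2)$, where $F$ is the right-hand side of \eqref{eqnChartK2y2} and the equilibrium $(x_2^*,\hat y^*)$ is given by \eqref{eqx2y2}. From \lemmaref{eqStudyChartK2} together with \eqref{trA} we have $T(0,0)=0$ and $\partial_{\mu_2}T(0,0) = \alpha\beta\Omega^{-1}w'(-\alpha^{-1}\delta)\ne 0$. The implicit function theorem yields a unique smooth solution $\mu_{2,H}(r_2)$ of $T=0$ near $r_2=0$ with $\mu_{2,H}(0)=0$. The leading coefficient in $r_2$ is $\mu_{2,H}'(0) = -\partial_{r_2}T(0,0)/\partial_{\mu_2}T(0,0)$; expanding $F$ to first order in $r_2$ (which brings in the coefficients $\zeta^\pm, \chi^\pm$), evaluating the Jacobian at the $r_2$-corrected equilibrium, taking the trace and simplifying gives \eqref{mu2H}. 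Simultaneously, $\det A|_{\mu_2=\mu_{2,H}, r_2=0} = -\Omega\, w'(-\alpha^{-1}\delta) > 0$ when $\Omega>0$, so the eigenvalues at the bifurcation are purely imaginary $\pm i\omega$ with $\omega = \sqrt{-\Omega\, w'(-\alpha^{-1}\delta)}$, and the transversality condition $\partial_{\mu_2}T(\mu_{2,H}(r_2),r_2)\ne 0$ persists by continuity.

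For the second stage, translate the equilibrium to the origin and apply a linear change of coordinates to put the linearization in the Hopf normal form $\bigl(\begin{smallmatrix}0 & -\omega\\ \omega & 0\end{smallmatrix}\bigr)$. Then expand $F$ to third order in the shifted variables, using the Taylor expansion
\begin{align*}
\phi(\hat y) = \phi(\hat y_0^*) + \phi_{1,H}\xi + \tfrac12 \phi_{2,H}\xi^2 + \tfrac16 \phi_{3,H}\xi^3 + \cdots, \quad \xi := \hat y - \hat y_0^*,
\end{align*}
together with the polynomial structure of $F$ in $x_2$ and the $r_2$-dependence from the coefficients $\zeta^\pm, \eta^\pm, \chi^\pm$. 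Applying the standard formula for the first Lyapunov coefficient of a planar Hopf bifurcation (e.g.\ Kuznetsov, \emph{Elements of Applied Bifurcation Theory}, \S3.5) gives $a$ as a polynomial in the quadratic and cubic coefficients of the transformed vector field. At $r_2=0$ these vanish identically because $F$ at $r_2=0$ has no cubic dependence in $x_2$ and the only source of nonlinearity is $\phi$ itself, whose contribution cancels at this order (reflecting the fact that $\mu_2=0$ is a center-like configuration for the leading-order system). Hence $a = a_2 r_2 + O(r_2^2)$, and $a_2$ is obtained by collecting the three classes of $O(r_2)$ contributions: (a) the cubic $\phi_{3,H}$-term, (b) mixed quadratic–quadratic products involving $\phi_{2,H}$ with the $r_2$-linear entries $\zeta^\pm, \eta^\pm, \chi^\pm$, and (c) shifts in the quadratic Taylor coefficients arising from the $O(r_2)$ displacement of the equilibrium. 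Collecting gives \eqref{K2}.

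The third stage is routine: with $a_2\ne 0$, the Andronov–Hopf theorem produces a unique $C^k$-smooth family of periodic orbits bifurcating from $(x_2^*,\hat y^*)$ for $\mu_2$ on the side of $\mu_{2,H}$ determined by $\operatorname{sign}(a\,\partial_{\mu_2}T)$, with amplitude $\sim\sqrt{|\mu_2-\mu_{2,H}|/|a|} = \mathcal O\bigl(\sqrt{|\mu_2-\mu_{2,H}|\,r_2^{-1}}\bigr)$ since $a = a_2 r_2 + O(r_2^2)$; stability is determined by the sign of $a_2$. The main obstacle is the computation of $a_2$, since one must carefully propagate the $O(r_2)$ perturbations and the equilibrium shift through the normalizing linear transformation and the standard Lyapunov formula, keeping track of which contributions survive at leading order in $r_2$; the explicit answer \eqref{K2} is what makes the dependence on the regularization function $\phi$, through $\phi_{1,H}, \phi_{2,H}, \phi_{3,H}$, transparent and sets up Theorem~\ref{K2sign}.
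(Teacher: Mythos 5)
Your proposal is correct and follows essentially the same route as the paper, whose proof simply invokes classical Hopf bifurcation theory for the trace/transversality conditions and the (unshown) normal-form computation of $a_2$. The only stylistic difference is that the paper justifies the degeneracy $a\equiv 0$ at $r_2=0$ by exhibiting the explicit Hamiltonian (time-reversible) structure of \eqref{mu2r2eqn}, whereas you argue the same cancellation somewhat more loosely via the center-like character of the leading-order system.
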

		\begin{proof}
The calculation of $a_2$ is based on classical Hopf bifurcation theory \cite{car1}. 
\end{proof}  

 Note that the Hopf bifurcation is degenerate within $r_2=0$ since $a\equiv 0$ there. The reason for this is that the system \eqref{eqnChartK2y2} with $\mu_2=0$ and $r_2=0$ is Hamiltonian, as we shall now demonstrate. In this case, \eqref{eqnChartK2y2} becomes: 
\begin{align}
 \dot x_2 &=\delta(1+\phi(\hat y)) +\alpha (1-\phi(\hat y)),\eqlab{mu2r2eqn}\\
\dot{\hat y} &=\left( 1+\phi(\hat y)-\beta (1-\phi(\hat y))\right)x_2,\nonumber
 \end{align}
The Hamiltonian function $H=H(x_2, \hat y)$ is given by
 \begin{align}
 H(x_2, \hat y) = \frac12 x_2^2+\int_{\hat y_{0}^*}^{\hat y} \frac{\delta (1+\phi(s))+\alpha (1-\phi(s))}{\beta (1-\phi(s))-(1+\phi(s))}ds.\eqlab{HchartK2}
 \end{align}
The symplectic structure matrix $J(x_2, \hat y)$ is non-canonical: 
\begin{align}
J(x_2, \hat y)=(\beta (1-\phi(\hat y))-(1+\phi(\hat y)))\begin{pmatrix}
                                        0 & 1\\
                                        -1 & 0
                                       \end{pmatrix},\eqlab{J}
                                       \end{align}
which is regular and non-zero near $\hat y=\hat y_{0}^*$ since $\phi(\hat y_{0}^*) = \frac{1+\alpha^{-1}\delta}{1-\alpha^{-1}\delta}$, $\alpha\delta<0$ and \asuref{asu3}. Hence the system with $\mu_2=0$ and $r_2=0$ has a whole family of periodic orbits in the vicinity of \eqref{eqx2y2}$_{\mu_2=r_2=0}$. The Hamiltonian system is not well-defined for $\hat y=\hat y_{c}$ \eqref{phiy2c}, $\beta>0$,  since $J(x_2,\hat y_{c})=0$.  
\begin{remark}
The periodic orbits within the $(x_2,\hat y)$-plane rotate about \eqref{eqx2y2} in the counter clockwise (clockwise) direction if $\alpha>0$ ($\alpha<0$).
\end{remark}

Combining the results in \lemmaref{eqStudyChartK2}, \lemmaref{equilibriaKappa13}, and \propref{HopfBifurcationChartKappa2} we obtain one of our main results, Theorem~\ref{mainHopf}, as follows:

\begin{theorem}\label{mainHopf}
Assuming \eqref{eqAssymption}, the regularized system  \eqref{Xeps} has a smooth and locally unique family of equilibria
\begin{align*}
(x,y)=(x^*,y^*)(\mu,\sqrt{\epsilon}) \equiv \left(\beta \delta\Omega^{-1}\mu+\mathcal O(\mu^2+\epsilon),\mathcal O(\epsilon)\right)
\end{align*}
where $(x^*(\mu,0),y^*(\mu,0))=(x^*(\mu,0),0)$ agrees with the family of pseudo-equilibria for the PWS system (see \propref{pwsProposition}) and, in particular, $\partial_\mu x^*(\mu,0)\ne 0$. For
\begin{itemize}
 \item[$\Omega<0$:] The family of equilibria consists of saddles and does not undergo any bifurcation. 
 \item[$\Omega>0$:] The family of equilibria undergoes a Hopf bifurcation at $\mu=\sqrt{\epsilon}\mu_{2,H}(\sqrt{\epsilon})=\mathcal O(\epsilon)$, where $\mu_{2,H}$ is given by \eqref{mu2H}. The first Lyapunov coefficient is given by $a = a_2 \sqrt{\epsilon}+\mathcal O(\epsilon)$
where $a_2$, given by \eqref{K2}, depends upon the regularization function $\phi$.
 \end{itemize}
\end{theorem}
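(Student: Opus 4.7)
The proof is a direct assembly of the three preceding results (\lemmaref{eqStudyChartK2}, \lemmaref{equilibriaKappa13}, and \propref{HopfBifurcationChartKappa2}), together with the Fenichel-type description of equilibria away from the two-fold. The plan is to patch together the $\mu = \mathcal O(1)$ and $\mu = \mathcal O(\sqrt{\epsilon})$ regimes, since these are covered by different charts of the blowup and require different tools.

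First I would establish the existence and local uniqueness of the family $(x^*, y^*)(\mu, \sqrt{\epsilon})$. For $|\mu| \geq \mu_0 > 0$, equation \eqref{EqSlowManifold} together with the implicit function theorem on \eqref{ExtendedProblem1W} produces a smooth equilibrium branch, valid for $0 < \epsilon \leq \epsilon_0(\mu_0)$; the requirement $\alpha\delta < 0$ is exactly \eqref{conditionPseudo}, which guarantees that the preimage under $w^{-1}$ in the $\hat y$-component is well-defined. For $|\mu| = \mathcal O(\sqrt{\epsilon})$, the chart $\kappa_2$ equation \eqref{eqx2w} gives the corresponding branch, parametrized by $(\mu_2, r_2) = (\mu/\sqrt{\epsilon}, \sqrt{\epsilon})$. \lemmaref{equilibriaKappa13} then shows that these two families coincide on their overlap and together form a single smooth, locally unique family; returning to the original $(x,y)$ coordinates via $y = \epsilon \hat y$ converts $\hat y^* = \mathcal O(1)$ into $y^* = \mathcal O(\epsilon)$. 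The nondegeneracy $\partial_\mu x^*(\mu,0) \ne 0$ is immediate from the leading coefficient $\beta\delta\Omega^{-1}$ and \asuref{asu3}, and the identification with the PWS pseudo-equilibria at $\epsilon = 0$ follows from \propref{pwsProposition}.

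For the case $\Omega < 0$, I would combine the saddle statement in \lemmaref{eqStudyChartK2} (valid on $\Lambda_2^{\pm}$) with the Fenichel description in \secref{loss}: on $S_{a,\epsilon}$ and $S_{r,\epsilon}$, the flow is $\mathcal O(\epsilon)$-close to the sliding vector field, whose pseudo-equilibria are saddles by \propref{pwsProposition} when $\Omega < 0$. \lemmaref{equilibriaKappa13} asserts that no additional bifurcations occur in the transition from $\Lambda_2^{\pm}$ to $\Lambda^{\pm}$, so the equilibrium retains saddle type throughout the whole family. For the case $\Omega > 0$, \propref{HopfBifurcationChartKappa2} provides a Hopf bifurcation at $\mu_2 = \mu_{2,H}(r_2)$ in chart $\kappa_2$ with the first Lyapunov coefficient $a = a_2 r_2 + \mathcal O(r_2^2)$, where $a_2$ is given by \eqref{K2}; pulling back through the chart transformation $\mu = r_2 \mu_2$, $\epsilon = r_2^2$ yields $\mu = \sqrt{\epsilon}\mu_{2,H}(\sqrt{\epsilon}) = \mathcal O(\epsilon)$ and $a = a_2 \sqrt{\epsilon} + \mathcal O(\epsilon)$, as claimed.

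The main obstacle, which is already dispensed with by \lemmaref{equilibriaKappa13} and the Fenichel analysis of \secref{loss}, is the uniform matching of the local (blown-up) and outer (Fenichel) descriptions of the equilibrium family across the degenerate line $\tilde q$ — i.e., verifying that no extraneous branch, and no additional bifurcation (saddle-node, pitchfork, Bogdanov--Takens, etc.), arises in the crossover region $r_2 \in (0, \rho)$. Given that the determinant $\det A$ in \eqref{detA} is independent of $\mu_2$ and has constant sign on the branch, while $\operatorname{tr} A$ in \eqref{trA} is linear in $\mu_2$, the only codimension-one bifurcation compatible with the reduced local model is indeed the Hopf one identified in \propref{HopfBifurcationChartKappa2}, and this closes the argument.
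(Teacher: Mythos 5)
Your proposal is correct and follows essentially the same route as the paper, which proves Theorem \ref{mainHopf} precisely by combining \lemmaref{eqStudyChartK2}, \lemmaref{equilibriaKappa13} and \propref{HopfBifurcationChartKappa2} with the Fenichel/implicit-function-theorem description \eqref{EqSlowManifold} away from the two-fold. Your added detail on gluing the $\mu=\mathcal O(1)$ and $\mu=\mathcal O(\sqrt{\epsilon})$ regimes and on pulling $\mu_{2,H}$ and $a_2$ back through the chart $\kappa_2$ scaling is exactly the bookkeeping the paper leaves implicit.
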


Note that $a_2$, as given in \eqref{K2}, depends upon the regularization function $\phi$, through $\phi_{1,H}$, $\phi_{2,H}$ and $\phi_{3,H}$ as defined in \eqref{phi_i}, and hence that the criticality of the Hopf bifurcation depends on $\phi$. This observation leads to another one of our main results:
 \begin{theorem}\label{K2sign}
  Suppose that $\Omega>0$ and $\alpha\delta<0$ so that there exists a Hopf bifurcation. Then, provided
  \begin{align}
   \delta (\zeta^-+\chi^-)-\alpha (\zeta^++\chi^+)\ne 0, \eqlab{assumption1}
  \end{align}
 all cases $a_2<0$, $a_2=0$ and $a_2>0$ can be attained by varying $\phi$. 
 \end{theorem}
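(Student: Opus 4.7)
My approach is to view the first Lyapunov coefficient $a_2$ in \eqref{K2} as a function of the $3$-jet $(\phi_{1,H},\phi_{2,H},\phi_{3,H})$ of the regularization function $\phi$ at the bifurcation point $\hat y_0^*\in(-1,1)$ (recall \eqref{phi_i} and \eqref{eqy20Star}), and to exploit the fact that under assumption \eqref{assumption1} this dependence is affine in $\phi_{3,H}$ with nonzero slope. Once this is established, surjectivity onto $\mathbb{R}$ is automatic, and the remaining task is purely one of realizing the required jets by admissible regularization functions.

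First I would inspect the three summands of \eqref{K2} and observe that $\phi_{3,H}$ appears only in the last summand, with coefficient
\begin{equation*}
\frac{1}{8\,\phi_{1,H}(\alpha-\delta)}\bigl(\delta(\zeta^-+\chi^-)-\alpha(\zeta^++\chi^+)\bigr).
\end{equation*}
This coefficient is nonzero: the second factor by assumption \eqref{assumption1}, and the first factor because $\phi_{1,H}>0$ by \eqref{phiProperties} and $|\alpha-\delta|\ge 1$ (since $\delta=\pm1$ and $\alpha\delta<0$ force $\alpha$ and $\delta$ to have opposite signs). Hence, with $\phi_{1,H}>0$ and $\phi_{2,H}\in\mathbb{R}$ fixed, the map $\phi_{3,H}\mapsto a_2$ is an affine bijection $\mathbb{R}\to\mathbb{R}$, and in particular takes any prescribed negative, zero, or positive value.

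The remaining step is to prove a \emph{jet-realization lemma}: for every triple $(p_1,p_2,p_3)\in\mathbb{R}_{>0}\times\mathbb{R}\times\mathbb{R}$ there exists a $C^k$-function $\phi$ satisfying \eqref{phiFunc} and \eqref{phiProperties} such that $\phi^{(i)}(\hat y_0^*)=p_i$ for $i=1,2,3$. I would construct this by first choosing a base function $\phi_0$ (for example a rescaled $\tanh$, cut off smoothly near $\hat y=\pm1$) and then adding a perturbation of the form
\begin{equation*}
\phi(\hat y)=\phi_0(\hat y)+\lambda\,\psi\!\left(\tfrac{\hat y-\hat y_0^*}{\lambda}\right)\bigl(c_1(\hat y-\hat y_0^*)+c_2(\hat y-\hat y_0^*)^2+c_3(\hat y-\hat y_0^*)^3\bigr),
\end{equation*}
where $\psi$ is a smooth bump with $\psi(0)=1$, compactly supported in $(-1,1)$, and $\lambda>0$ is a small scale. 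Choosing $c_1,c_2,c_3$ to match the prescribed derivatives is a linear $3\times 3$ problem with invertible coefficient matrix. Outside the support of the bump, $\phi$ coincides with $\phi_0$, so the boundary conditions $\phi=\pm1$ at $\hat y=\pm1$ are untouched.

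The only real obstacle is preserving the monotonicity constraint $\phi'>0$ on all of $(-1,1)$ after perturbation, since prescribing $\phi_{2,H}$ and $\phi_{3,H}$ independently could in principle force the perturbation to have a large derivative in the bump region. This is handled by exploiting the free parameter $\lambda$: the derivative of the polynomial factor times the bump is bounded by a constant depending on $c_1,c_2,c_3$ and on $\|\psi'\|_\infty$, independent of $\lambda\in(0,1]$, while the target derivatives $p_i$ can always be achieved for any $\lambda$ (up to rescaling the $c_i$ appropriately). Using the strict positivity $\inf_{[-1+\eta,1-\eta]}\phi_0'>0$ on a fixed subinterval, one picks $\lambda$ small enough that the support of the bump lies in this subinterval and the perturbation is $C^1$-small enough not to destroy $\phi'>0$. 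Combining the jet-realization lemma with the affine surjectivity established above yields the three cases $a_2<0$, $a_2=0$, $a_2>0$ claimed in Theorem~\ref{K2sign}.
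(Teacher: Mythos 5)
Your proposal is correct and its core coincides with the paper's own proof: the paper's argument is exactly your first observation, namely that by \eqref{K2} the coefficient $a_2$ is affine in $\phi_{3,H}$ with coefficient $\bigl(\delta(\zeta^-+\chi^-)-\alpha(\zeta^++\chi^+)\bigr)/\bigl(8\phi_{1,H}(\alpha-\delta)\bigr)\ne 0$ under \eqref{assumption1} (and $\alpha\ne\delta$, $\phi_{1,H}>0$), so all three signs are attainable; the paper simply asserts "we are free to choose $\phi_{1,H},\phi_{2,H},\phi_{3,H}$" without constructing $\phi$, which your jet-realization lemma attempts to supply. The only soft spot is in that lemma as stated for arbitrary triples $(p_1,p_2,p_3)$: matching the jet forces $c_1\sim(p_1-\phi_0'(\hat y_0^*))/\lambda$, so the term $\psi'\bigl((\hat y-\hat y_0^*)/\lambda\bigr)\,c_1(\hat y-\hat y_0^*)$ in $\phi'$ is of size $O\bigl(|p_1-\phi_0'(\hat y_0^*)|\bigr)$ uniformly in $\lambda$ (and similarly for $p_2$); the perturbation is therefore not $C^1$-small as $\lambda\to 0$, and monotonicity is not preserved "for $\lambda$ small enough" by the argument you give. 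This gap is immaterial for Theorem~\ref{K2sign}, since only $\phi_{3,H}$ needs to be varied: taking $c_1=c_2=0$ and $c_3=t_3/(6\lambda)$ leaves $\hat y_0^*$, $\phi_{1,H}$, $\phi_{2,H}$ and the boundary values unchanged, perturbs $\phi'$ only by $O(|t_3|\lambda^2)$ so that \eqref{phiProperties} persists, and lets $\phi_{3,H}$ sweep all of $\mathbb{R}$, which together with the affine surjectivity completes the proof.
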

 
\begin{proof}
We are free to choose $\phi_{1,H}>0$, $\phi_{2,H}$ and $\phi_{3,H}$ in \eqref{K2}. Note, from \eqref{K2}, that the equation $a_2=0$ is linear in $\phi_{3,H}$ and that the coefficient of $\phi_{3,H}$:
\begin{align*}
  \frac{\delta (\zeta^-+\chi^-)-\alpha (\zeta^++\chi^+)}{8\phi_{1,H}(\alpha-\delta)}
\end{align*}
is non-zero by assumption \eqref{assumption1}. Hence $a_2=0$ can be solved for $\phi_{3,H}$. The statement of the proposition therefore follows.
\end{proof}

\begin{remark}
It seems natural to insist that $\phi$ should be an odd function. If $\phi$ were not odd, then one of the vector-fields $X^{\pm}$ would be favoured over the other by the regularization. The functions $\phi$ that we used to prove Theorem~\ref{K2sign} can be odd, at least if $\hat y_{0}^*\ne 0$. If $\hat y_{0}^*=0$ and $\phi$ is odd, then $\phi_{2,H}=0$. Hence, the equation $a_2=0$ should have a solution with $\phi_{3,H}<0$ for $\phi$ to be odd and for Theorem~\ref{K2sign} to apply.
\end{remark}

\begin{remark}
Another natural condition appears to be that $\phi'$ should be strictly increasing within $(-1,0)$ and strictly decreasing within $(0,1)$. The functions used in the proof of Theorem~\ref{K2sign} may also be chosen to satisfy these conditions, at least when $\hat y_{0}^*\ne 0$. The functions then just have $\phi_{2,H}\gtrless 0$ for $\hat y_{0}^*\lessgtr 0$.  For $\hat y_{0}^*=0$ we have $\phi_{2,H}=0$ and the equation $a_2=0$ should have a solution with $\phi_{3,H}<0$ to ensure $\phi''(\hat y)\gtrless 0$ for $\hat y\lessgtr 0$ and that Theorem~\ref{K2sign} apply. 
\end{remark}
\section{Limit cycles of the regularized system}\seclab{limitCycles}
From \secref{limitCyclesPWS}, limit cycles of the PWS system can exist for the case $II_2$, which occurs when
\begin{align}
 \delta = -1,\quad \alpha>0,\quad \beta<0.\eqlab{II2Assumption}
 \end{align}
 See also \tabref{tblPWS}. Limit cycles can also occur in the regularized version of case $VI_3$. This case occurs for 
\begin{align}
\delta=1, \quad \beta>-\alpha>0.\eqlab{VI3Assumption} 
\end{align}
Note that $\beta>0$ in \eqref{VI3Assumption} and hence from Theorem~\ref{canardTheorem} the regularization of $VI_3$ also possesses a canard which is $\mathcal O(\sqrt{\epsilon})$-close to the singular canard of the PWS system. 

The regularization of $II_2$ and $VI_3$, denoted by $II_2^\epsilon$ and $VI_3^\epsilon$ respectively, exhibit significant differences. In case $II_2^\epsilon$, the limit cycles eventually (for $\mu_2$ large enough) cross the region of regularization $\hat y \in (-1,1)$ from $\hat y>1$ to $\hat y<-1$ and back again. There is no sliding and hence no singular canards in the corresponding PWS case $II_2$. Thus there are no canards in the regularized case $II_2^\epsilon$. However, for $VI_3^\epsilon$, the resulting limit cycles interact with the slow manifolds and the maximal canard to produce a scenario almost identical to the canard explosion phenomenon in classical slow-fast theory \cite{krupa_relaxation_2001}.


In this section we present a comprehensive study of the regularized limit cycles that are due to the Hopf bifurcation in chart $\kappa_2$ (see \propref{HopfBifurcationChartKappa2}). In \secref{LCKappa2}, these limit cycles are followed, beyond the validity of the classical Hopf bifurcation theory, into large limit cycles in chart $\kappa_2$. In terms of the original $(x,y)$-variables, from \eqref{eqChartKappa2}, these periodic orbits are, however, still small, only extending $\mathcal O(\sqrt{\epsilon})$ in the $x$-direction and $\mathcal O(\epsilon)$ in the $y$-direction. To follow these orbits to $\mathcal O(1)$-size, and obtain a connection to the PWS system, we must use charts $\kappa_{1,3}$. 

In doing so, we use different techniques for cases $II_2^\epsilon$ and $VI_3^\epsilon$. We split the analysis into separate parts. In \secref{LCO1II2} we study limit cycles of $\mathcal O(1)$-size for case $II_2^\epsilon$ while \secref{LCO1VI3} contains the corresponding analysis for case $VI_3^\epsilon$. The connection of these $\mathcal O(1)$-limit cycles with the limit cycles in chart $\kappa_2$ for cases $II_2^\epsilon$ and $VI_3^\epsilon$ is shown in \secsref{LCCII2}{LCCVI3}, respectively.   



\subsection{Chart $\kappa_2$}\seclab{LCKappa2}
\propref{HopfBifurcationChartKappa2} only guarantees the existence of small periodic orbits for $\mu_2/r_2$ small within chart $\kappa_2$. To follow these periodic orbits within chart $\kappa_2$ for larger values of $\mu_2/r_2$ we follow the Melnikov-based approach of Krupa and Szmolyan \cite{krupa_relaxation_2001}. We will consider both $II_2^\epsilon$ and $VI_3^\epsilon$ in this section.

%

First we define $\hat y_0^h$, $\hat y_1^h$ and $\hat y_2^h$ as follows. Consider the forward solution $\sigma = (x_2,\hat y)(t)$ with initial condition $(0,\hat y_0^h)$ where the Hamiltonian $H$ in \eqref{HchartK2} takes the value $H(0,\hat y_0^h)=h>0$ and
\begin{align}
\hat y_0^h<\hat y^*_0.\eqlab{condy20}
\end{align}
The point $(0,\hat y_2^h)$ is then the second return of $\sigma$ to $x_2=0$ where
\begin{align*}
\hat y_2^h<\hat y^*_0.
\end{align*}
Similarly, we let $(0,\hat y_1^h)$ denote the first return of $\sigma$ to $x_2=0$ where 
\begin{align*}
 \hat y_1^h>\hat y_0^*.
\end{align*}
 The relevant quantities are shown in \figref{HopfCanardR2Mu2} for the case $VI_3^\epsilon$. Notice that by \eqref{VI3Assumption} and $\Omega>0$ it follows from \eqsref{l2Solution}{eqx2w} that the singular canard for $\mu_2=r_2=0$ is above the bifurcating equilibrium. This is illustrated in \figref{HopfCanardR2Mu2} by letting the continuation of $S_{a,\epsilon}$ and $S_{r,\epsilon}$ lie above $\hat y^*_0$. The case $II_2^\epsilon$ is similar but there are no slow manifolds in this case. 

Following \propref{HopfBifurcationChartKappa2} there exists an $h_0>0$ independent of $r_2$ so that for $0\le h\le h_0$ there exists a locally unique family of limit cycles parametrized by $\mu_2$ whose stability is determined by the sign of $a_2$ given in \eqref{K2}. We therefore take $h\ge h_0>0$ 
\begin{figure}\begin{center}\includegraphics[width=0.65\textwidth]{./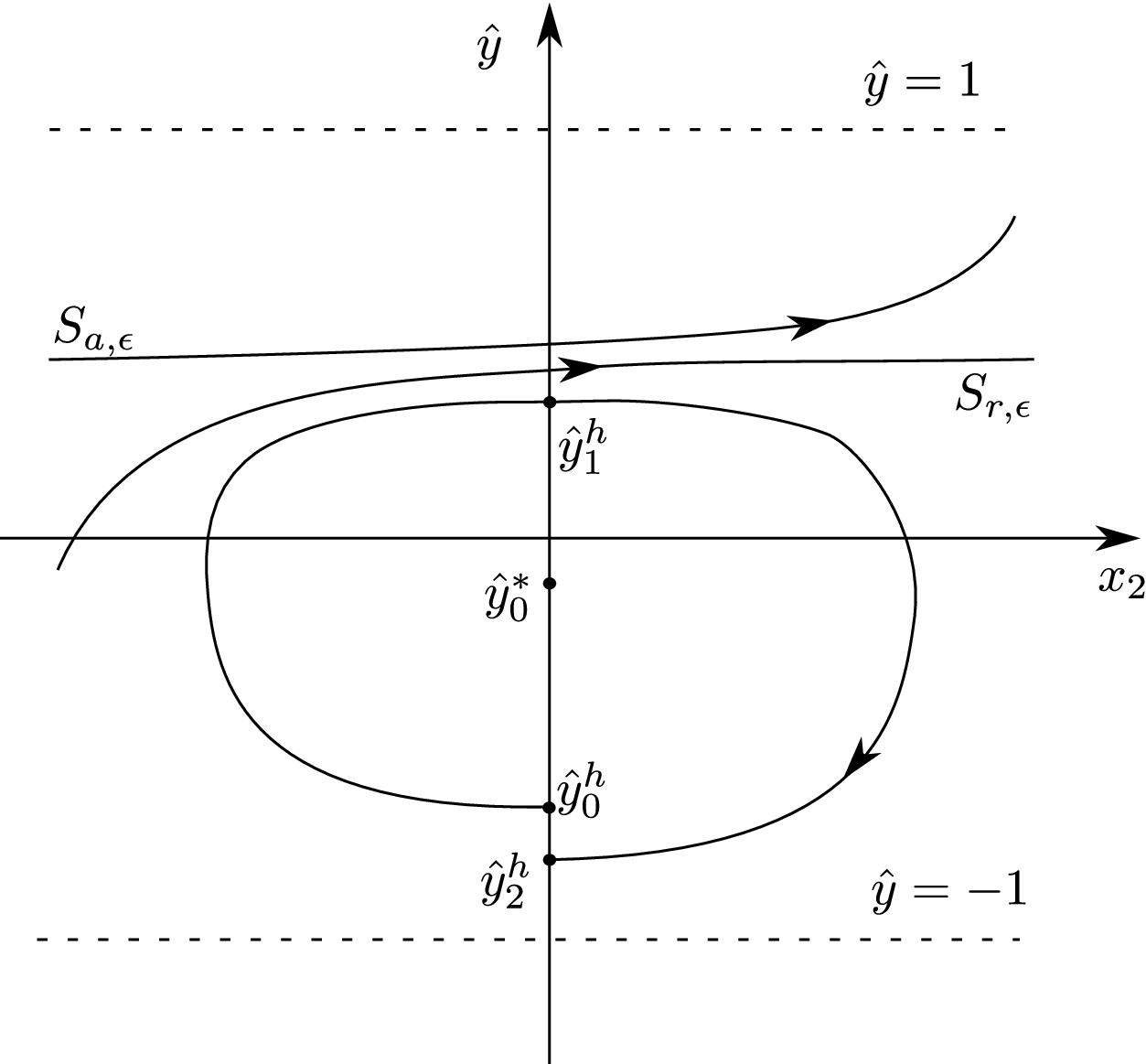}
\caption{Illustration of $\hat y_0^h$, $\hat y_1^h$ and $\hat y_2^h$ for the case $VI_3^\epsilon$. Here $S_{a,\epsilon}$ and $S_{r,\epsilon}$ are in fact the continuation $M_{a,2}(\epsilon)$ and $M_{r,2}(\epsilon)$ of the Fenichel slow manifolds into chart $\kappa_2$. The case $II_2^\epsilon$ is similar but there are no slow manifolds in this case. 
}\figlab{HopfCanardR2Mu2}
                		\end{center}
              \end{figure}
and consider the following distance function
    \begin{align}
     D(r_2,\mu_2,h)  = H(0,\hat y_2^h)-H(0,\hat y_0^h).\eqlab{DistanceFunction}
    \end{align}
From the analysis proceeding \propref{HopfBifurcationChartKappa2}, the system with $\mu_2=r_2=0$ is Hamiltonian and hence $D(0,0,h)=0$ for all $h\ge 0$. Also since $$\partial_{\hat y} H = \frac{\delta (1+\phi(\hat y))+\alpha (1-\phi(\hat y))}{\beta (1-\phi(\hat y))-(1+\phi(\hat y))}\ne 0,$$ for $\hat y< \hat y_{0}^*$ (in accordance with \eqref{condy20}), roots of the equation $D=0$ correspond to periodic orbits.  
    
Let $T^h$ denote the period of the orbit $\sigma$ of the Hamiltonian system with $\mu_2=r_2=0$ satisfying $H(\sigma)=h$.     
We have the following lemma, similar to \cite[Proposition 4.1]{krupa_relaxation_2001}:
\begin{lemma}
 For $h\ge h_0$ we have
 \begin{align}
  D(r_2,\mu_2,h) = D_{r_2}(h)r_2+D_{\mu_2}(h)\mu_2+\mathcal O((r_2+\mu_2)^2),\eqlab{Deqn}
 \end{align}
where
 \begin{align}
D_{r_2}(h) &=2\int_{0}^{T^h/2} {(\zeta^+(1+\phi(\hat y))+\zeta^- (1-\phi(\hat y)))x_2^2} \nonumber\\
&+ \frac{\delta(1+\phi(\hat y))+\alpha(1-\phi(\hat y))}{\beta(1-\phi(\hat y))-(1+\phi(\hat y))} ((\eta^+x_2^2+\chi^+\hat y)(1+\phi(\hat y))+(\eta^-x_2^2+\chi^-\hat y) (1-\phi(\hat y)))dt,\eqlab{Dr2}\\
D_{\mu_2}(h) &=-4\int_{\hat y_0^h}^{\hat y_1^h}   \frac{\beta \phi'(y)}{(\beta (1-\phi(\hat y))-(1+\phi(\hat y)))^2} x_2 d\hat y,\eqlab{Dmu2}
 \end{align}
where $(x_2,\hat y)(t)$ satisfies \eqref{mu2r2eqn} with $(x_2,\hat y)(0)=(0,\hat y_0^h)$, $H(x_2,\hat y)=h$ and $(x_2,\hat y)(T^h/2)=(0,\hat y_1^h)$. 
\end{lemma}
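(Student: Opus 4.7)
The plan is to treat $(r_2,\mu_2)$ as small parameters about the unperturbed Hamiltonian system \eqref{mu2r2eqn} and carry out a Melnikov-type expansion of $D$. First, $D(0,0,h)=0$: when $r_2=\mu_2=0$ the level set $H=h$ is a closed orbit encircling the center $(0,\hat y_{0}^*)$ (since $\Omega>0$ gives $V''(\hat y_{0}^*)>0$, with $J_{12}(\hat y)\equiv\beta(1-\phi(\hat y))-(1+\phi(\hat y))$ non-vanishing on the orbit), so the first and second returns to $\{x_2=0\}$ coincide. Since the full system is a smooth perturbation of this Hamiltonian system, $\dot H=\nabla H\cdot(r_2F_{r_2}+\mu_2F_{\mu_2})+O((r_2+\mu_2)^2)$ along the flow, where $F_{r_2}$ and $F_{\mu_2}$ are obtained by differentiating the right-hand side of \eqref{eqnChartK2y2} at $r_2=\mu_2=0$. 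Integrating along the unperturbed $T^h$-periodic orbit then yields $D_{r_2}$ and $D_{\mu_2}$ as Melnikov integrals, with the $O((r_2+\mu_2)^2)$ remainder coming from smooth dependence of orbits on parameters, uniformly on the compact $h$-range.

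Reading off the perturbations, $F_{r_2}$ has components $(\zeta^+(1+\phi)+\zeta^-(1-\phi))x_2$ and $(\eta^+x_2^2+\chi^+\hat y)(1+\phi)+(\eta^-x_2^2+\chi^-\hat y)(1-\phi)$, while $F_{\mu_2}=(0,\beta(1-\phi))$. With $\nabla H=(x_2,V'(\hat y))$ and $V'(\hat y)=(\delta(1+\phi)+\alpha(1-\phi))/J_{12}(\hat y)$, the contribution $\nabla H\cdot F_{r_2}$ is exactly the integrand of \eqref{Dr2}; the reduction from $[0,T^h]$ to $[0,T^h/2]$ with a prefactor of $2$ comes from the reflection symmetry $(x_2,\hat y)\mapsto(-x_2,\hat y)$ of the level sets of $H$, together with the fact that the integrand depends on $x_2$ only through $x_2^2$. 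For $D_{\mu_2}$, the raw Melnikov integrand $V'(\hat y)\beta(1-\phi)$ is a function of $\hat y$ alone; converting to a $\hat y$-integral via $dt=-d\hat y/(x_2J_{12})$ and exploiting the same symmetry produces
\[D_{\mu_2}(h)=-2\int_{\hat y_0^h}^{\hat y_1^h}\frac{V'(\hat y)\,\beta(1-\phi(\hat y))}{x_2\,J_{12}(\hat y)}\,d\hat y.\]
An integration by parts using $V'(\hat y)\,d\hat y=-x_2\,dx_2$ (from $x_2^2/2+V(\hat y)=h$) together with the identity
\[\frac{d}{d\hat y}\!\left(\frac{\beta(1-\phi(\hat y))}{J_{12}(\hat y)}\right)=\frac{2\beta\phi'(\hat y)}{J_{12}(\hat y)^2},\]
and the vanishing of boundary terms at $x_2=0$, then yields \eqref{Dmu2}.

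The main obstacle is the $D_{\mu_2}$ computation: the direct Melnikov output is singular on $\{x_2=0\}$, and one must find the reformulation above that makes the singularity removable. The algebraic simplification of $\frac{d}{d\hat y}(\beta(1-\phi)/J_{12})$ to $2\beta\phi'/J_{12}^2$ (with no $(\beta+1)$ factor surviving) is what makes the integration by parts produce such a clean answer. Some care is also needed in tracking the signs of $x_2$, $dx_2$ and $dt$ through the upper and lower halves of the orbit so that the reflection symmetry is applied consistently and that both halves contribute equally to the Melnikov integrals, as in the treatment of \cite{krupa_relaxation_2001}.
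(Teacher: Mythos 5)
Your proposal is correct and follows essentially the same route as the paper: a Melnikov-type expansion about the Hamiltonian system at $\mu_2=r_2=0$ with exactly the perturbation fields $G_{r_2}=((\zeta^+(1+\phi)+\zeta^-(1-\phi))x_2,\ (\eta^+x_2^2+\chi^+\hat y)(1+\phi)+(\eta^-x_2^2+\chi^-\hat y)(1-\phi))$ and $G_{\mu_2}=(0,\beta(1-\phi))$, reduction to the half-period by the time-reversal/reflection symmetry, and an integration by parts for $D_{\mu_2}$. Your explicit computation of that integration by parts (via $V'(\hat y)\,d\hat y=-x_2\,dx_2$, the identity $\frac{d}{d\hat y}\bigl(\beta(1-\phi)/J_{12}\bigr)=2\beta\phi'/J_{12}^2$, and vanishing boundary terms at $x_2=0$) correctly fills in the step the paper leaves to the reader.
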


\begin{proof}
Similar calculations to those in \cite[Proposition 4.1]{krupa_relaxation_2001} lead to
\begin{align*}
 D_{r_2}(h) &= \int_0^{T^h} \nabla H(x_2(t),\hat y(t))\cdot G_{r_2}(x_2(t),\hat y(t)) dt,\\
 D_{\mu_2}(h)&=\int_0^{T^h} \nabla H(x_2(t),\hat y(t))\cdot G_{\mu_2}(x_2(t),\hat y(t)) dt,
\end{align*}
where
\begin{align*}
 G_{r_2}(x_2,\hat y) &= \begin{pmatrix}
             (\zeta^+(1+\phi(\hat y))+\zeta^- (1-\phi(\hat y)))x_2\\
(\eta^+x_2^2+\chi^+\hat y)(1+\phi(\hat y))+(\eta^-x_2^2+\chi^-\hat y) (1-\phi(\hat y))
            \end{pmatrix},
\\
 G_{\mu_2}(x_2,\hat y) &=\begin{pmatrix}
              0\\
              (1-\phi(\hat y))\beta 
             \end{pmatrix}.
\end{align*}
The Hamiltonian system possesses a time-reversible symmetry $(x_2,\hat y,t)\mapsto (-x_2,\hat y,-t)$ so:
\begin{align*}
 D_{r_2}(h) &= 2\int_0^{T^h/2} \nabla H(x_2(t),\hat y(t))\cdot G_{r_2}(x_2(t),\hat y(t)) dt,\\
 D_{\mu_2}(h)&=2\int_0^{T^h/2} \nabla H(x_2(t),\hat y(t))\cdot G_{\mu_2}(x_2(t),\hat y(t)) dt.
\end{align*}
For $D_{\mu_2}(h)$ we then use integration by parts.
\end{proof}
\begin{remark}
 If $\vert \hat y_0^h\vert,\vert \hat y_1^h\vert >1$ then $D_{\mu_2}$ can be simplified further:
 \begin{align}
  D_{\mu_2}(h) = -4\int_{-1}^1 \frac{\beta x_2}{(\beta (1-\phi)-(1+\phi))^2}d\phi.\eqlab{Dmu2Large}
 \end{align}
 This is only relevant for case $II_2^\epsilon$. In the case $VI_3^\epsilon$ the maximal canard prevents the local limit cycles from entering $\hat y\ge 1$ (see \figref{HopfCanardR2Mu2}). 
\end{remark}

Since $D_{\mu_2}(h)\ne 0$ we can apply the implicit function theorem to conclude the following:
\begin{proposition}\proplab{largeCyclesK2}
Fix $\nu>0$ small. The family of limit cycles from the Hopf bifurcation, described in \propref{HopfBifurcationChartKappa2}, can be continued into periodic orbits corresponding to roots of $D(r_2,\mu_2,h)$ for $h\le 2 \nu^{-2}$ and $r_2\le r_{20}(\nu)$ sufficiently small. The orbits are $\mathcal O(r_2)$-close to the periodic orbits of the Hamiltonian system $H=H(x_2,\hat y)$ defined in \eqref{HchartK2}.  
\end{proposition}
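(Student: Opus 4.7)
The plan is to apply the implicit function theorem to the equation $D(r_2,\mu_2,h)=0$, viewing $\mu_2$ as the unknown and $(r_2,h)$ as parameters. Since the unperturbed system \eqref{mu2r2eqn} is Hamiltonian, $D(0,0,h)=0$ holds identically in $h$, so the expansion \eqref{Deqn} has no constant term. Provided $D_{\mu_2}(h)$ is bounded away from zero uniformly on the compact interval $[h_0,2\nu^{-2}]$, a quantitative version of the implicit function theorem yields a $C^k$ function
\begin{align*}
\mu_2=\mu_2(r_2,h)=-\frac{D_{r_2}(h)}{D_{\mu_2}(h)}r_2+\mathcal O(r_2^2),
\end{align*}
defined for $r_2\in[0,r_{20}(\nu)]$, whose zero set exhausts the periodic orbits of the perturbed system in this range.

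First I would verify the non-vanishing of $D_{\mu_2}(h)$ from the integral formula \eqref{Dmu2}. The denominator $(\beta(1-\phi(\hat y))-(1+\phi(\hat y)))^2$ is positive by \asuref{asu3}, and $\phi'(\hat y)>0$ on $(-1,1)$ by \eqref{phiProperties} (and vanishes outside, so the integrand is supported where it is signed). Along the half-orbit of the Hamiltonian system from $(0,\hat y_0^h)$ to $(0,\hat y_1^h)$, $x_2$ has a single sign fixed by the direction of rotation (determined by $\text{sign}\,\alpha$, cf.\ the remark following \eqref{J}). Hence the integrand has a definite sign and $D_{\mu_2}(h)\ne 0$ pointwise. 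By continuity of the flow and its period $T^h$ with respect to $h$ on the compact interval $[h_0,2\nu^{-2}]$, both $|D_{\mu_2}(h)|$ is uniformly bounded below and $|D_{r_2}(h)|$ is uniformly bounded above, so the implicit function theorem applies uniformly in $h$.

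For the second assertion, the periodic orbit corresponding to $(r_2,h)$ is the integral curve of the perturbed system \eqref{eqnChartK2y2} starting at $(0,\hat y_0^h)$; standard Gronwall-type regular perturbation estimates over the compact time interval $[0,T^h]$ give the desired $\mathcal O(r_2)$-closeness to the Hamiltonian orbit $\{H=h\}$, uniformly in $h\in[h_0,2\nu^{-2}]$. Matching with the small-amplitude Hopf family from \propref{HopfBifurcationChartKappa2} is obtained at $h=h_0$: the Hopf theorem provides local uniqueness for $0\le h\le h_0$, while the implicit function argument provides local uniqueness for $h\ge h_0$, so the two curves of limit cycles must coincide on the overlap. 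The main obstacle is the uniformity in $h$ as $h\to 2\nu^{-2}$, where the orbit approaches the boundary $\hat y=\pm\nu^{-1}$ of the region used for chart $\kappa_2$: one must check that $D_{\mu_2}(h)$ does not degenerate there (which, in case $II_2^\epsilon$, is transparent from the simplified form \eqref{Dmu2Large}; in case $VI_3^\epsilon$ the canard prevents the orbits from reaching $\hat y=1$, so the analysis is confined to a compact subset of $(-1,1)^c\cup\{\hat y_0^h<\hat y_c\}$ where the bound is automatic).
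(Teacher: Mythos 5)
Your proposal is correct and follows essentially the same route as the paper: the paper's proof is precisely the application of the implicit function theorem to $D(r_2,\mu_2,h)=0$ using $D(0,0,h)\equiv 0$ and $D_{\mu_2}(h)\ne 0$, yielding $\mu_2=-\tfrac{D_{r_2}(h)}{D_{\mu_2}(h)}r_2+\mathcal O(r_2^2)$, with the $\mathcal O(r_2)$-closeness and the matching to the Hopf family at $h=h_0$ left implicit. Your additional verifications (sign of the integrand in \eqref{Dmu2}, uniformity on the compact interval $[h_0,2\nu^{-2}]$, regular-perturbation closeness, and overlap with the locally unique Hopf family) are exactly the routine details the paper suppresses.
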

\begin{proof}
 From \eqref{Deqn} and the implicit function theorem we obtain 
 \begin{align}
  \mu_2 = -\frac{D_{r_2}(h)}{D_{\mu_2}(h)}r_2+\mathcal O(r_2^2).\eqlab{mu2largeCyclesK2}
 \end{align}
\end{proof}

\begin{remark}\remlab{POBeyondLambda2}
 The periodic orbit with $h=2\nu^{-2}$ intersects $\hat y=\hat y_{0}^*$ in $x_2= \mp 2\nu^{-1}$ for $\mu_2=r_2=0$. By \propref{largeCyclesK2} we are therefore able to continue periodic orbits beyond the sections $\Lambda_2^{\mp}$ in chart $\kappa_2$. These orbits belong to a $C^k$-smooth and locally unique family because they are obtained using an argument based on the implicit function theorem for $\mu_2=r_2=0$. 
\end{remark}

\subsection{$\mathcal O(1)$ limit cycles}
We now wish to consider limit cycles of the regularized system with amplitudes that are $\mathcal O(1)$ with respect to $\epsilon$. 
As mentioned above, the analysis is divided into two cases: $II_2^\epsilon$ and $VI_3^\epsilon$.

\subsubsection{$\mathcal O(1)$-limit cycles for case $II_2^\epsilon$}\seclab{LCO1II2}
We start by obtaining $\mathcal O(1)$-periodic orbits in the original $(x,y)$-variables by following fixed points of a Poincar\'e map:
\begin{align}
P_\epsilon:\, \{y=\epsilon,\,x>0\}\rightarrow \{y=\epsilon,\,x>0\},\eqlab{Pepsilon}
\end{align}
where defined under the flow of the regularized system. Since these orbits are $\mathcal O(1)$ with respect to $\epsilon$ and only involve crossing (see \figref{crossingPOS} case $II_2$), the mapping $P_\epsilon$ is smoothly $\mathcal O(\epsilon)$-close to $P_0$, as defined in \eqref{P0Map} for the PWS system. We therefore obtain the following proposition.
\begin{proposition}\proplab{propLimitCyclesReg}
Fix $\mu_0$ small and consider \eqref{II2Assumption}. Suppose that $\vert \mu\vert \ge \mu_0>0$ and $\Delta_{II_2} \ne 0$, where $\Delta_{II_2}$ is defined in \eqref{Ceqn}. Then for $\epsilon\le \epsilon_0(\mu_0)$ sufficiently small, the regularized system has a family of periodic orbits corresponding to fixed points of $P_\epsilon$ of the following form
\begin{align*}
 x_\epsilon(\mu) = x_0(\mu)+\mathcal O(\epsilon),
\end{align*}
with $x_0(\mu)$ given by \eqref{x0mu}, and $\mu \Delta_{II_2}^{-1}<-\mu_0 \vert \Delta_{II_2}\vert^{-1}<0$. The periodic orbits are attracting for $\Delta_{II_2}<0$ and repelling for $\Delta_{II_2}>0$. Moreover, they are continuously $\mathcal O(\epsilon)$-close to periodic orbits of the PWS system.
\end{proposition}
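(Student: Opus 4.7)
The plan is to view the regularized Poincaré map $P_\epsilon$ on $\{y=\epsilon,\,x>0\}$ as a small perturbation of the PWS map $P_0$, and then transfer the hyperbolic fixed point of $P_0$ given by \propref{propLimitCyclesPWS} to $P_\epsilon$ by the implicit function theorem. Concretely, for $(x_0,\epsilon)$ with $x_0>0$, I would decompose the forward trajectory under $X_\epsilon$ into four pieces: (i) the arc in $\{y\ge \epsilon\}$ where $X_\epsilon=X^+$ exactly by \eqref{SotomayorProperty}, returning to $y=\epsilon$ at some $x_1'<0$; (ii) a traversal through the regularization strip $y\in(-\epsilon,\epsilon)$ down to $y=-\epsilon$ at some $x_1''$; (iii) the arc in $\{y\le -\epsilon\}$ where $X_\epsilon=X^-$ exactly, returning to $y=-\epsilon$ at some $x_2'$; (iv) a second traversal of the strip back up to $y=\epsilon$ at $x_2''$. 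The composition $P_\epsilon:x_0\mapsto x_2''$ is smooth in $(x_0,\mu,\epsilon)$ on a neighbourhood of $\{\epsilon=0,\,|\mu|\ge\mu_0\}$ by standard smooth dependence of flows on initial data and parameters (hyperbolicity/transversality of $\Sigma$ at $x_0>0$ and $x_1,x_2$ bounded away from the folds is ensured by $|\mu|\ge\mu_0>0$).

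The key estimate is that each strip traversal contributes only $\mathcal O(\epsilon)$ to the $x$-coordinate. Using the fast-system rescaling $\hat y=y/\epsilon$ of \eqref{yTilde}, the dynamics inside the strip becomes \eqref{slowFastEquations1} in the fast time $\tau=t/(2\epsilon)$, so $x'=\mathcal O(\epsilon)$ while $\hat y'=\mathcal O(1)$ is bounded away from zero along a crossing trajectory (the normal components of $X^\pm$ at the entry points $(x_1',0)$, $(x_2',0)$ are bounded away from zero by the invisibility of the two folds and the constraint $|\mu|\ge\mu_0$). Therefore $\hat y$ traverses $(-1,1)$ in a fast time $\Delta\tau=\mathcal O(1)$, yielding $\Delta x=\mathcal O(\epsilon)$. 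Meanwhile, pieces (i) and (iii) differ from $\sigma_0^+$ and $\sigma_0^-$ only because the starting/ending $y$-level is $\pm\epsilon$ instead of $0$; smooth dependence of the flow on initial conditions yields corrections of order $\epsilon$. Summing the four contributions gives
\begin{equation*}
P_\epsilon(x_0,\mu)=P_0(x_0,\mu)+\mathcal O(\epsilon),
\end{equation*}
with the estimate holding in $C^1$ uniformly in $|\mu|\ge\mu_0$.

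To locate the periodic orbit, observe from \eqref{P0Map} that the fixed-point equation is $P_0(x_0,\mu)-x_0=2\mu+\Delta_{II_2}x_0^2+\mathcal O(x_0^3+\mu x_0)=0$, and its solution $x_0(\mu)=\sqrt{-2\mu\Delta_{II_2}^{-1}}+\mathcal O(\mu)$ with $\mu\Delta_{II_2}^{-1}<-\mu_0|\Delta_{II_2}|^{-1}<0$ is bounded away from $0$ and has derivative $\partial_{x_0}(P_0-\mathrm{id})(x_0(\mu),\mu)=2\Delta_{II_2}x_0(\mu)+\mathcal O(\mu)\ne 0$. Hence it is a hyperbolic (non-degenerate) zero of $P_0-\mathrm{id}$. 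The $C^1$-perturbation estimate above together with the implicit function theorem then produces a unique root $x_\epsilon(\mu)=x_0(\mu)+\mathcal O(\epsilon)$ of $P_\epsilon-\mathrm{id}$ for all $\epsilon\le\epsilon_0(\mu_0)$ sufficiently small. Stability follows because the derivative $P_\epsilon'(x_\epsilon(\mu))=P_0'(x_0(\mu))+\mathcal O(\epsilon)$ has the same position relative to $1$ as $P_0'(x_0(\mu))=1+2\Delta_{II_2}x_0(\mu)+\mathcal O(\mu)$ for $\epsilon,\mu$ small; thus $\Delta_{II_2}<0$ yields attraction and $\Delta_{II_2}>0$ yields repulsion. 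Continuous $\mathcal O(\epsilon)$-closeness of the whole orbit to the PWS cycle follows from the four-piece decomposition together with Gronwall on pieces (i), (iii).

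The only mildly technical point is verifying that the strip traversals in steps (ii) and (iv) are well-defined and uniform in $\mu$ and $\epsilon$. This reduces to showing that the layer equation \eqref{LayerProblemY2} along the crossing branch has no equilibrium in $\hat y\in[-1,1]$ when $x=x_1'$ or $x=x_2'$; equivalently, the signs of $X_2^+(x_1',0,\mu)$ and $X_2^-(x_2',0,\mu)$ do not change on the slab and match the crossing orientation. Under the $II_2$ hypothesis \eqref{II2Assumption} and $|\mu|\ge\mu_0$, $x_1'<0$ gives $X_2^+=x_1'+\mathcal O(\cdot)<0$ and $x_2'>\mu$ gives $X_2^-=-\beta(x_2'-\mu)+\mathcal O(\cdot)<0$, so the layer vector field along each traversal is uniformly bounded away from zero, which delivers the $\mathcal O(\epsilon)$ strip estimate cleanly.
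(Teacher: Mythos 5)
Your proposal is correct and takes essentially the same route as the paper: since the $II_2$ cycles are $\mathcal O(1)$ in size and only involve crossing, $P_\epsilon=P_0+\mathcal O(\epsilon)$ smoothly, and the implicit function theorem applied to the non-degenerate fixed point of $P_0$ from \propref{propLimitCyclesPWS} yields $x_\epsilon(\mu)=x_0(\mu)+\mathcal O(\epsilon)$ with stability read off from $P_\epsilon'=P_0'+\mathcal O(\epsilon)$ --- your four-piece decomposition merely fills in the $\mathcal O(\epsilon)$-closeness that the paper asserts directly. One harmless sign slip in your final verification: with $\beta<0$ and $x_2'>\mu$ one has $X_2^-=-\beta(x_2'-\mu)+\mathcal O(\cdot)>0$, which is precisely what the upward strip crossing requires, not $<0$ as written.
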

\begin{proof}
 \propref{propLimitCyclesReg} gives non-degenerate fixed points $x_0(\mu)$ of $P_0$ for $\mu \Delta_{II_2}^{-1}<-\mu_0 \vert \Delta_{II_2}\vert^{-1}<0$. Since $P_\epsilon=P_0+\mathcal O(\epsilon)$ we can apply the implicit function theorem to obtain fixed points $x_\epsilon(\mu)=x_0(\mu)+\mathcal O(\epsilon)$ of $P_\epsilon$. The stability of the fixed points $x_\epsilon(\mu)$ is also determined by the stability of $x_0(\mu)$ as a fixed point of $P_0$.  
\end{proof}

\subsubsection{$\mathcal O(1)$-limit cycles for case $VI_3^\epsilon$}\seclab{LCO1VI3}
This case has a canard at $\mu=r_2\mu_{2,c}$ and a Hopf bifurcation at $\mu=r_2\mu_{2,H}$. 
The co-existence of a Hopf bifurcation and a canard leads to the canard explosion phenomenon in which the amplitude of limit cycles undergo $\mathcal O(1)$ variations within an exponentially small parameter regime. In order to prove this statement, we follow the proof of a related assertion in \cite[Proposition 5.1]{krupa_relaxation_2001} for classical planar slow-fast systems. 

Let $\mu=r_2\mu_2=\sqrt{\epsilon}\mu_2$ and consider the original $(x,y)$-variables, in which the region of regularization is $y\in (-\epsilon,\epsilon)$, and let $\gamma$ be the forward orbit with initial condition $(x,y)(0)=(0,y_0)$ where $y_0<0$ small but independent of $\epsilon$. The situation is illustrated in \figref{canardExplosion}. Since the fold of the associated PWS system is invisible from below (see \eqref{II2Assumption} and \propref{visibility}) we know that the first return of $\gamma$ with $x=0$ is a point $(0,y_1)$ with $y_1=\mathcal O(\epsilon)$. Let $\underline \gamma$ be the backward orbit with initial condition $(x,y)(0)=(0,y_0)$. Denote by $(0,\underline y_1$) the first return of $\underline \gamma$ with $x=0$. Here $\underline y_1=\mathcal O(\epsilon)$. Let $(x_1(y_0,\epsilon),-\epsilon)$ and $(\underline x_1(y_0,\epsilon),-\epsilon)$ denote the first intersections of $\gamma$ and $\underline \gamma$, respectively, with $y=-\epsilon$. The functions $x_1(y_0,\epsilon)$ and $\underline x_1(y_0,\epsilon)$ are smooth in $y_0$ and $\epsilon$. In particular, $x_1(y_0,0)$ and $\underline x_1(y_0,0)$ can be obtained from the associated PWS system. 

\begin{figure}\begin{center}\includegraphics[width=0.6\textwidth]{./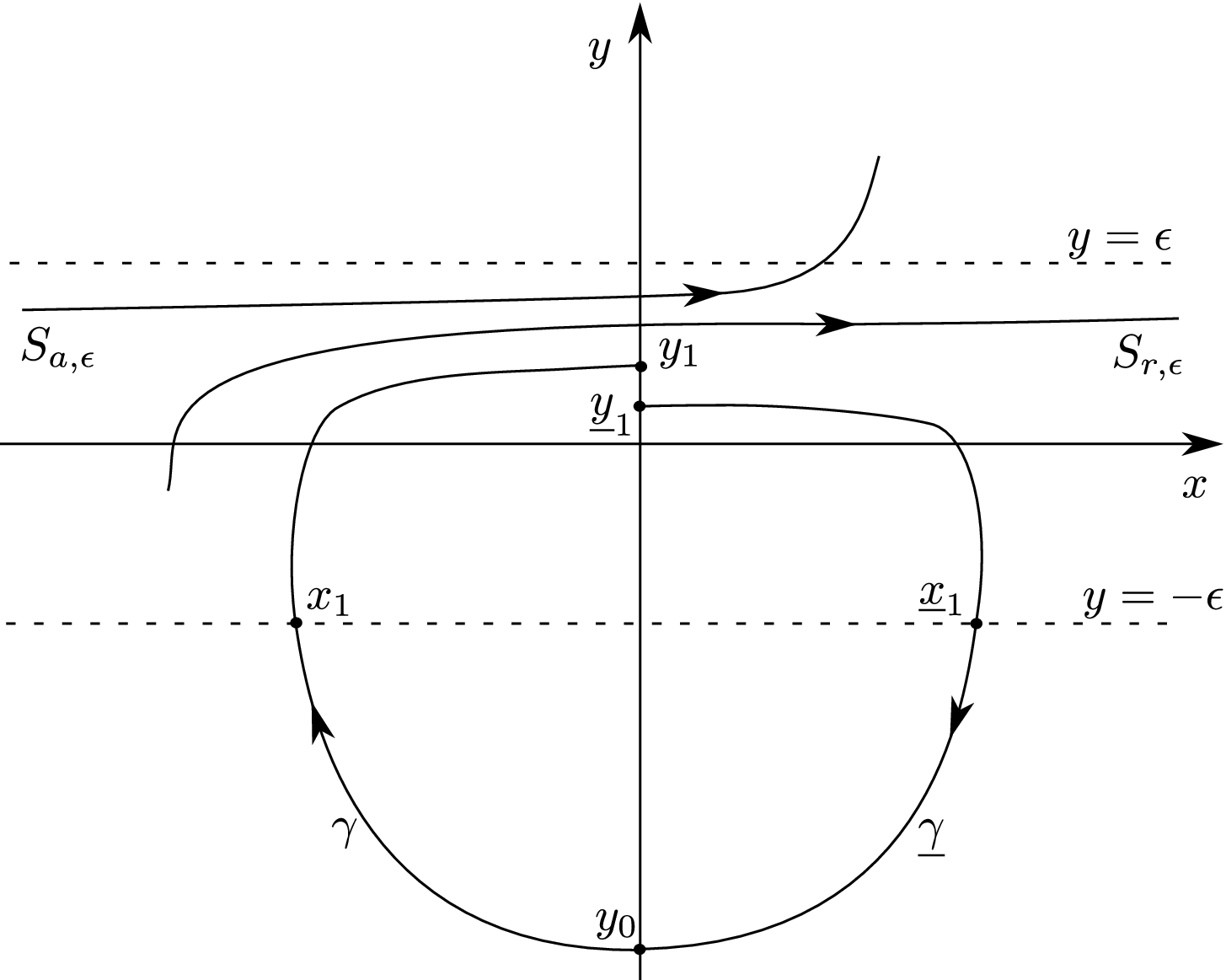}
\caption{Important quantities for case $VI_3^\epsilon$, relevant to \propref{canardExplosion}. The region of regularization $y\in (-\epsilon,\epsilon)$ is exaggerated for illustrative purposes. }\figlab{canardExplosion}
                		\end{center}
              \end{figure}

We consider the distance function:
\begin{align*}
 \mathcal D(y_0,\mu_2) = y_1-\underline y_1.
 \end{align*}
 Roots of $\mathcal D$ correspond to periodic orbits. 
As in \cite{krupa_relaxation_2001} we solve $\mathcal D(y_0,\mu_2)=0$ by noting, from Fenichel theory, that 
\begin{align*}
 \mathcal D(y_0,\mu_{2,c}) = \mathcal O(e^{-c/\epsilon}),\quad c=c(y_0)>0.
\end{align*} 
Since $S_{r,\epsilon}$ and $S_{a,\epsilon}$ are transverse for $\mu_2=\mu_{2,c}$ this then effectively implies the existence of a $\mu_2=\mu_{2,p}(y_0)$ solving $\mathcal D(y_0,\mu_2)=0$ and satisfying $\mu_{2,p}=\mu_{2,c}+\mathcal O(e^{-c/\epsilon})$. Since $x_1$ and $\underline x_1$ are increasing functions of $y_0$ for $y_0$ small, it also follows that $\mu_{2,p}(y_0)$ approaches $\mu_{2,c}$ monotonically as $y_0$ increases, at least for $y_0$ sufficiently small. 
The stability of the periodic orbits is determined by the sign of a \textit{way-in/way-out} function $R=R(y_0)$, see
\cite[Proposition 5.4]{krupa_relaxation_2001}, that measures the contraction and expansion along $S_{a,r}$. In our case the contraction and expansion is determined by the following function 
 \begin{align*}
  (X_2^+(x,0,0)-X_2^-(x,0,0)) \phi' (\hat y).
 \end{align*}
Inserting $\hat y=\hat y_{c}=\phi^{-1}\left(\frac{\beta-1}{\beta+1}\right)$ from \eqref{phiy2c}, which corresponds to $S_{a,r}$ for $\mu_2=r_2=0$, we obtain the function $R=R(y_0)$:
\begin{align*}
R(y_0) = \phi_{1,c} \int_{x_1(y_0,0)}^{\underline x_1(y_0,0)} (X_2^+(x,0,0)-X_2^-(x,0,0)) dx.
\end{align*}
Since $\phi_{1,c}=\phi'(\hat y_{c})>0$, the sign of $R$ coincides with the sign of
\begin{align*}
 \tilde R(y_0)  = \int_{x_1(y_0,0)}^{\underline x_1(y_0,0)} (X_2^+(x,0,0)-X_2^-(x,0,0)) dx.
\end{align*}
We obtain the following proposition:
\begin{proposition}\proplab{canardExplosion}
Consider a point $p=(0,y_0)$ with $y_0\in [-c_1^{-1},-c_2^{-1}]$ with $c_2>c_1$ sufficiently large but fixed. Then for $\epsilon\le \epsilon_0(c_1,c_2)$ sufficiently small there exists a unique periodic orbit through $p$ for $\mu=\mu_p(r_2,y_0)$ where
\begin{align*}
 \mu_p(r_2,y_0) \equiv \mu_{c}+\mathcal O(e^{-c/\epsilon}).
\end{align*}
 The function $\mu_p(r_2,\cdot)$ is monotonic so that $\mu_p(r_2,y_0)$ approaches $\mu_c(r_2)$ as $y_0$ increases.  
 
The periodic orbits are attracting if
\begin{align}
 \Delta_{VI_3} \equiv 
\frac{2}{3\alpha \beta}\left(\alpha (\eta^-+ \beta \eta^+)+\beta(\beta+1)(\zeta^-+\chi^-)\right),\eqlab{Beqn} 
\end{align}
is negative. They are repelling if $\Delta_{VI_3}$ is positive.  
\end{proposition}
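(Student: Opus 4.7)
The plan is to adapt the proof of the canard explosion theorem from classical planar slow-fast theory (\cite[Proposition 5.1]{krupa_relaxation_2001}) to the present regularized PWS setting, exploiting the slow-fast structure uncovered in \secref{regularize} and the blowup analysis in \secref{canards}. The existing setup has already reduced the problem to finding roots of the distance function $\mathcal D(y_0,\mu_2)=y_1-\underline y_1$ and analyzing a way-in/way-out function, so the task is to verify the two analytical ingredients: (i) an exponentially small bound on $\mathcal D$ at the canard value, together with transverse splitting in $\mu_2$, and (ii) a computation of $\tilde R(y_0)$ that identifies its sign with that of $\Delta_{VI_3}$.

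First I would establish existence of $\mu_{2,p}(y_0)$ by solving $\mathcal D(y_0,\mu_2)=0$ via the implicit function theorem. The key estimate
$$\mathcal D(y_0,\mu_{2,c}) = \mathcal O(e^{-c/\epsilon})$$
follows from \thmref{canardTheorem}: at $\mu=\sqrt{\epsilon}\mu_{2,c}$ fixed copies of the Fenichel manifolds $S_{a,\epsilon}$ and $S_{r,\epsilon}$ extend past $\tilde q$ and coincide up to exponentially small error along an $\mathcal O(1)$-segment, so the forward images of $p$ under $X^-$ land exponentially close to this common extension in both time directions. The transverse intersection of $C_{a,2}$ and $C_{r,2}$ along $l_2$ provided by \lemmaref{szmolyan_canards_2001}--\lemmaref{AlgebraicGrowthSolR2} gives the Melnikov splitting $\partial_{\mu_2}\mathcal D(y_0,\mu_{2,c})\neq 0$ (with the same structure as $D_{\mu_2}$ in \eqref{Dmu2}), and the implicit function theorem then yields $\mu_{2,p}(y_0)=\mu_{2,c}+\mathcal O(e^{-c/\epsilon})$. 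Monotonicity of $\mu_{2,p}(\cdot)$ is inherited from the strict monotonicity of $y_0\mapsto x_1(y_0,0)$ and $y_0\mapsto\underline x_1(y_0,0)$ in the PWS limit, transported through the implicit function theorem.

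For the stability statement I would apply the way-in/way-out argument of \cite[Proposition 5.4]{krupa_relaxation_2001}: the linearized return map along the relaxation-type orbit picks up a factor $\exp(R(y_0)/\epsilon + \ldots)$, where $R$ integrates the normal linearization along the slow segment. In our regularized setting $R(y_0)$ reduces (up to the positive constant $\phi_{1,c}$) to
$$\tilde R(y_0)=\int_{x_1(y_0,0)}^{\underline x_1(y_0,0)}\bigl(X_2^+(x,0,0)-X_2^-(x,0,0)\bigr)\,dx.$$
To evaluate this, I would substitute the normalized expansions \eqref{normplus2}, \eqref{normminus2} for $X_2^\pm$ and compute $x_1(y_0,0)$, $\underline x_1(y_0,0)$ from the $X^-$ flow on $y<0$, using conservation along $X^-$ to express these turning-return abscissae in terms of $y_0$. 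Expanding to the order at which $\eta^\pm,\zeta^-,\chi^-$ enter and carrying out the resulting integral should produce $\tilde R(y_0)$ proportional to $\Delta_{VI_3}$ with a strictly positive coefficient, so that $\textrm{sgn}\, R=\textrm{sgn}\,\Delta_{VI_3}$; the sign convention then gives attracting/repelling as stated.

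The main obstacle is this last computation of $\tilde R$ and its identification with $\Delta_{VI_3}$. The bookkeeping of higher-order terms is delicate because $\Delta_{VI_3}$ involves $\eta^\pm,\zeta^-,\chi^-$ but not $\zeta^+,\chi^+$, a structural feature that reflects the fact that, unlike $II_2^\epsilon$ (compare \eqref{Ceqn}), the singular orbit in $VI_3^\epsilon$ spends only an asymptotically negligible portion of its period outside $y\le 0$ before regularization, so contributions from $X^+$ enter only through the fold geometry at $q^+$ and not through a finite $X^+$-arc. Confirming that the $\beta(\beta+1)$ prefactor in \eqref{Beqn} arises correctly from matching the $X^-$-flow to the sliding segment will be the most technically demanding step; everything else is standard implicit function theorem and Fenichel machinery.
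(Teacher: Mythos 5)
Your overall route is the paper's route: the existence, exponential closeness to $\mu_{2,c}$, and monotonicity statements are obtained exactly as you describe, by solving $\mathcal D(y_0,\mu_2)=0$ with the Fenichel estimate $\mathcal D(y_0,\mu_{2,c})=\mathcal O(e^{-c/\epsilon})$ and the transversality of the slow manifolds at the canard value supplying the nonzero $\mu_2$-derivative; and the stability is reduced, as you do, to the sign of the way-in/way-out integral $\tilde R(y_0)=\int_{x_1(y_0,0)}^{\underline x_1(y_0,0)}\bigl(X_2^+(x,0,0)-X_2^-(x,0,0)\bigr)dx$ via the contraction rate $(X_2^+-X_2^-)\phi'(\hat y_c)$ with $\phi_{1,c}>0$.

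The genuine gap is the step you yourself flag: you never verify that $\operatorname{sgn}\tilde R=\operatorname{sgn}\Delta_{VI_3}$, you only assert that the expansion ``should'' produce it, and your proposed route (computing $x_1$ and $\underline x_1$ separately in terms of $y_0$ using ``conservation along $X^-$'') is shaky, since $X^-$ has no exact conserved quantity once $\zeta^-,\eta^-,\chi^-\ne 0$, and it is precisely these higher-order coefficients that decide the sign. The paper closes this with a short device that eliminates $y_0$ altogether: invert $x_1=x_1(y_0,0)$ and write $\underline x_1$ as the image of $x_1$ under the half-return map of $X^-$ already computed in \lemmaref{sigmaMaps}, i.e. $\underline x_1=\sigma_0^-(x_1)=-x_1+A^-x_1^2+\mathcal O(x_1^3)$ at $\mu=0$. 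Since on $y=0$, $\mu=0$ one has $X_2^+-X_2^-=(1+\beta)x+(\eta^+-\eta^-)x^2+\mathcal O(x^3)$, a direct integration gives
\begin{align*}
 \tilde R(x_1)=\Bigl(A^-(1+\beta)+\tfrac23(\eta^+-\eta^-)\Bigr)\vert x_1\vert^3+\mathcal O(x_1^4),
\end{align*}
and substituting $A^-$ from \eqref{AN} shows the bracket equals $\Delta_{VI_3}$ in \eqref{Beqn}; smallness of $x_1$ (equivalently $y_0$) then yields the attracting/repelling dichotomy. This also settles your worry about the $\beta(\beta+1)$ prefactor and explains structurally why $\zeta^+,\chi^+$ are absent: $\zeta^+$ sits in $X_1^+$ and $\chi^+$ multiplies $y$, so neither enters $X_2^+(x,0,0)$, and the only nonlinear data transported around the loop is $A^-$ (not $A^+$), because the return arc lies entirely in the $X^-$-flow; your heuristic about the orbit spending negligible time in $y>0$ is not the operative reason and is not needed.
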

\begin{proof}
To verify the statement about stability we need to compute $\tilde R(y_0)$. To do this we invert $x_1=x_1(y_0,0)$ for $y_0$ and parametrize $\underline x_1$ in terms of $x_1$ rather than $y_0$. Then $\underline x_1(x_1)$ is obtained from the map $\sigma_0^-$ in 
\lemmaref{sigmaMaps} with $\mu=0$:
\begin{align*}
 \underline x_1 =\sigma_0^-(x_1)= -x_1+A^-x_1^2+\mathcal O(x_1^3),
\end{align*}
using the backward flow of $X^-$, 
where $A^-$ is given by \eqref{AN}. We therefore consider the following integral
\begin{align*}
 \tilde R(x_1) &=  \int_{x_1}^{\underline x_1(x_1)} (X_2^+(x,0,0)-X_2^-(x,0,0)) dx \\
 &=\int_{x_1}^{\underline x_1(x_1)} ((1+\beta)x+(\eta^+-\eta^-)x^2+\mathcal O(x^3))dx\\
 &=\left( A^-(1+\beta)+\frac23  (\eta^+-\eta^-)\right)\vert x_1\vert^3+\mathcal O(x_1^4).
\end{align*}
Hence the sign of $\tilde R(x_1)$ is determined by 
\begin{align*}
 A^-(1+\beta)+\frac23(\eta^+-\eta^-)=\frac{2}{3\alpha \beta}\left(\alpha (\eta^-+ \beta \eta^+)+\beta(\beta+1)\zeta^-\right),
\end{align*}
where we have used \eqref{AN}. The right hand side is \eqref{Beqn}. 
Since $\tilde R<0$ implies stability while $\tilde R>0$ implies instability the result follows for $x_1$ (and hence $y_0$) sufficiently small. 
\end{proof}

\begin{remark}
In classical planar slow-fast systems \cite{krupa_relaxation_2001}, a canard is generically associated with a Hopf bifurcation \textit{and} a canard explosion. This is not necessarily the case here. For example the PWS case $VV_1$ has a  singular canard but no local limit cycles in either the PWS system or its regularization. 
%
Conversely, the regularized system can undergo a Hopf bifurcation without the presence of a canard. This is demonstrated by case $II_2^\epsilon$. 
%
\end{remark}

We conclude this subsection with \tabref{tblreg} which summarizes properties of the seven regularized two-folds (compare with \tabref{tblPWS}).\\

\begin{center}
\begin{longtable}{| c | c  c | c  c | c  c |}
\caption{1: type of regularized two-fold singularity. 2: value of $\alpha \delta$. 3: equilibrium (x = no, $\checkmark$ = yes). 4: sign of $\Omega$. 5: Hopf (x = no, $\checkmark$ = yes, n.a. = not applicable). 6: value of $\beta$. 7: canard (x = no, $\checkmark$ = yes).}\tablab{tblreg} \\
\hline
1 & 2 & 3 & 4 & 5 & 6 & 7\\
\hline
 \hline
Type & $\alpha \delta$ & Equilibrium & $\Omega$ & Hopf & $\beta$ & Canard \\
\hline
\hline
$VV_1^\epsilon$ & + & x & + & n.a. & + & $\checkmark$ \\
\hline
$VV_2^\epsilon$ & - & $\checkmark$ & - & x & - & x \\
\hline
$VI_1^\epsilon$ & + & x & $\pm$ & n.a. & - & x\\
\hline
$VI_2^\epsilon$ & - & $\checkmark$ & - & x & + & $\checkmark$ \\
\hline
$VI_3^\epsilon$ & - & $\checkmark$ & + & $\checkmark$ & + & $\checkmark$ \\
\hline
$II_1^\epsilon$ & + & x & - & n.a. & + & $\checkmark$ \\
\hline
$II_2^\epsilon$ & - & $\checkmark$ & + & $\checkmark$ & - & x \\
 \hline
 \end{longtable}
\end{center}

 \subsection{Connecting limit cycles}
Having obtained $\mathcal O(1)$ limit cycles in the two cases $II_2^\epsilon$ and $VI_3^\epsilon$, we now analyze the connection between these limit cycles and those described by \propref{largeCyclesK2} that are due to the Hopf bifurcation. The results are summarized in one of our main results:
\begin{theorem}\label{mainLimitCycles} 
For $\epsilon$ sufficiently small:
\begin{itemize}
  \item[$II_2^\epsilon$:] There exists a $C^k$-smooth family of locally unique periodic orbits of the regularized system \eqref{Xeps} that is due to the Hopf bifurcation in Theorem~\ref{mainHopf}. If $a_2< 0$ ($a_2>0$) where $a_2$ is the first Lyapunov coefficient as defined in \eqref{K2}, then the periodic orbits are attracting (repelling) near the Hopf bifurcation. If 
  \begin{align}
  \Delta_{II_2}=\frac{2}{3\alpha \beta} \left(\alpha (\eta^-+\beta \eta^+)+\beta(\zeta^-+\chi^-+\alpha (\zeta^++\chi^+))\right) \tag*{\eqref{Ceqn}},
  \end{align} is negative (positive) then the periodic orbits for $\Delta_{II_2}^{-1} \mu \le - c\sqrt{\epsilon}$, $c>0$ sufficiently large, are attracting (repelling).
  The periodic orbits for $\mu=\mathcal O(1)$ (with respect to $\epsilon$) are continuously $\mathcal O(\epsilon)$-close to the PWS periodic orbits in \propref{propLimitCyclesPWS}.
  \item[$VI_3^\epsilon$:] There exists a $C^k$-smooth family of \textnormal{small} periodic orbits of the regularized system \eqref{Xeps} that is due to the Hopf bifurcation in Theorem~\ref{mainHopf}. There also exists a $C^k$-smooth family of periodic orbits that are $\mathcal O(1)$ (with respect to $\epsilon$) in amplitude and which undergo a canard explosion, where the amplitude changes by $\mathcal O(1)$ within an exponentially small parameter regime around the canard value $\mu=\sqrt{\epsilon}\mu_{2,c}(\sqrt{\epsilon})$ (see Theorem~\ref{canardTheorem}). If $a_2< 0$ ($a_2>0$) where $a_2$ is the first Lyapunov coefficient as defined in \eqref{K2}, then the periodic orbits are attracting (repelling) near the Hopf bifurcation. If 
 \begin{align}
 \Delta_{VI_3}=\frac{2}{3\alpha \beta}\left(\alpha (\eta^-+ \beta \eta^+)+\beta(\beta+1)(\zeta^-+\chi^-)\right) \tag*{\eqref{Beqn}}
 \end{align} 
 is negative (positive) then the $\mathcal O(1)$-periodic orbits are attracting (repelling).
 \end{itemize}
\end{theorem}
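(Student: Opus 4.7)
The plan is to assemble the theorem by stitching together three analyses already developed in the preceding sections: (i) the local Hopf family from \propref{HopfBifurcationChartKappa2}; (ii) the Melnikov-based continuation in chart $\kappa_2$ from \propref{largeCyclesK2}; and (iii) the $\mathcal O(1)$-amplitude limit cycles, which are supplied by \propref{propLimitCyclesReg} in case $II_2^\epsilon$ and by \propref{canardExplosion} in case $VI_3^\epsilon$. In each step the $C^k$-smoothness and local uniqueness will be extracted from the implicit function theorem, applied either to the Melnikov distance \eqref{Deqn} in chart $\kappa_2$, to the Poincar\'e map $P_\epsilon$ of \eqref{Pepsilon}, or to the canard-explosion distance $\mathcal D(y_0,\mu_2)$ from \propref{canardExplosion}.

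For case $II_2^\epsilon$, where $\beta<0$ and hence no sliding occurs, I would first invoke \propref{largeCyclesK2} to continue the small Hopf orbits of \propref{HopfBifurcationChartKappa2} within chart $\kappa_2$ up to Hamiltonian energy $h=2\nu^{-2}$, which by \remref{POBeyondLambda2} already extends past the sections $\Lambda_2^\pm$. Pulling back through $\kappa_{21}$ and $\kappa_{23}$ places these cycles in the original $(x,y)$-coordinates where they leave the regularization strip transversally. Since the corresponding PWS cycles $x_0(\mu)$ of \propref{propLimitCyclesPWS} are non-degenerate fixed points of $P_0$, the implicit function theorem applied to $P_\epsilon-\mathrm{id}=P_0-\mathrm{id}+\mathcal O(\epsilon)$ yields the unique $\mathcal O(\epsilon)$-perturbation asserted in \propref{propLimitCyclesReg}, and continuity in $\mu$ across the overlap region $|\mu|\sim\sqrt\epsilon$ guarantees that these cycles sit on the same smooth branch as the small Hopf cycles. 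The stability statement then comes in two pieces: the sign of $a_2$ near the Hopf (from \propref{HopfBifurcationChartKappa2}), and the sign of $P_0'(x_0(\mu))-1=2\Delta_{II_2}x_0(\mu)+\mathcal O(\mu)$ inherited from the proof of \propref{propLimitCyclesPWS}.

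For case $VI_3^\epsilon$, where $\beta>0$ and sliding coexists with the maximal canard from Theorem~\ref{canardTheorem}, the small Hopf family again extends within chart $\kappa_2$ by \propref{largeCyclesK2} for all energies at which the underlying Hamiltonian orbit avoids the degeneracy $\hat y=\hat y_c$ of the symplectic structure \eqref{J}; the family of small periodic orbits is thereby produced. The $\mathcal O(1)$-amplitude family and its canard explosion are then provided wholesale by \propref{canardExplosion}, whose construction uses Fenichel's theorem applied to $S_{a,\epsilon}$ and $S_{r,\epsilon}$ combined with their transverse intersection at $\mu=\sqrt\epsilon \mu_{2,c}$ established in the proof of Theorem~\ref{canardTheorem}. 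The stability sign determined by $\Delta_{VI_3}$ is precisely the way-in/way-out integral $\tilde R$ computed in that proposition, and the near-Hopf stability is read off from $a_2$ in \propref{HopfBifurcationChartKappa2}.

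The main obstacle, and the reason the result is only formulated as the existence of two families rather than one, is to exhibit a single $C^k$-smooth branch containing both the small Hopf cycles and the $\mathcal O(1)$ canard cycles of $VI_3^\epsilon$. The small cycles are parametrised by Hamiltonian energy $h$ in chart $\kappa_2$, whereas the large canard cycles are parametrised in \propref{canardExplosion} by the intersection height $y_0$ with $x=0$; bridging the two families requires tracking the orbit through the canard-explosion window around $\mu=\sqrt\epsilon\mu_{2,c}$, where Fenichel theory breaks down at the non-hyperbolic line $\tilde q$ and a connection has to be made via a blown-up analysis in charts $\kappa_{1,3}$ analogous to \cite{krupa_relaxation_2001}. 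This is exactly why Conjecture~\ref{mainConjecture1} is stated but not proved; for Theorem~\ref{mainLimitCycles} as formulated it suffices to establish the two families separately with the claimed stability types, and the steps above accomplish this.
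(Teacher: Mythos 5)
Your treatment of case $VI_3^\epsilon$ matches the paper: since the theorem only asserts two separate families there, quoting \propref{HopfBifurcationChartKappa2}, \propref{largeCyclesK2} and \propref{canardExplosion} and deferring the connection to Conjecture~\ref{mainConjecture1} is exactly what is done. The gap is in case $II_2^\epsilon$. You claim that ``continuity in $\mu$ across the overlap region $|\mu|\sim\sqrt\epsilon$'' glues the chart-$\kappa_2$ cycles of \propref{largeCyclesK2} to the $\mathcal O(1)$ cycles of \propref{propLimitCyclesReg}, but these two results have no overlapping range of validity. The continuation in chart $\kappa_2$ only reaches energies $h\le 2\nu^{-2}$, i.e.\ amplitudes $x=\mathcal O(\nu^{-1}\sqrt\epsilon)$ and parameter values $\mu=r_2\mu_2=\mathcal O(\epsilon)$ (cf.\ \eqref{mu2largeCyclesK2} and \remref{finalRemark}), while \propref{propLimitCyclesReg} requires $|\mu|\ge\mu_0>0$ fixed; indeed the implicit-function-theorem argument there is not uniform as $\mu\to0$, because the non-degeneracy of the PWS fixed point degrades like $P_0'(x_0(\mu))-1=2\Delta_{II_2}x_0(\mu)+\mathcal O(\mu)\sim\sqrt{|\mu|}$, so $P_\epsilon=P_0+\mathcal O(\epsilon)$ does not yield locally unique fixed points down to the regime where the Hopf cycles live. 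The entire intermediate regime $\epsilon\lesssim|\mu|\lesssim 1$, with amplitudes between $\mathcal O(\sqrt\epsilon)$ and $\mathcal O(1)$, is left uncovered by your argument, and with it both the claim of a single locally unique $C^k$ family and the stability statement for $\Delta_{II_2}^{-1}\mu\le -c\sqrt\epsilon$, which in that regime cannot be read off from either $a_2$ or $P_0'$.

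The paper closes this gap in \secref{LCCII2} by writing the return map in the phase-directional charts: $P_3=\xi_3^+\circ\sigma_{31}^-\circ\xi_1^-\circ\sigma_{13}^+$, with the crossing maps $\sigma_{13}^+,\sigma_{31}^-$ computed from \lemmaref{sigmaEpsilonMap0} and \lemmaref{sigmaEpsilonMap} and the transition maps $\xi_1^-,\xi_3^+$ from \lemmaref{xipmMap}. The resulting expansion \eqref{r3Plus} gives, via the implicit function theorem, the locally unique family of fixed points $r_3=-2\mu_3\Delta_{II_2}^{-1}+\mathcal O(\mu_3(\epsilon_3+\mu_3))$ of \propref{fixPointP3}, valid uniformly for $\mu_3,\epsilon_3$ small, whose stability is governed by the sign of $\Delta_{II_2}$; matching this family at $\epsilon_3=\nu^2$ (the section $\Lambda_2^+$) with the chart-$\kappa_2$ cycles and at $r_3=\rho$ (the section $\Lambda^+$) with \propref{propLimitCyclesReg} is what produces the single smooth branch asserted in the theorem. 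Without this (or an equivalent uniform estimate, e.g.\ careful control of the distance function \eqref{DistanceFunction} for large $h$, as discussed in \remref{novelityCharts}), your proof of case $II_2^\epsilon$ does not go through.
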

The proof of Theorem~\ref{mainLimitCycles} is divided into two cases: $II_2^\epsilon$ and $VI_3^\epsilon$.

\subsubsection{Connecting limit cycles for case $II_2^\epsilon$ using charts $\kappa_{1,3}$}\seclab{LCCII2}


To connect the periodic orbits described in chart $\kappa_2$ by \propref{largeCyclesK2} with the $\mathcal O(1)$ periodic orbits in \propref{propLimitCyclesReg} we first return to the original $(x,y)$ variables in which the region of regularization is $y\in (-\epsilon,\epsilon)$ and consider the mappings $\sigma_\epsilon^{\pm}$ taking $(x_0,\epsilon)$ to $(\sigma_\epsilon^+(x_0),\epsilon)$ and $(x_1,-\epsilon)$ to $(\sigma_\epsilon^-(x_1),-\epsilon)$, respectively. See \figref{crossingPOSReg}. 
{We have:
\begin{lemma}\lemmalab{sigmaEpsilonMap0}
Fix $c>0$ large. The maps $\sigma_\epsilon^{\pm}$ are defined for 
\begin{align}
x\in (x_f^{+}(\mu,\epsilon),c^{-1}]&\quad \text{where}\quad x_f^+(\mu,\epsilon) \equiv -\chi^+\epsilon + \mathcal O(\epsilon (\mu+\epsilon)),\eqlab{xfPlus}\\
&\text{and}\nonumber\\
 x\in [-c^{-1},x_f^-(\mu,\epsilon))&\quad \text{where}\quad x_f^-(\mu,\epsilon)  \equiv \mu-\beta^- \chi^- \epsilon + \mathcal O(\epsilon (\mu+\epsilon)),\eqlab{xfMinus}
\end{align}
respectively and for those $x$ the maps $\sigma_\epsilon^{\pm}$ satisfy
\begin{align}
 \sigma_\epsilon^{\pm} = \sigma_0^{\pm}+\mathcal O(\epsilon),\eqlab{sigmaEpsilonMap}
\end{align}
where $\sigma_0^\pm$ are described in \lemmaref{sigmaMaps}.  
\end{lemma}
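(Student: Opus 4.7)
The plan is to exploit the exact identity $X_\epsilon = X^{\pm}$ outside the strip $|y|<\epsilon$, which is the content of \eqref{SotomayorProperty}. Consequently $\sigma_\epsilon^+$ is nothing but the Poincar\'e return map of the smooth vector field $X^+$ to the horizontal line $y=\epsilon$, and $\sigma_\epsilon^-$ the return map of $X^-$ to $y=-\epsilon$. No regularization-specific dynamics enter; only the smooth flows of $X^\pm$ on $\Sigma^\pm$, already used to define $\sigma_0^\pm$ in \lemmaref{sigmaMaps}, are involved.

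I first determine the domains of definition. A trajectory of $X^+$ leaving $(x_0,\epsilon)$ returns to $y=\epsilon$ in finite time precisely when it first enters $\Sigma^+$, which in turn requires $X_2^+(x_0,\epsilon,\mu)>0$; the boundary of this set is the fold curve $\{X_2^+(x,\epsilon,\mu)=0\}$. Using the normal form \eqref{normplus2} and applying the implicit function theorem with leading coefficient $\chi^+$ in front of $y$ gives $x_f^+(\mu,\epsilon)=-\chi^+\epsilon+\mathcal O(\epsilon(\mu+\epsilon))$, which is \eqref{xfPlus}. The same argument on $y=-\epsilon$ with \eqref{normminus2}, after dividing by the nonzero factor $-\beta$, yields \eqref{xfMinus}. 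The upper bound $c^{-1}$ simply restricts us to the validity region of the normal forms.

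Next, for $x_0\in(x_f^+(\mu,\epsilon),c^{-1}]$, the forward flow of $X^+$ from $(x_0,\epsilon)$ sits in $\Sigma^+$, reaches a unique maximum of $y$, and returns transversally to $y=\epsilon$ at a unique point $\sigma_\epsilon^+(x_0)$; this is the standard consequence of bringing $X^+$ into fold normal form near the tangency. Smoothness of $\sigma_\epsilon^+$ in $x_0,\mu,\epsilon$ away from the fold then follows from smooth dependence on initial data and transversality of the cross-section. The mirror analysis treats $\sigma_\epsilon^-$. To extract the approximation \eqref{sigmaEpsilonMap}, I embed both $\sigma_0^+$ and $\sigma_\epsilon^+$ into a single family: let $\tilde\sigma^+(x_0,y_0;\mu)$ denote the first-return map of $X^+$ to the line $y=y_0$. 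This family is jointly $C^k$ in $(x_0,y_0,\mu)$ on the open set where the cross-section is transverse and bounded away from the fold, so Taylor expansion in $y_0$ around $0$ gives $\tilde\sigma^+(x_0,\epsilon;\mu)=\tilde\sigma^+(x_0,0;\mu)+\mathcal O(\epsilon)$, which is precisely $\sigma_\epsilon^+(x_0)=\sigma_0^+(x_0)+\mathcal O(\epsilon)$; the $-$ case is identical after translating by $\mu$.

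The one delicate point -- and the main obstacle -- is ensuring that the $\mathcal O(\epsilon)$ bound remains uniform as $x_0$ tends to the fold $x_f^+(\mu,\epsilon)$, where the return time diverges and $\tilde\sigma^+$ ceases to be smooth. The remedy is to work in Dumortier's fold normal form $\dot x=-1+\cdots,\ \dot y=x+\cdots$ near the tangency and introduce the shifted variable $\xi=x-x_f^+(\mu,\epsilon)$. Since $x_f^+(\mu,\epsilon)=x_f^+(\mu,0)+\mathcal O(\epsilon)$, both $\sigma_0^+$ and $\sigma_\epsilon^+$ in the $\xi$ coordinate are described by the \emph{same} smooth involution up to an $\mathcal O(\epsilon)$ perturbation, giving uniformity of the estimate over the entire interval $(x_f^+(\mu,\epsilon),c^{-1}]$. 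The analogous coordinate shift by $x_f^-(\mu,\epsilon)$ closes the argument for $\sigma_\epsilon^-$.
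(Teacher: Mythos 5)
Your proposal follows essentially the same route as the paper's proof: use \eqref{SotomayorProperty} so that $\sigma_\epsilon^\pm$ are just return maps of the smooth fields $X^\pm$ to $y=\pm\epsilon$, obtain the domain boundaries $x_f^\pm$ by solving $X_2^\pm(x,\pm\epsilon,\mu)=0$ with the normal forms \eqsref{normplus2}{normminus2} and the implicit function theorem, and get $\sigma_\epsilon^\pm=\sigma_0^\pm+\mathcal O(\epsilon)$ by regular perturbation (smooth dependence on the section height). The only quibble is your ``delicate point'': at an invisible quadratic tangency the return time to $y=\epsilon$ tends to zero rather than diverging, and the return map remains smooth up to the fold, so the uniformity you worry about comes for free — your fold-normal-form remedy is harmless but not needed.
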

\begin{proof}
 The mappings $\sigma_0^\pm$ map $\{y=0\}$ to itself by the forward flow of $X^{\pm}$ for $x>0$ and $x<\mu$, respectively. The mappings $\sigma_\epsilon^{\pm}$, on the other hand, map $\{y=\pm \epsilon\}$ to itself by the forward flow of $X^{\pm}$, respectively. Here $\sigma_\epsilon^+$ is defined for $x>x_f^+(\mu,\epsilon)$ where $X_2^+(x_f^+(\mu,\epsilon),\epsilon,\mu)=0$ while $\sigma_\epsilon^+$ is defined for $x<x_f^-(\mu,\epsilon)$ where $X_2^-(x_f^-(\mu,\epsilon),-\epsilon,\mu)=0$. Using \eqsref{normplus2}{normminus2} and the implicit function theorem gives \eqsref{xfPlus}{xfMinus}, respectively, for $c$ sufficiently large. Equation \eqref{sigmaEpsilonMap} therefore follows by standard regular perturbation theory.
\end{proof}}
\begin{remark}\remlab{sigma13Remark}
 {The mappings $\sigma_{\epsilon}^{\pm}$ do not depend upon the regularization. They are due to \eqref{SotomayorProperty} determined by $X^{\pm}$.}
\end{remark}
\begin{figure}
\begin{center}
\includegraphics[width=.75\textwidth]{./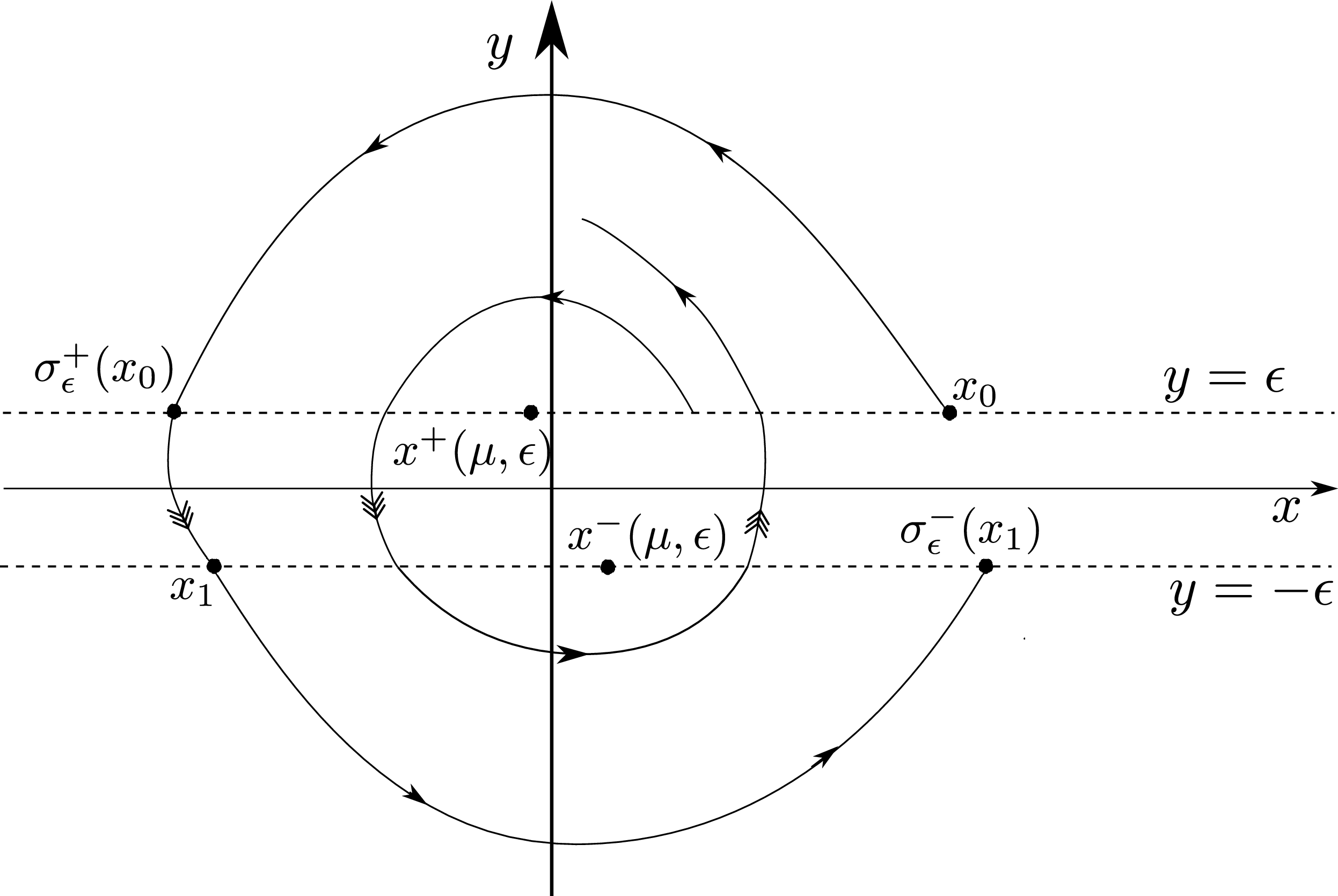}
\caption{The mappings $\sigma_\epsilon^\pm$ associated with the regularization of the invisible two-fold singularity $II_2$, where $\delta=-1$, $\alpha>0$ and $\beta<0$. The points $(x,y)=(x^{\pm},\pm \epsilon)$ are points where $X_2^{\pm}(x,\pm \epsilon,\mu)=0$. Note that the mappings $\xi_1^-$ and $\xi_3^+$ in \eqsref{xi1Negative}{xi3Positive} describe the fast transitions indicated in the figure by the triple-headed arrows, using the charts $\kappa_1$ and $\kappa_3$. These transitions would appear nearly vertical in the stretched coordinate system $(x,\hat y)$. }\figlab{crossingPOSReg}
\end{center}
\end{figure}

We now write these mappings $\sigma_\epsilon^\mp$ in terms of the charts $\kappa_{1,3}$. The resulting mappings will be denoted by 
\begin{align*}
\sigma_{31}^-=\kappa_3 \circ \sigma_\epsilon^- \circ \kappa_1^{-1},
\end{align*}
and 
\begin{align*}
\sigma_{13}^+ = \kappa_1 \circ \sigma_\epsilon^- \circ \kappa_3^{-1},
\end{align*}
respectively, using the subscripts to highlight that these mappings are from $\kappa_1$ ($\kappa_3$) to $\kappa_3$ ($\kappa_1$), respectively. 
\begin{lemma}\lemmalab{sigmaEpsilonMap}
Consider \eqref{II2Assumption}.
 In terms of the charts $\kappa_{1,3}$ the mappings $\sigma_\epsilon^{\mp}$ take the following forms:
 \begin{align*}
  \sigma_{31}^-:\quad \kappa_1 \cap \{\hat y=-1\}&\rightarrow \kappa_3 \cap \{\hat y=-1\},\\
  r_3 &=r_1(1+2\mu_1+A^- r_1+\mathcal O(r_1(\epsilon_1+\mu_1+r_1)),\\
  \epsilon_3 &=\epsilon_1(1-2(2\mu_1+A^-r_1)+\mathcal O(r_1(\epsilon_1+\mu_1+r_1)+\mu_1^2)),\\
  \mu_3 &=\mu_1(1-(2\mu_1+A^-r_1)+\mathcal O(r_1(\epsilon_1+\mu_1+r_1)+\mu_1^2)),
 \end{align*}
and
 \begin{align*}
  \sigma_{13}^+:\quad \kappa_3 \cap \{\hat y=1\}&\rightarrow \kappa_1 \cap \{\hat y=1\},\\
  r_1 &=r_3(1-A^+ r_3+\mathcal O(r_3(\epsilon_3+\mu_3+r_3)),\\
  \epsilon_1 &=\epsilon_3(1+2A^+r_3+\mathcal O(r_3(\epsilon_3+\mu_3+r_3))),\\
  \mu_1 &=\mu_3(1+A^+r_3+\mathcal O(r_3(\epsilon_3+\mu_3+r_3))),
 \end{align*}
respectively, for $r_1$, $\epsilon_1$ and $\mu_1$ sufficiently small. 

\end{lemma}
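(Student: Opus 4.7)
The plan is a direct computation: express the planar maps $\sigma_\epsilon^{\mp}$ from Lemma~\ref{lemma:sigmaEpsilonMap0} in chart coordinates by substituting the definitions of $\kappa_{1,3}$ from \eqref{eq:eqChartKappa1} and \eqref{eq:eqChartKappa3}, and then use conservation of $\epsilon$ and $\mu$ (which are invariants of the flow in the extended phase space) to read off the remaining components.

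First, for $\sigma_{31}^-$, I would take a point with $y=-\epsilon$ and $x = -r_1<0$, corresponding in chart $\kappa_1$ to the intersection $\hat y = -1$ of the hyperplane with the blowup. By \lemmaref{sigmaEpsilonMap0}, the image under $\sigma_\epsilon^-$ has $y=-\epsilon$ and $x$-coordinate $\sigma_\epsilon^-(-r_1) = \sigma_0^-(-r_1)+\mathcal O(\epsilon)$, where $\sigma_0^-(-r_1)$ is obtained from \lemmaref{sigmaMaps} by substituting $x_1=-r_1$ and $\mu = r_1\mu_1$:
\begin{align*}
\sigma_0^-(-r_1) = r_1 + 2r_1\mu_1 + A^- r_1^2 + \mathcal O(r_1(r_1\mu_1+r_1^2)) = r_1\bigl(1+2\mu_1+A^-r_1+\mathcal O(r_1(\mu_1+r_1))\bigr).
\end{align*}
Since $\epsilon = r_1^2\epsilon_1$, the $\mathcal O(\epsilon)$ correction becomes $\mathcal O(r_1^2\epsilon_1)$, which can be absorbed into the $\mathcal O(r_1(\epsilon_1+\mu_1+r_1))$ remainder inside the parenthesis. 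As the image lies in the half $x>0$ it is naturally described by chart $\kappa_3$, giving $r_3 = \sigma_\epsilon^-(-r_1)$, which is precisely the first formula.

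Next I would use that $\epsilon$ and $\mu$ are preserved under the flow, so $r_1^2\epsilon_1 = r_3^2\epsilon_3$ and $r_1\mu_1 = r_3\mu_3$. Solving for $\epsilon_3$ and $\mu_3$ with the expansions $(1+u)^{-2}=1-2u+\mathcal O(u^2)$ and $(1+u)^{-1}=1-u+\mathcal O(u^2)$ applied to $u = 2\mu_1+A^-r_1+\mathcal O(r_1(\epsilon_1+\mu_1+r_1))$ yields the stated expansions for $\epsilon_3$ and $\mu_3$; in particular, the squared terms produce the extra $\mu_1^2$ contribution to the remainder.

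The computation for $\sigma_{13}^+$ is entirely analogous, starting from $y=+\epsilon$ and $x=r_3>0$ (so $\hat y=1$ in chart $\kappa_3$), applying $\sigma_\epsilon^+(r_3)=-r_3+A^+r_3^2+\mathcal O(r_3(\mu+r_3^2))+\mathcal O(\epsilon)$ from \lemmaref{sigmaMaps}--\lemmaref{sigmaEpsilonMap0}, matching the result to chart $\kappa_1$ via $x=-r_1$, and invoking conservation of $\epsilon,\mu$ one more time to obtain the expressions for $\epsilon_1$ and $\mu_1$. The sign conventions of the blowup cause the term $-A^+r_3$ rather than $+A^-r_1$ to appear, but no new ideas are required. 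The only real obstacle is the bookkeeping of error terms: one has to verify that the $\mathcal O(\epsilon)$ correction in \eqref{eq:sigmaEpsilonMap}, once rewritten through $\epsilon = r_i^2\epsilon_i$, is compatible with the asserted $\mathcal O(r_i(\epsilon_i+\mu_i+r_i))$ remainder, which holds because the prefactor $r_i^2\epsilon_i$ is strictly smaller than $r_i\epsilon_i$ for small $r_i$. The smallness conditions on $r_1,\epsilon_1,\mu_1$ are precisely those ensuring that the initial points lie in the domain of $\sigma_\epsilon^-$ given by \eqref{eq:xfMinus}--\eqref{eq:xfPlus} after unfolding the blowup.
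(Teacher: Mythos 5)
Your proposal is correct and follows essentially the same route as the paper's own (much terser) proof: substitute $x_1=-r_1$, $x_3=r_3$ into the expansions of Lemma~\ref{lemma:sigmaMaps} via Lemma~\ref{lemma:sigmaEpsilonMap0}, use conservation of $\epsilon$ and $\mu$ for the remaining components, and note that the smallness conditions guarantee \eqref{eq:xfPlus}--\eqref{eq:xfMinus}. Your version merely makes the error-term bookkeeping and the $(1+u)^{-1}$, $(1+u)^{-2}$ expansions explicit, which the paper leaves implicit.
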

\begin{proof}
 Consider the case $\sigma_\epsilon^-$ (the case $\sigma_\epsilon^+$ is identical). We then use \eqref{sigmaEpsilonMap} and \lemmaref{sigmaMaps} and set $x_1=-r_1$ and $x_3=r_3$ as described by the charts $\kappa_{1,3}$, respectively. The expressions for $\epsilon_3$ and $\mu_3$ then follow from the conservation of $\epsilon$ and $\mu$, respectively. The condition \eqsref{xfPlus}{xfMinus} are satisfied for $r_3$, $\epsilon_3$, $\mu_3$ and $r_1$ $\epsilon_1$, $\mu_1$, respectively, sufficiently small.
\end{proof}

We then consider the Poincar\'e mapping $P_\epsilon$ from \eqref{Pepsilon} used in \propref{propLimitCyclesReg} and write this mapping in chart $\kappa_3$. The resulting mapping is given by
\begin{align}
 P_3=\kappa_3 \circ P_\epsilon \circ \kappa_3^{-1} :\quad \kappa_3 \cap \{\hat y=1\}&\rightarrow \kappa_3 \cap \{\hat y=1\},\eqlab{P_3}\\
 (r_3,\epsilon_3,\mu_3)&\mapsto (r_3^+,\epsilon_3^+,\mu_3^+),\nonumber
\end{align}
We will compose $P_3$ into four different mappings $\sigma_{13}^+$, $\xi_1^-$, $\sigma_{31}^-$ and $\xi_3^+$ so that:
$$P_3=\xi_3^+\circ \sigma_{31}^-\circ \xi_1^- \circ \sigma_{13}^+.$$
The mappings $\sigma_{13}^+$ and $\sigma_{31}^-$ are described in \lemmaref{sigmaEpsilonMap}
while $\xi_{1}^{-}$ and $\xi_3^+$ are given in terms of the forward flow associated with the differential equations in charts $\kappa_1$ and $\kappa_3$ (see \eqsref{EqsChartKappa1}{EqsChartKappa3}) and map $\{y=\pm \epsilon\}$ ($\{\hat y=\pm 1\}$) to $\{y=\mp \epsilon\}$ ($\{\hat y=\mp 1\}$), respectively. Hence the mappings:
\begin{align}
 \xi_1^-:\{\hat y=1\}&\rightarrow \{\hat y=-1\},\eqlab{xi1Negative}\\
 (r_1,\epsilon_1,\mu_1)&\mapsto (r_1^-,\epsilon_1^-,\mu_1^-),
 \nonumber
 \end{align}
 and
 \begin{align}
 \xi_3^+:\{\hat y=-1\}&\rightarrow \{\hat y=1\},\eqlab{xi3Positive}\\
 (r_3,\epsilon_3,\mu_3)&\mapsto (r_3^+,\epsilon_3^+,\mu_3^+),
 \nonumber
\end{align}
are defined by the forward flow of the following equations:
\begin{align}
\dot r_1 &=-r_1 \epsilon_1 \tilde F_1(r_1,\hat y,\epsilon_1,\mu_1),\eqlab{newEqsChartKappa1}\\
\dot{\hat y} &= (-1+\mathcal O(r_1))(1+\phi(\hat y))+(\beta(1+ \mu_1)+\mathcal O(r_1))(1-\phi(\hat y)),\nonumber\\
\dot \epsilon_1 &=2\epsilon_1^2 \tilde F_1(r_1,\hat y,\epsilon_1,\mu_1),\nonumber\\
\dot \mu_1 &= \epsilon_1 \mu_1 \tilde F_1(r_1,\hat y,\epsilon_1,\mu_1),\nonumber\\ 
\end{align}
and
\begin{align}
 \dot r_3 &=r_3 \epsilon_3\tilde F_3(r_3,\hat y,\epsilon_3,\mu_3),\nonumber\\
 \dot{\hat y} &=(1+\mathcal O(r_3))(1+\phi(\hat y))+(-\beta(1-\mu_3)+\mathcal O(r_3))(1-\phi(\hat y))),\nonumber\\
 \dot \epsilon_3 &=-2\epsilon_3^2\tilde F_3(r_3,\hat y,\epsilon_3,\mu_3),\nonumber\\
 \dot \mu_3&=-\epsilon_3\mu_3  \tilde F_3(r_3,\hat y,\epsilon_3,\mu_3),\nonumber
 \end{align}
 respectively. These equations are equations \eqsref{EqsChartKappa1}{EqsChartKappa3}, respectively, written in terms of $\hat y$ rather than $w$, where
 \begin{align*}
 \tilde F_1(r_1,\hat y,\epsilon_1,\mu_1) &= (-1+\mathcal O(r_1))(1+\phi(\hat y))+(\alpha +\mathcal O(r_1))(1-\phi(\hat y)),\\
 \tilde F_3(r_3,\hat y,\epsilon_3,\mu_3) &= (-1+\mathcal O(r_3))(1+\phi(\hat y))+(\alpha +\mathcal O(r_3))(1-\phi(\hat y)).
\end{align*}
\begin{lemma}\lemmalab{xipmMap}
 Consider \eqref{II2Assumption} and let
 \begin{align*}
  E&=\int_{-1}^1 \frac{-(1+\phi(\hat y))+\alpha(1-\phi(\hat y))}{1+\phi(\hat y)-\beta(1-\phi(\hat y))}d\hat y.
 \end{align*}
 Then the mappings $\xi_1^-$, $\xi_3^+$ in \eqsref{xi1Negative}{xi3Positive} take the following form:
 \begin{align*}
  \xi_1^-:\,r_1^- &= r_1(1-E\epsilon_1+\mathcal O(r_1\epsilon_1)),\\
  \epsilon_1^-&=\epsilon_1(1+2E\epsilon_1 +\mathcal O(r_1\epsilon_1)),\\
  \mu_1^- &=\mu_1(1+E\epsilon_1 +\mathcal O(r_1\epsilon_1)),
  \end{align*}
  and 
   \begin{align*}
  \xi_3^+:\,r_3^+ &= r_3(1+E\epsilon_3+\mathcal O(r_3\epsilon_3)),\\
  \epsilon_3^+&=\epsilon_3(1-2E\epsilon_3 +\mathcal O(r_3\epsilon_3)),\\
  \mu_3^+ &=\mu_3(1-E\epsilon_3 +\mathcal O(r_3\epsilon_3)).
  \end{align*}
\end{lemma}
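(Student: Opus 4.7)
The strategy is to treat \eqref{newEqsChartKappa1} (and the analogous system in chart $\kappa_3$) as a slow-fast system in which $\hat y$ acts as a time-like coordinate while $(r_1,\epsilon_1,\mu_1)$ are slow and evolve at rate $\mathcal O(\epsilon_1)$. First I would verify that, under \eqref{II2Assumption} (in particular $\beta<0$), the leading-order factor
\[
\dot{\hat y}\big|_{r_1=\mu_1=0}=-(1+\phi(\hat y))+\beta(1-\phi(\hat y))
\]
is strictly negative on $\hat y\in[-1,1]$, equalling $-2$ at $\hat y=1$ and $2\beta<0$ at $\hat y=-1$, and thus bounded away from zero. By continuity this sign property persists for $(r_1,\mu_1)$ sufficiently small, so $\hat y$ is a genuine time-like variable along the transit from $\hat y=1$ to $\hat y=-1$, which takes time $\mathcal O(1)$.

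Dividing the three slow equations in \eqref{newEqsChartKappa1} by $\dot{\hat y}$ yields
\begin{align*}
\frac{dr_1}{d\hat y}=\frac{-r_1\epsilon_1\tilde F_1}{\dot{\hat y}},\qquad
\frac{d\epsilon_1}{d\hat y}=\frac{2\epsilon_1^2\tilde F_1}{\dot{\hat y}},\qquad
\frac{d\mu_1}{d\hat y}=\frac{\epsilon_1\mu_1\tilde F_1}{\dot{\hat y}},
\end{align*}
all of whose right-hand sides are $\mathcal O(\epsilon_1)$. A standard Gronwall estimate across an $\hat y$-interval of length $2$ then shows that $(r_1,\epsilon_1,\mu_1)$ vary by at most $\mathcal O(\epsilon_1)$ on the transit. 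Plugging back the crude estimates $r_1(\hat y)=r_1+\mathcal O(r_1\epsilon_1)$, $\mu_1(\hat y)=\mu_1+\mathcal O(\mu_1\epsilon_1)$, $\epsilon_1(\hat y)=\epsilon_1+\mathcal O(\epsilon_1^2)$ replaces $\tilde F_1$ by $-(1+\phi(\hat y))+\alpha(1-\phi(\hat y))$ and $\dot{\hat y}$ by $-[1+\phi(\hat y)-\beta(1-\phi(\hat y))]$ inside the integrands. Reversing the orientation of the integral from $\hat y=1$ to $\hat y=-1$ produces precisely the constant $E$ defined in the statement, yielding $\epsilon_1^-=\epsilon_1(1+2E\epsilon_1+\mathcal O(r_1\epsilon_1))$, $r_1^-=r_1(1-E\epsilon_1+\mathcal O(r_1\epsilon_1))$, and $\mu_1^-=\mu_1(1+E\epsilon_1+\mathcal O(r_1\epsilon_1))$, matching the claim for $\xi_1^-$.

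The map $\xi_3^+$ is handled identically. Under \eqref{II2Assumption} one has $\dot{\hat y}\big|_{r_3=\mu_3=0}=(1+\phi)-\beta(1-\phi)>0$, again bounded away from zero on $[-1,1]$, so $\hat y$ is monotone increasing from $-1$ to $1$ along the transit. Because the signs of $\dot r_3, \dot\epsilon_3, \dot\mu_3$ in the $\kappa_3$ system are each the opposite of their $\kappa_1$ counterparts while the direction of the $\hat y$-integration is also reversed, the same integrand appears and one recovers the same constant $E$, producing the stated expressions for $(r_3^+,\epsilon_3^+,\mu_3^+)$. There is no real obstacle in this argument; the only point that requires attention is the uniform non-vanishing of $\dot{\hat y}$ on the transit, which is an immediate consequence of $\beta<0$, and the bookkeeping of signs when reducing the integrals to $E$.
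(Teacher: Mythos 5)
Your proposal is correct and follows essentially the same route as the paper: the paper's proof also uses $\beta<0$ to guarantee $\dot{\hat y}\neq 0$, replaces time by $\hat y$ by dividing the $(r_1,\epsilon_1,\mu_1)$-equations by $\dot{\hat y}$, and then expands about the fixed point $(r_1,\epsilon_1,\mu_1)=0$ ("solving the second order variational equations"), which is exactly your leading-order substitution and integration yielding $E$; your Gronwall step and sign bookkeeping just spell out that expansion.
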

\begin{proof}
 Consider $\xi_1^-$ and \eqref{newEqsChartKappa1} (the analysis for $\xi_3^+$ is identical). Since $\beta<0$ we have that $\dot{\hat y}<0$ for $\hat y\in [-1,1]$ for $r_1=\mu_1=0$ and hence we can replace time by $\hat y$ by dividing the equations for $\dot r_1$, $\dot \epsilon_1$ and $\dot \mu_1$ by $\dot{\hat y}$. The point $(r_1,\epsilon_1,\mu_1)=0$ is a fixed point of these equations. Solving the second order variational equations gives the desired result.
\end{proof}

We then have:
\begin{lemma}
 Consider \eqref{II2Assumption}. The Poincar\'e map $P_3:\,(r_3,\epsilon_3,\mu_3)\mapsto (r_3^+,\epsilon_3^+,\mu_3^+)$ takes the following form:
 \begin{align}
  P_3:\,r_3^+ &= r_3(1+2\mu_3+\Delta_{II_2}r_3+\mathcal O(r_3(\epsilon_3+\mu_3+r_3)), \eqlab{r3Plus}\\
  \epsilon_3^+&=\epsilon_3(1-2(2\mu_3+\Delta_{II_2}r_3)+\mathcal O(r_3(\epsilon_3+\mu_3+r_3)),\nonumber\\
  \mu_3^+&=\mu_3(1-(2\mu_3+\Delta_{II_2} r_3)+\mathcal O(r_3(\epsilon_3+\mu_3+r_3)),\nonumber
 \end{align}
where $\Delta_{II_2}$ is given by \eqref{Ceqn}.
\end{lemma}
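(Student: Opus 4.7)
By construction $P_3=\xi_3^+\circ\sigma_{31}^-\circ\xi_1^-\circ\sigma_{13}^+$, and all four factors have been explicitly computed to the required order in \lemmaref{sigmaEpsilonMap} and \lemmaref{xipmMap}. The plan is therefore purely a composition argument: substitute the four maps into one another, expand, and keep only terms up to $\mathcal O(r_3(\epsilon_3+\mu_3+r_3))$. Throughout I will use the fact that the charts respect the invariants $\epsilon=r_i^2\epsilon_i$ and $\mu=r_i\mu_i$, which is visible in each of the four map formulae (this is also guaranteed by the $\epsilon'=\mu'=0$ equations in \eqref{ExtendedProblem1}). Consequently it suffices to track $r_3^+$ with care; the formulae for $\epsilon_3^+$ and $\mu_3^+$ then follow automatically from $r_3^2\epsilon_3=(r_3^+)^2\epsilon_3^+$ and $r_3\mu_3=r_3^+\mu_3^+$ by expanding the reciprocal $r_3/r_3^+$.

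First I would apply $\sigma_{13}^+$ to obtain, to leading order, $r_1=r_3(1-A^+r_3+\cdots)$, $\epsilon_1=\epsilon_3(1+2A^+r_3+\cdots)$, $\mu_1=\mu_3(1+A^+r_3+\cdots)$, and then feed these into $\xi_1^-$ to get $r_1^-=r_3(1-A^+r_3-E\epsilon_3+\cdots)$, and similarly for $\epsilon_1^-$, $\mu_1^-$ (here it is enough that $\epsilon_1=\epsilon_3+\mathcal O(r_3\epsilon_3)$, so the $E\epsilon_1$ contribution from $\xi_1^-$ is the same as $E\epsilon_3$ at the order we work). Next, $\sigma_{31}^-$ turns the result at $\hat y=-1$ into a point in $\kappa_3$-coordinates whose $r_3$ component is $r_1^-(1+2\mu_1^-+A^-r_1^-+\cdots)$. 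Finally, $\xi_3^+$ contributes a factor $(1+E\epsilon_3'+\cdots)$. Collecting all factors and using once more that, at the orders we retain, $\epsilon$ is the same on both sides of $\tilde q$, the $-E\epsilon_3$ introduced by $\xi_1^-$ is cancelled by the $+E\epsilon_3$ introduced by $\xi_3^+$, and analogously for $\mu_1^-\approx\mu_3$. What remains is
\begin{equation*}
r_3^+=r_3\bigl(1-A^+r_3\bigr)\bigl(1+2\mu_3+A^-r_3\bigr)+\mathcal O(r_3(\epsilon_3+\mu_3+r_3))=r_3\bigl(1+2\mu_3+(A^--A^+)r_3\bigr)+\cdots,
\end{equation*}
which is the stated $r_3$-equation with $\Delta_{II_2}=A^--A^+$ as in \eqref{Ceqn}.

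To conclude, I would derive $\epsilon_3^+$ and $\mu_3^+$ from the invariants. Writing $r_3^+=r_3(1+\Delta)$ with $\Delta\equiv 2\mu_3+\Delta_{II_2}r_3+\mathcal O(r_3(\epsilon_3+\mu_3+r_3))$, one has $(r_3/r_3^+)^k=1-k\Delta+\mathcal O(\Delta^2)$, so $\epsilon_3^+=\epsilon_3(1-2\Delta+\cdots)$ and $\mu_3^+=\mu_3(1-\Delta+\cdots)$, which are exactly the two remaining formulae in \eqref{r3Plus}.

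The only subtle step is the cancellation of the $E$-contributions, so that is the main obstacle: one must verify that the higher-order corrections in $\epsilon_1$ versus $\epsilon_3$ (and analogously on the $\xi_3^+$ side) are absorbed into the $\mathcal O(r_3(\epsilon_3+\mu_3+r_3))$ remainder. This follows from the explicit error bounds in \lemmaref{sigmaEpsilonMap} and \lemmaref{xipmMap}, since any uncancelled $E$-term is at worst of the form $E\cdot r_3\epsilon_3\cdot(\text{bounded})$, which is already within the stated error. Everything else is a bookkeeping exercise.
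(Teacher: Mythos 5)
Your proposal is correct and takes essentially the same route as the paper: the paper's proof is exactly the composition $P_3=\xi_3^+\circ\sigma_{31}^-\circ\xi_1^-\circ\sigma_{13}^+$ using \lemmaref{sigmaEpsilonMap} and \lemmaref{xipmMap}, with the $\phi$-dependent $E\epsilon$-contributions cancelling at the relevant order and $\Delta_{II_2}=A^--A^+$ emerging from the product $(1-A^+r_3)(1+2\mu_3+A^-r_3)$, just as you spell out. Your shortcut of obtaining the $\epsilon_3^+$ and $\mu_3^+$ rows from the conserved quantities $\epsilon=r_3^2\epsilon_3$ and $\mu=r_3\mu_3$ is the same device the paper itself uses to derive those rows in \lemmaref{sigmaEpsilonMap}, and your error bookkeeping is no looser than the paper's own.
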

\begin{proof}
 We use \lemmaref{xipmMap} and \lemmaref{sigmaEpsilonMap}.
\end{proof}
\begin{remark}
 Note that to leading order \eqref{r3Plus} is independent of $E$ and hence of $\phi$, the regularization function. Hence $\phi$ does not induce bifurcations in the transition from sufficiently large limit cycles (meaning $\nu$ sufficiently small) in chart $\kappa_2$ to the $\mathcal O(1)$ limit cycles. 
\end{remark}

Next, we solve for fixed points of $P_3$ and obtain:
\begin{proposition}\proplab{fixPointP3}
 Suppose that $\Delta_{II_2}$ in \eqref{Ceqn} is non-zero. Then for $\mu_3,\,\epsilon_3$ sufficiently small and $\mu_3\Delta_{II_2}^{-1}<0$, the mapping $P_3$ has a locally unique family of fixed points:
 \begin{align}
  r_3&=-2\mu_3 \Delta_{II_2}^{-1}+\mathcal O(\mu_3(\epsilon_3+\mu_3)).\eqlab{r3FixPoint}
 \end{align}
 The family of fixed point of $P_3$ corresponds to a $C^k$-smooth family of periodic orbits which are attracting (repelling) for $\Delta_{II_2}<0$ ($\Delta_{II_2}>0$).
\end{proposition}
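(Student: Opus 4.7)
The plan is to reduce $P_3$ to a one-dimensional return map by exploiting conservation, then apply the implicit function theorem and linearize. The first key observation is that the original parameters $\epsilon$ and $\mu$ are conserved under the regularized flow, and a direct inspection of \eqref{r3Plus} confirms that $P_3$ preserves the quantities $r_3^2\epsilon_3=\epsilon$ and $r_3\mu_3=\mu$: writing $r_3^+=r_3(1+\rho)$ with $\rho=2\mu_3+\Delta_{II_2}\,r_3+\mathcal O(r_3(\epsilon_3+\mu_3+r_3))$, the stated formulas for $\epsilon_3^+$ and $\mu_3^+$ agree with $\epsilon/(r_3^+)^2$ and $\mu/r_3^+$ up to the claimed remainder. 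Fixed points of $P_3$ in $\{r_3>0\}$ are therefore characterized by the single scalar equation $\rho=0$, with the remaining two equations automatic.

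Second, I would view
\[F(r_3,\mu_3,\epsilon_3) := 2\mu_3 + \Delta_{II_2}\,r_3 + \mathcal O(r_3(\epsilon_3+\mu_3+r_3))\]
as a smooth function vanishing at the origin with $\partial_{r_3}F(0,0,0)=\Delta_{II_2}\neq 0$ by hypothesis. The implicit function theorem then yields a $C^k$-smooth, locally unique root
\[r_3=-2\mu_3\Delta_{II_2}^{-1}+\mathcal O(\mu_3(\epsilon_3+\mu_3)),\]
which is positive precisely for $\mu_3\Delta_{II_2}^{-1}<0$; this is the family \eqref{r3FixPoint}. The corresponding periodic orbits of the regularized system inherit $C^k$-smoothness from $P_3$, which is $C^k$ by \lemmaref{sigmaEpsilonMap} and the analogous statement about $\xi_{1,3}^\mp$, together with smooth dependence on initial conditions for the flows of $X^\pm$.

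For stability, I would substitute the conservation constraints $\mu_3=\mu/r_3$ and $\epsilon_3=\epsilon/r_3^2$ into \eqref{r3Plus} to produce a genuinely one-dimensional map
\[r_3\mapsto r_3^+=r_3+2\mu+\Delta_{II_2}\,r_3^{2}+\mathcal O(\epsilon+\mu r_3+r_3^3)\]
on each leaf $(\mu,\epsilon)=\text{const}$, and differentiate at the fixed point. Using $\mu=-\Delta_{II_2}\,r_3^2/2+\mathcal O(r_3^3)$ on the fixed-point branch, one obtains
\[\left.\frac{\partial r_3^+}{\partial r_3}\right|_{\mathrm{fix}}=1+2\Delta_{II_2}\,r_3+\mathcal O(r_3^2),\]
which for $r_3>0$ sufficiently small lies in $(0,1)$ when $\Delta_{II_2}<0$ and strictly exceeds $1$ when $\Delta_{II_2}>0$. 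The associated periodic orbit is therefore attracting in the former case and repelling in the latter. I do not anticipate any substantive obstacle; the argument is driven entirely by the explicit leading-order form of $P_3$ already recorded in \eqref{r3Plus}, and the only subtlety is to carry out the linearization along the conservation leaves rather than naively in the three coordinates $(r_3,\epsilon_3,\mu_3)$.
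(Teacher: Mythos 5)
Your proposal is correct and follows essentially the same route as the paper's proof: using conservation of $\epsilon=r_3^2\epsilon_3$ and $\mu=r_3\mu_3$ to reduce the fixed-point problem to the single equation $r_3^+=r_3$ in \eqref{r3Plus}, solving it by the implicit function theorem, and reading off stability from the derivative of the return map. The only difference is in the stability step, where your linearization along the conservation leaves (giving multiplier $1+2\Delta_{II_2}r_3+\dots$, consistent with the PWS multiplier $P_0'(x_0)=1+2\Delta_{II_2}x_0+\dots$) is a slightly more careful version of the paper's shortcut via $\partial_{r_3}r_3^+$ at fixed $(\epsilon_3,\mu_3)$; both give the same sign conclusion.
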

\begin{proof}
 Suppose that $(r_3,\mu_3,\epsilon_3)$ is a fixed point of $P_3$. Then since $\mu=r_3\mu_3$ and $\epsilon=r_3\epsilon_3$ we solve for $r_3=r_3(\mu_3,\epsilon_3)$ by setting $r_3^+=r_3$ in \eqref{r3Plus}. This gives
 \begin{align*}
  2\mu_3+\Delta_{II_2}r_3+\mathcal O(r_3(\epsilon_3+\mu_3+r_3))=0.
 \end{align*}
We solve this equation by the implicit function theorem and obtain \eqref{r3FixPoint}. The statement about stability follows from the fact that the sign of $\Delta_{II_2}$ determines the sign of $\partial_{r_3}r_3^+=r_3(\Delta_{II_2}+\mathcal O(\epsilon_3+\mu_3+r_3))$.
\end{proof}

The periodic orbits in chart $\kappa_2$, described by \propref{largeCyclesK2}, that are due to the Hopf bifurcation, are locally unique since they are obtained by the implicit function theorem for $\mu_2=r_2=0$. These orbits can be continued all the way up to the section $\Lambda_2^+$, defined in \eqref{Lambda2pm} (see also \remref{POBeyondLambda2}). The periodic orbits that are $\mathcal O(1)$ with respect to $\mathcal O(\epsilon)$, described by \propref{propLimitCyclesReg}, are also locally unique by virtue of the implicit function theorem. Therefore by setting $\epsilon_3=\nu^2$, corresponding to the section $\Lambda_2^+$ \eqref{Lambda2pm}, and taking $\nu$ sufficiently small, we obtain:
  \begin{align}
 x_2=\nu^{-1},\,\hat y=1,\,\mu_2 = -\frac{\Delta_{II_2}}{2\nu^2}r_2+\mathcal O(r_2^2),\eqlab{P3Lambda2Plus}
 \end{align}
using \eqsref{kappa32}{r3FixPoint}. Therefore we can conclude that the periodic orbits described by \eqref{r3FixPoint} coincide with the locally unique ones in chart $\kappa_2$ described by \propref{largeCyclesK2}. Similarly, setting $r_3=\rho$, corresponding to section $\Lambda^+$ \eqref{Lambdapm}, shows that the periodic orbits in \eqref{r3FixPoint} coincide with those in \propref{propLimitCyclesReg}, where these are defined. This gives a $C^k$-smooth and locally unique family of periodic orbits as described by Theorem~\ref{mainLimitCycles}, case $II_2^\epsilon$. 

\begin{remark}\remlab{novelityCharts}
We believe that the application of the directional charts $\kappa_{1,3}$ in this section to describe the Poincar\'e-mapping $P_\epsilon$ is a novelty. The coordinates of $\kappa_{1,3}$ enabled us to connect the periodic orbits in chart $\kappa_2$ with the larger periodic orbits without the need for careful estimation. This is a general advantage of the blowup method and the phase directional charts $\kappa_{1,3}$. Having said that, it might be possible to prove the connection of the limit cycles in $\kappa_2$ with the larger limit cycles in \propref{propLimitCyclesReg} by working in chart $\kappa_2$ alone. To do this one would, however, have to perform a careful estimation of the function $D$ in \eqref{DistanceFunction}. The following remark, \remref{finalRemark}, contains a discussion of this issue.
\end{remark}
\begin{remark}\remlab{finalRemark}
 By \eqref{kappa32} it follows that the family of fixed points of $P_3$ described in \propref{fixPointP3} intersects $\Lambda_2^+$ in 
\eqref{P3Lambda2Plus}.
By Theorem~\ref{mainLimitCycles}, case $II_2^\epsilon$, this fixed point of $P_3$ corresponds to a periodic orbit obtained from \propref{largeCyclesK2} for a corresponding value of the energy constant $h$. Hence the value of $\mu_2$ in \eqref{P3Lambda2Plus} must agree with the value given in \eqref{mu2largeCyclesK2}. This is a corollary of Theorem~\ref{mainLimitCycles}, case $II_2^\epsilon$. One may expect that there could be a more direct way of showing this. We now outline a formal derivation of the result by approximation the integrals in $D_{r_2}$ and $D_{\mu_2}$ in \eqref{mu2largeCyclesK2}. 

Since $x_2=\nu^{-1}$ in \eqref{P3Lambda2Plus} we take $h=\frac{1}{2\nu^2}$. To compute $D_{r_2}$ \eqref{Dr2} we first substitute $dt = ((1+\phi)-\beta(1-\phi))x_2dy$ and integrate from $\hat y_0^h$ to $\hat y_1^h$. We then split this integration into (a) an integration from $\hat y=\hat y_0^h$ to $\hat y=0$ and (b) an integration from $\hat y=0$ to $\hat y=\hat y_0^h$. We then ignore the contribution from the region of regularization and simply set $\phi=\mp 1$ in the integrations (a) and (b), respectively. We apply the same approximation to the Hamiltonian $H(x_2,\hat y)$ and obtain the value of $\hat y_0^h$ and $\hat y_1^h$  from the equation $H(0,\hat y)=h$. Combining this gives the following approximation of $D_{r_2}$:
\begin{align*}
 D_{r_2} \approx \frac{2}{3\nu^3\beta \alpha}\left(\alpha(\eta^-+\beta \eta^+)+\beta (\zeta^-+\chi^-+\alpha(\zeta^++\chi^+))\right),
\end{align*}
which is valid for $\nu$ small. Hence
\begin{align*}
 D_{r_2} \approx \frac{\Delta_{II_2}}{\nu^3},
\end{align*}
 from \eqref{Ceqn}. For $D_{\mu_2}$, as defined in \eqref{Dmu2}, we use \eqref{Dmu2Large} and approximate $x_2$ by the constant value $\nu^{-1}$. This gives
\begin{align*}
 D_{\mu_2} \approx \frac{2}{\nu}.
\end{align*}
Hence, from \eqref{mu2largeCyclesK2}, we have
\begin{align*}
 \mu_2 \approx -\frac{\Delta_{II_2}}{2\nu^2}r_2,
\end{align*}
which agrees with \eqref{P3Lambda2Plus}. We have not pursued a rigorous result of this kind. 
\end{remark}
%

\subsubsection{Connecting limit cycles for case $VI_3^\epsilon$}\seclab{LCCVI3}
Krupa and Szmolyan \cite{krupa_relaxation_2001} describe the classical canard explosion phenomenon, as observed in the van der Pol system. They prove that the periodic orbits within their chart $\kappa_2$ belong to a smooth and unique family of local periodic orbits that also includes $\mathcal O(1)$ periodic orbits that arise from their canard explosion. 
Their proof involves careful estimation on the dependency of the function $D(r_2,\mu_2,h)$, as defined in \eqref{DistanceFunction}, on the distance to the singular canard (measured by the energy constant $h$). %
It seems plausible that a similar analysis could be performed to our system. However, the situation here is complicated by the fact that our Hamiltonian function $H$ depends non-trivially on the regularization function $\phi$. The statements in Theorem~\ref{mainLimitCycles}, case $VI_3^\epsilon$, therefore summarize the previous results in \propref{largeCyclesK2} and \propref{canardExplosion}. Instead we conjecture on the connection of small periodic orbits in \propref{largeCyclesK2} with the larger ones in \propref{canardExplosion} as follows:
\begin{con}\label{mainConjecture1}
The two families of periodic orbits in $VI_3^\epsilon$  belong to the same family of locally unique periodic orbits. 
\end{con}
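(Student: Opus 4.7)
The plan is to adapt the canard explosion analysis of Krupa and Szmolyan \cite{krupa_relaxation_2001} to the regularized setting, extending the Melnikov-based continuation of \propref{largeCyclesK2} through the canard regime. In chart $\kappa_2$, the Hamiltonian limit system \eqref{mu2r2eqn} admits a one-parameter family of periodic orbits parametrised by the energy $h$, with a critical value $h_c$ corresponding to the orbit that coincides, at $\mu_2=r_2=0$, with the invariant line $l_2$ at $\hat y = \hat y_c$; by \eqref{phiy2c} and \eqref{J}, this is precisely the level at which the symplectic structure $J$ vanishes. First I would enlarge the admissible range of $h$ in the distance function $D(r_2,\mu_2,h)$ of \eqref{DistanceFunction} to cover $h\in[h_0,h_c-\kappa)$ for some small $\kappa=\kappa(r_2)$, so that the zero set of $D$ tracks periodic orbits all the way up to the canard.

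The crucial step is to establish uniform asymptotic expansions of $D_{r_2}(h)$ and $D_{\mu_2}(h)$ as $h\uparrow h_c$, and to show that the ratio $D_{r_2}(h)/D_{\mu_2}(h)$ is a $C^k$ function of a suitably rescaled energy variable. An application of the implicit function theorem would then produce a smooth branch $\mu_2=\mu_{2,p}(h,r_2)$ of zeros of $D$, which by Theorem~\ref{canardTheorem} must satisfy $\mu_{2,p}(h_c,0)=\mu_{2,c}(0)$. To complete the connection with the $\mathcal O(1)$ canard-explosion orbits of \propref{canardExplosion}, I would match the two families at a transverse section, in the spirit of \secref{LCCII2}, but now using a section intersecting a Fenichel slow manifold beyond $\Lambda_2^{-}$, where the orbit picks up an arc on $S_{a,\epsilon}$ (and, symmetrically, on $S_{r,\epsilon}$). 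Since both branches are obtained via the implicit function theorem, and both coincide with the maximal canard of Theorem~\ref{canardTheorem} at $\mu=\sqrt{\epsilon}\mu_{2,c}$ up to exponentially small terms, local uniqueness would force them to belong to the same smooth family.

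The main obstacle is the divergence of the period $T^h$ as $h\uparrow h_c$, together with the fact that the integrand of $D_{r_2}$ in \eqref{Dr2} contains the factor $(\delta(1+\phi(\hat y))+\alpha(1-\phi(\hat y)))/(\beta(1-\phi(\hat y))-(1+\phi(\hat y)))$, which has a pole precisely at $\hat y=\hat y_c$. In the classical Krupa--Szmolyan analysis the analogous integrals generate logarithmic terms in $h_c-h$ that are balanced against a similarly divergent denominator integral, producing a finite, smooth ratio. The corresponding renormalisation here is complicated by the fact that the Hamiltonian \eqref{HchartK2} depends non-trivially on the regularisation function $\phi$, so the asymptotics of both Melnikov integrals will inherit $\phi$-dependent coefficients. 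Carrying out this renormalisation rigorously, and demonstrating smoothness of the combined family across the canard value, is exactly what leaves the assertion a conjecture rather than a theorem.
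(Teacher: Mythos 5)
There is nothing in the paper for your argument to match: the statement you are asked to prove is deliberately left as Conjecture~\ref{mainConjecture1}. In \secref{LCCVI3} the authors themselves sketch exactly the route you describe --- extend the Krupa--Szmolyan continuation of roots of the distance function $D(r_2,\mu_2,h)$ from \eqref{DistanceFunction} up to the canard regime and match with the $\mathcal O(1)$ cycles of \propref{canardExplosion} --- and they state that the obstruction is the non-trivial dependence of the Hamiltonian \eqref{HchartK2} on the regularization function $\phi$. Your proposal reproduces this plan and then, in its final paragraph, concedes that the decisive step (uniform asymptotics of $D_{r_2}(h)$ and $D_{\mu_2}(h)$ near the canard and smoothness of their ratio) is not carried out. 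That step is not a technicality to be deferred; it is the entire content of the conjecture, so what you have written is an accurate restatement of the open problem rather than a proof of it.

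Beyond that, the framing has a concrete flaw you would have to repair before the renormalisation could even be set up. There is no finite critical energy $h_c$: the integrand of $H$ in \eqref{HchartK2} has a simple pole at $\hat y=\hat y_c$ (the level where $J$ in \eqref{J} vanishes and where the numerator is proportional to $\Omega\neq 0$), so $H(0,\hat y)\to+\infty$ logarithmically as $\hat y\uparrow \hat y_c$, and the Hamiltonian periodic orbits approach the line $l_2$ only as $h\to\infty$, with $T^h\to\infty$ as well. An expansion in powers and logarithms of $h_c-h$ is therefore not available as stated; one must reparametrise (for instance by $\hat y_1^h$ or by the amplitude) and control Melnikov integrals that are themselves divergent, with $\phi$-dependent coefficients. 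Likewise, the matching step cannot be done ``in the spirit of \secref{LCCII2}'': in $II_2^\epsilon$ the transition maps $\sigma^\pm_\epsilon$ are regular $\mathcal O(\epsilon)$-perturbations of the PWS maps because the orbits only cross, whereas in $VI_3^\epsilon$ the candidate cycles contain slow segments on $S_{a,\epsilon}$ and pass close to $S_{r,\epsilon}$, so the relevant transition maps are exponentially contracting/expanding and must be handled by a way-in/way-out (entry--exit) argument as in \cite{krupa_relaxation_2001}, redone here with the $\phi$-dependent structure. Until those estimates are supplied, the statement remains, as in the paper, a conjecture.
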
 

The limit cycles in \propref{largeCyclesK2} and \propref{canardExplosion} do not seem to be present in the PWS case $VI_3$.

\subsection{Saddle-node bifurcation}
We conclude this section with the following main result
\begin{theorem}\label{saddleNode}
 Suppose \eqref{assumption1} and \eqref{limitCycleAssumption}. Then for $\epsilon$ sufficiently small:
  \begin{itemize}
   \item[$II_2^\epsilon$:] There exists an open set of regularization functions such that the periodic orbits in Theorem~\ref{mainLimitCycles}, case $II_2^\epsilon$, undergo at least one saddle-node bifurcation.
   \item[$VI_3^\epsilon$:] Suppose, in addition, that Conjecture~\ref{mainConjecture1} holds. Then there exists an open set of regularization functions such that the periodic orbits in Theorem~\ref{mainLimitCycles}, case $VI_3^\epsilon$, undergo at least one saddle-node bifurcation.
  \end{itemize}
\end{theorem}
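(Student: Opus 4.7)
The plan is to exploit the freedom granted by Theorem~\ref{K2sign} in choosing the regularization function $\phi$ to produce a mismatch in stability between the small limit cycles born at the Hopf bifurcation in Theorem~\ref{mainHopf} and the $\mathcal O(1)$-amplitude limit cycles, and then to invoke continuity along the smooth family provided by Theorem~\ref{mainLimitCycles} (respectively Conjecture~\ref{mainConjecture1}) to force at least one fold of cycles.

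First I would observe that $\Delta_{II_2}$ and $\Delta_{VI_3}$, defined in \eqref{Ceqn} and \eqref{Beqn}, depend only on the PWS coefficients $\alpha,\beta,\delta,\zeta^{\pm},\chi^{\pm},\eta^{\pm}$, so their signs are fixed once the PWS data are chosen and are completely independent of $\phi$. By contrast, Theorem~\ref{K2sign} shows that, under assumption \eqref{assumption1}, the sign of the first Lyapunov coefficient $a_2$ may be prescribed at will by adjusting $\phi_{3,H}$ while keeping $\phi_{1,H}>0$ and $\phi_{2,H}$ fixed. Because $a_2$ depends continuously (in fact affinely) on the triple $(\phi_{1,H},\phi_{2,H},\phi_{3,H})$ through \eqref{K2}, both sets $\{\phi:a_2>0\}$ and $\{\phi:a_2<0\}$ are open in a suitable $C^k$ topology on the space of admissible regularization functions. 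I then restrict attention to those $\phi$ in the open set for which $\mathrm{sign}(a_2)\neq \mathrm{sign}(\Delta_{II_2})$ in case $II_2^\epsilon$, respectively $\mathrm{sign}(a_2)\neq \mathrm{sign}(\Delta_{VI_3})$ in case $VI_3^\epsilon$. By the stability statements of Theorem~\ref{mainLimitCycles}, this choice ensures that the cycles born at the Hopf bifurcation and the $\mathcal O(1)$-amplitude cycles have opposite stability types (one attracting, the other repelling).

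The final step is to use Theorem~\ref{mainLimitCycles} (case $II_2^\epsilon$), or Conjecture~\ref{mainConjecture1} (case $VI_3^\epsilon$), which asserts that these two regimes of cycles lie on the same $C^k$-smooth, locally unique family. Parametrize the family intrinsically by a smooth variable $s$, so that both the cycle and the parameter value $\mu=\mu(s)$ vary smoothly with $s$, and note that the Floquet multiplier $\lambda(s)>0$ depends continuously on $s$ with $\lambda<1$ at one end and $\lambda>1$ at the other. By the Intermediate Value Theorem there exists $s_*$ with $\lambda(s_*)=1$. Since the regularized system is planar, $\lambda$ is strictly positive throughout, so no period-doubling can intervene; hence loss of hyperbolicity at $s_*$ generically corresponds to a fold of the map $s\mapsto\mu(s)$, that is, to a saddle-node bifurcation of limit cycles. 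This proves the theorem.

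The main obstacle is the reliance on Conjecture~\ref{mainConjecture1} in the $VI_3^\epsilon$ case: without it, one does not know a priori that the small Hopf cycles and the large canard-explosion cycles of \propref{canardExplosion} belong to a single smooth family, and the intermediate-value argument cannot be launched. A secondary, more technical issue is to verify that the perturbation of $\phi_{3,H}$ used in Theorem~\ref{K2sign} can be realized within natural constraints on $\phi$ (oddness, monotonicity properties on $(-1,0)$ and $(0,1)$, etc.); the remarks following Theorem~\ref{K2sign} indicate that such compatibility is generic, so the openness of the set of admissible $\phi$ is preserved.
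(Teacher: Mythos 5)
Your proposal is correct and follows essentially the same route as the paper: invoke Theorem~\ref{K2sign} (under \eqref{assumption1}) to choose $\phi$ so that $a_2\Delta_{II_2}<0$, respectively $a_2\Delta_{VI_3}<0$, and then let the opposite stabilities of the small Hopf cycles and the $\mathcal O(1)$ cycles along the single smooth family of Theorem~\ref{mainLimitCycles} (with Conjecture~\ref{mainConjecture1} in case $VI_3^\epsilon$) force a fold of limit cycles. Your added remarks on openness of the set of admissible $\phi$ and on the role of the conjecture merely flesh out details the paper leaves implicit.
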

\begin{proof}
A corollary of Theorem~\ref{K2sign} is that we can always achieve $a_2\Delta_{II_2}<0$ or $a_2\Delta_{VI_3}<0$. The result therefore follows from the statements in Theorem~\ref{mainLimitCycles} and Conjecture~\ref{mainConjecture1}. 
\end{proof}

\section{Numerics}\seclab{numerics}
In this section we illustrate the results in Theorem~\ref{saddleNode}, and provide further support for Conjecture~\ref{mainConjecture1}, by computing limit cycles for two model systems, for cases $II_2^\epsilon$ and $VI_3^\epsilon$. 
\subsection{Case $II_2^\epsilon$}\seclab{II2Numerics}
In this section we consider the regularization of the following model system for case $II_2$:
\begin{align}
 X^+(x,y) = \begin{pmatrix}
             -1-7x\\
             x+2x^2
            \end{pmatrix},\quad X^-(x,y,\mu) = \begin{pmatrix}
            1-6 x\\
            (x-\mu)-2(x-\mu)^2
            \end{pmatrix},\eqlab{II2model}
\end{align}
corresponding to the following parameters:
\begin{align*}
 \delta = -1,\,\alpha = 1,\,\beta = -1,\,\zeta^+=-7,\,\zeta^-=-6,\,\eta^{\pm} = \pm 2,\,\chi^\pm = 0,\,\Omega=2,
\end{align*}
in \eqsref{normplus2}{normminus2}. 
The constant $\Delta_{II_2}$ given in \eqref{Ceqn} takes the value:
\begin{align*}
 \Delta_{II_2} = -6.
\end{align*}
According to \propref{propLimitCyclesPWS} the limit cycles of the PWS system are therefore all stable. According to Theorem~\ref{mainLimitCycles} the $\mathcal O(1)$ (with respect to $\epsilon$) limit cycles of the regularized system are also stable. 

We consider two regularization functions\footnote{The superscripts $l$ and $c$ refer to linear and cubic, respectively.}:
\begin{align}
 \phi^l(\hat y) =\hat y\quad \text{for}\quad \hat y\in (-1,1),\eqlab{phiLinear}
\end{align}
and 
\begin{align}
 \phi^c(\hat y) = \frac32\hat y-\frac12 \hat y^3 \quad \text{for}\quad  \hat y\in (-1,1).\eqlab{phiCubic}
\end{align}
From \eqref{eqy20Star} we obtain $\hat y_0^*=0$ in both cases. Inserting the corresponding values of $\phi_{1,H}=\phi'(0),\, \phi_{2,H}=\phi''(0)=0,\,\phi_{3,H}=\phi^{(3)}(0)$ into \eqsref{mu2H}{K2} gives the following values for $\mu_{2,H}$ and $a_2$:
\begin{align}
 \mu_{2,H}^l &= 13r_2+\mathcal O(r_2^2),\eqlab{mu2Hlc}\\
 a_2^l&=\frac12,\nonumber\\
 \mu_{2,H}^c &= \frac{26}{3}r_2+\mathcal O(r_2^2)= (8+\frac23) r_2 + \mathcal O(r_2^2),\nonumber\\
 a_2^c  &= -\frac12.\nonumber
\end{align}
Since $a_2^l\Delta_{II_2} <0$ we have from Theorem~\ref{saddleNode} that the \textit{linear} regularization function \eqref{phiLinear} introduces a saddle-node bifurcation. We demonstrate this as follows.

Using the numerical bifurcation software AUTO we continued the periodic orbits in the two regularizations of $II_2$. In \figref{II2Amplitude} we show the amplitude (measured as $\max(\hat y)$) of the periodic orbits as a function of the parameter $\mu_2$ for $r_2=\sqrt{\epsilon}=0.01$. The full line shows the result of using $\phi_l$, as given in \eqref{phiLinear}, while the dotted line shows the result of using $\phi_c$, as given in \eqref{phiCubic}. The linear regularization function introduces a saddle-node bifurcation, in agreement with Theorem~\ref{saddleNode}. On the other hand, the \textit{cubic} regularization function does not introduce any saddle-node bifurcation. 

The Hopf bifurcations were numerically found to occur at
\begin{align*}
 \mu_{2,H}^l \approx 0.1298,\quad \mu_{2,H}^c \approx 0.0866,
\end{align*}
which are in good agreement with \eqref{mu2Hlc} for $r_2=0.01$.
Also, in agreement with \propref{propLimitCyclesReg}, we observed that the two family of limit cycles agree for larger values of $\mu_2$ since the limit cycles for both regularizations must be $\mathcal O(\epsilon)$-close to the limit cycles of the PWS system for $\mu=\mathcal O(1)$.

\begin{figure}\begin{center}\includegraphics[width=.65\textwidth]{./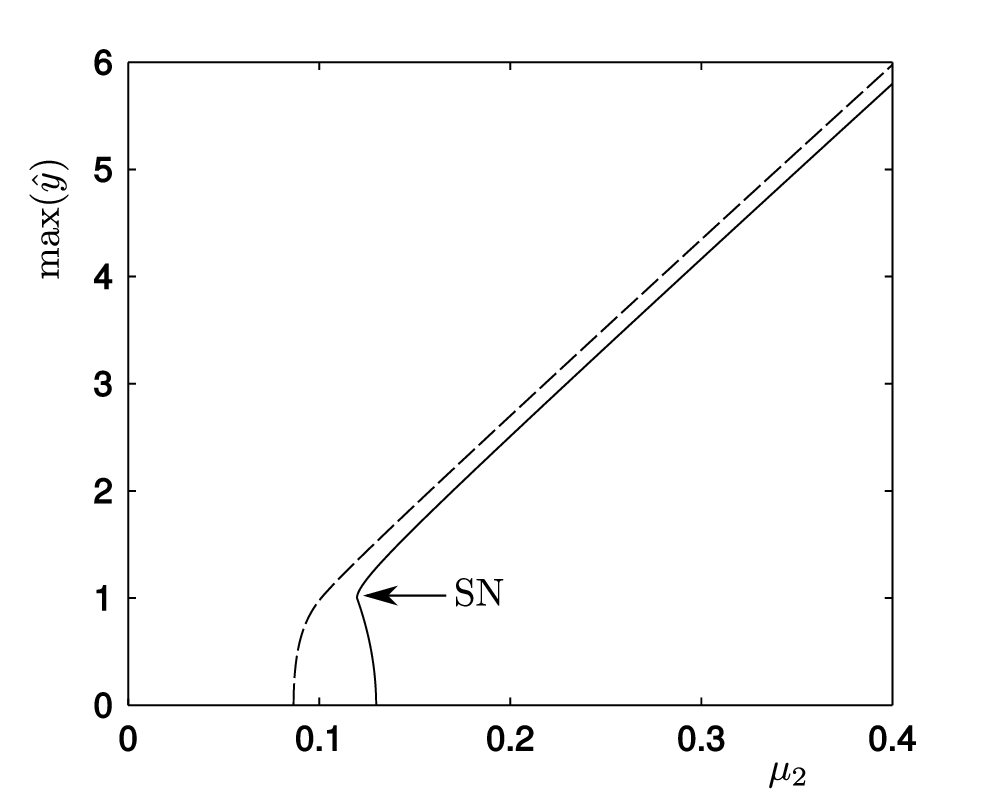}
\caption{Periodic orbit amplitudes as a function of the parameter $\mu_2$ for $r_2=0.01$. The full line shows the result of using the linear regularization function \eqref{phiLinear}, the dotted line shows the result of using the cubic regularization function \eqref{phiCubic}. As can be seen, the linear regularization function induces a saddle-node (SN) bifurcation, in agreement with Theorem~\ref{saddleNode}. 
}\figlab{II2Amplitude}
                		\end{center}
              \end{figure}

\begin{remark}
 We obtained system \eqref{II2model} by fixing \eqsref{phiLinear}{phiCubic} and the parameters $\delta,\,\alpha,\beta$ and solving $a_2^l=-a_2^c$ for $\zeta^\pm$ and $\eta^\pm$ setting $\chi^\pm=0$ for simplicity. 
\end{remark}

\subsection{Case $VI_3^\epsilon$}\seclab{numericsVI3}
In this section we consider the following model system for the case $VI_3$:
\begin{align}
 X^+ = \begin{pmatrix} 
        1 + \frac12 x\\
        x-x^3
       \end{pmatrix},\quad X^- = \begin{pmatrix}
       -1\\
       -2(x-\mu)+(x-\mu)^2
       \end{pmatrix}.
       \eqlab{system1}
\end{align}
Here 
\begin{align*}
 \delta = 1,\,\alpha = -1,\,\beta = 2,\,\zeta^+=\frac12,\,\zeta^-=0,\,\eta^+ = 0,\,\eta^-=1,\,\chi^\pm = 0,\,\Omega=1.
\end{align*}
We sketch the PWS system in \figref{sketch} for $\mu=0$. It is very similar to Fig. 12 in \cite{Kuznetsov2003}. However, as opposed to \cite{Kuznetsov2003} we have included the cubic term in $X^+$ which gives rise to an invisible tangency at $(x,y)=(1,0)$ and a return mechanism from $\Sigma^+$ to $\Sigma^-$.
\begin{figure}\begin{center}\includegraphics[width=.65\textwidth]{./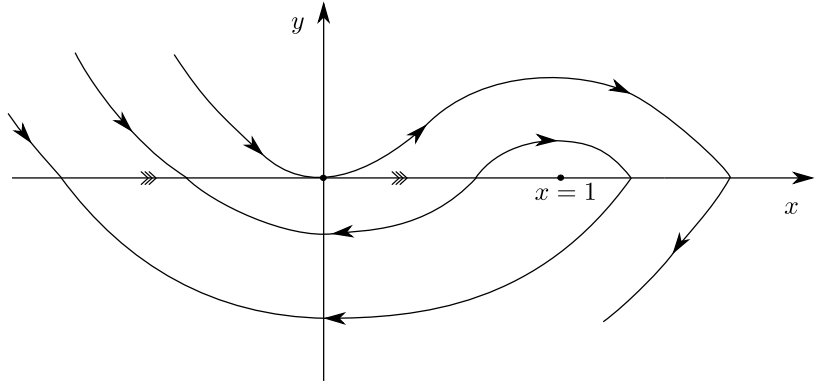}
\caption{Sketch of the PWS system \eqref{system1} for $\mu=0$. The triple-headed arrows within $\Sigma$ indicate the direction of the sliding vector field.}\figlab{sketch}
                		\end{center}
              \end{figure}

We then regularize $X$ in \eqref{system1} using the cubic function in \eqref{phiCubic}. Since $\Omega=1>0$, we can apply \propref{HopfBifurcationChartKappa2} to the regularized system and conclude that the system has an equilibrium \eqref{eqx2y2}
which undergoes a Hopf bifurcation at
\begin{align}
 \mu_2=\mu_{2,H}^c\equiv -\frac{1}{12}r_2+\mathcal O(r_2^2).\eqlab{mu2Hsystem1}
\end{align}
The first Lyapunov coefficient is obtained from \eqref{K2}:
\begin{align}
a&= -\frac{5}{64}r_2+\mathcal O(r_2^2).\eqlab{a2csystem1}
\end{align}
Since $a_2^c\equiv -\frac{5}{64}<0$ we conclude that the periodic orbits are attracting for $r_2$ (and hence $\epsilon$) sufficiently small and appear for $\mu_2>\mu_{2,H}^c$. 

This example also has a maximal canard (see Theorem~\ref{canardTheorem}). The parameter value at which it occurs is, from  \eqref{muc}, given by
\begin{align}
 \mu_{2,c}^c &= -\frac{1}{9\phi_{1,c}} r_2+\mathcal O(r_2^2)\approx -0.07806 r_2 + \mathcal O(r_2^2),\eqlab{mu2cSystem1New}
\end{align}
In the last expression we have used \eqref{phiy2c} to obtain
\begin{align*}
\phi_{1,c}
\approx 1.4233.
\end{align*}


There is a related example for the case $VI_3$ in \cite{Kuznetsov2003} on p. 2169 (after reversing time and reflecting $x\mapsto -x$) with the same values of $\delta$, $\alpha$ and $\beta$. The system in \cite{Kuznetsov2003} has $\zeta^{\pm} = \mp 1$ as the only non-zero coefficients in \eqref{zetaeta}. The reason for modifying the system given in \cite{Kuznetsov2003} is that their system gives $a_2=0$ from \eqref{K2}, for all regularization functions $\phi$. In fact a detailed calculation shows that $a\equiv 0$. The example in \cite{Kuznetsov2003} is therefore codimension two for the regularization.

In \figref{amplitude} we have used the numerical bifurcation software AUTO to track the amplitudes of the limit cycles of \eqref{system1} emanating from the equilibrium \eqref{eqx2w}. We considered $r_2=0.1$. The amplitude of the limit cycles is now measured in \figref{amplitude} using $\text{max}(x)$ instead of $\max(\hat y)$ used above. This proved to be more illustrative in this case. A dramatic increase in amplitude is seen near $\mu_{2}\approx -7.836574\times 10^{-3}$.  In \figref{duck} we have illustrated three different limit cycles within the original $(x,y)$-plane. The largest limit cycle looks like a \textit{canard}. The three limit cycles occur for the following parameters:
              \begin{align*}
              \mu_2& = -7.85\times 10^{-3},\\
              \mu_2&=-7.8365738\times 10^{-3},\\
            \mu_2&=-7.8365737\times 10^{-3}.
              \end{align*} 
The difference between the last two parameters is $10^{-11}$. The dramatic increase of amplitude is due to the canard explosion phenomenon described in \propref{canardExplosion}.
Numerically we found the following canard value
\begin{align}
 \mu_{2,c}^c= -7.8365738\times 10^{-3}.\eqlab{mu2cNum}
\end{align}
This value is in good agreement with \eqref{mu2cSystem1New} for $r_2=0.1$. Note that in comparison to the classical \textit{canard} relaxation oscillation in the van der Pol system, the duck's head and chest are in our case, not due to motion along a curved slow manifold. Instead they are due to regular motion within $\Sigma_{\pm}$ following the regular vector fields $X^\pm$, respectively. It is the motion along the slow manifold that creates the straight back of the duck. Also in the present case these different types of motions occur on an identical time-scale. There is a slow-fast behaviour but it is hidden and only visible through the scaling $\hat y = y/\epsilon$.  



\begin{figure}\begin{center}\includegraphics[width=.65\textwidth]{./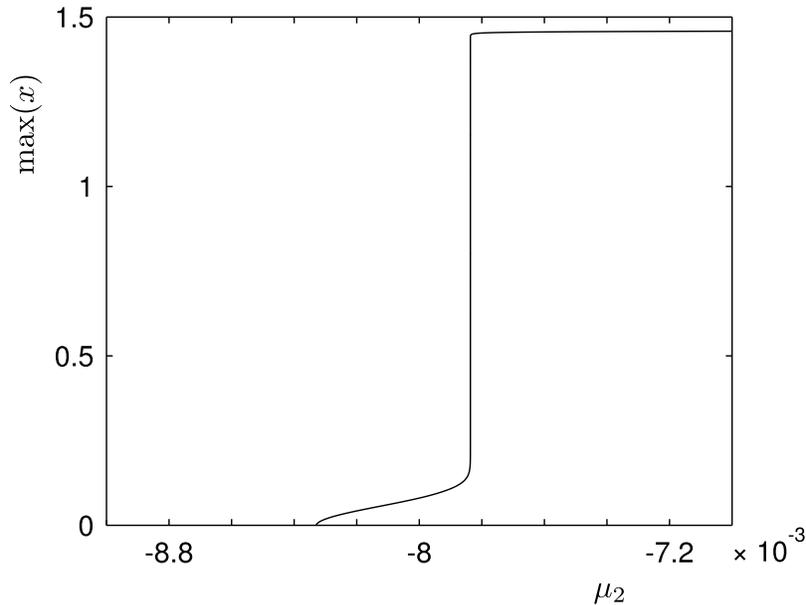}
\caption{The amplitude of the limit cycles, measured in terms of $\text{max}(x)$, as a function of $\mu_2$. The amplitude \textit{explodes} due to the presence of a maximal canard around $\mu_{2}\approx-7.8366 \times 10^{-3}$. The flat part beyond the canard explosion gives rise to the \textit{canard}-like limit cycles similar the one shown in \figref{duck}.}\figlab{amplitude}
                		\end{center}
              \end{figure}

              \begin{figure}\begin{center}\includegraphics[width=.65\textwidth]{./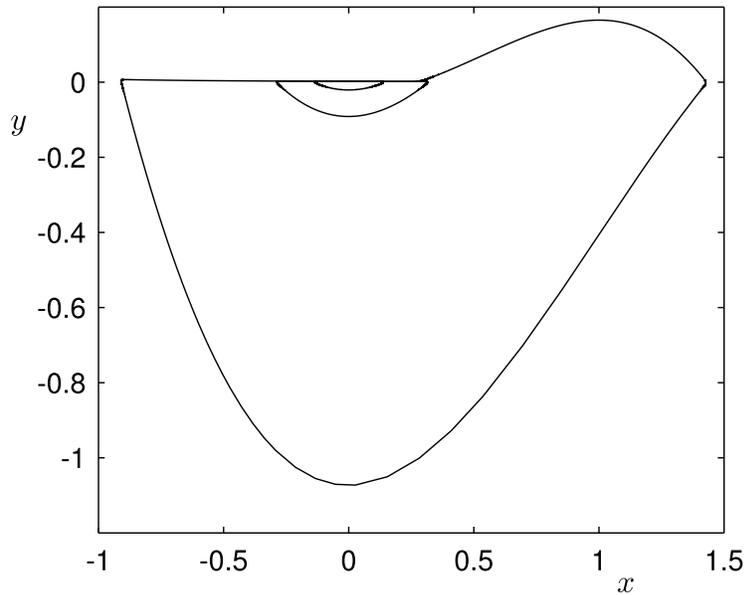}
\caption{Three different limit cycles. Due to the canard explosion phenomenon (see \figref{amplitude}) the difference in parameter between the largest and the second largest limit cycle is extremely small: $10^{-11}$.  }\figlab{duck}
                		\end{center}
              \end{figure}


              Now we replace the cubic regularization function in \eqref{phiCubic} by the following septic $C^1$ regularization function\footnote{The superscript $s$ now stands for \textit{septic}.}:
              \begin{align}
             \phi^s(\hat y) = -\frac{55}{54}\hat y^7+\frac{83}{54}\hat y^5-\frac{14}{27}\hat y^3+\hat y,\quad \hat y\in (-1,1).\eqlab{phiNew}
              \end{align}
                            This regularization function has been constructed so that $a_2$, using \eqref{K2}, becomes
              \begin{align*}
               a_2^s = \frac{5}{64}.
              \end{align*}
This value is just the negative of the previous value $a_2^c$ in \eqref{a2csystem1}. Hence periodic orbits emanating from the Hopf bifurcation are repelling and appear for $\mu<\mu_{2,H}$ where now
\begin{align}
\mu_{2,H}=\mu_{2,H}^s \equiv -\frac18 \sqrt{\epsilon}+\mathcal O(\epsilon).\eqlab{mu2Hsystem1New}
\end{align}
The canard value $\mu_{2,c}$ also changes and becomes
\begin{align}
 \mu_{2,c}^s&\approx -0.12188 \sqrt{\epsilon} + \mathcal O(\epsilon).\eqlab{mu2cSystem1Newer}
\end{align}
We again use AUTO with $r_2=0.1$ to continue periodic orbits from the Hopf bifurcation at $\mu=\mu_{2,H}^s$ \eqref{mu2Hsystem1New}. We obtain the bifurcation diagram in \figref{amplitudeNew}. As opposed to \figref{amplitude} we now observe a saddle-node (SN) bifurcation, which occurs before the canard explosion phenomenon. 

\begin{figure}\begin{center}\includegraphics[width=.65\textwidth]{./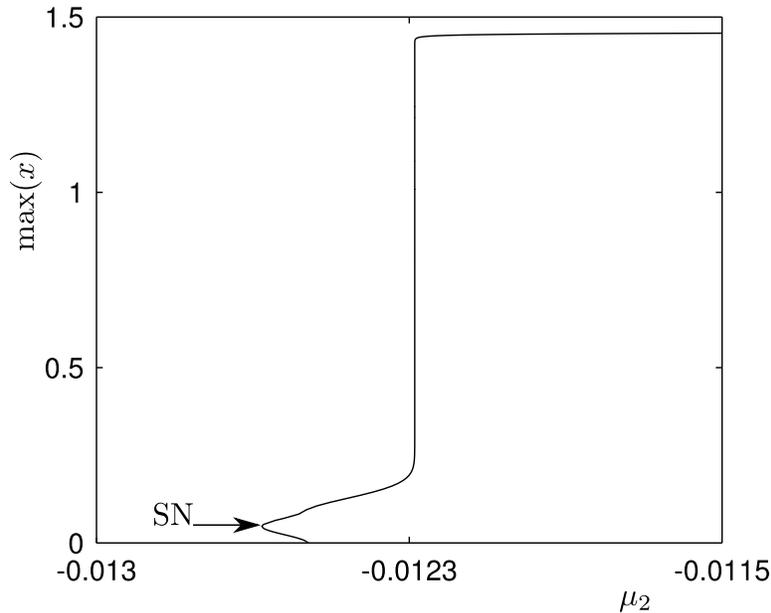}
\caption{This diagram shows the amplitude of the limit cycles, measured in terms of $\text{max}(x)$, as a function of $\mu_2$ using the septic regularization function \eqref{phiNew} to regularize the system \eqref{system1}. The amplitude \textit{explodes} due to the presence of a maximal canard around $\mu_{2}\approx--1.22369 \times 10^{-2}$. }\figlab{amplitudeNew}
                		\end{center}
              \end{figure}

\begin{remark}
Fixing the values of $\delta$, $\alpha$ and $\beta$ it is straightforward to construct a family of model systems for $VI_3$ where the Lyapunov coefficients corresponding to regularization functions $\phi^l$ and $\phi^c$ (see \eqsref{phiLinear}{phiCubic}) have opposite signs: $a_2^la_2^c<0$. A simple example is the following:
\begin{align}
 X^+(x,y) = \begin{pmatrix}
             1+7x\\
             x+8x^2
            \end{pmatrix},\quad X^-(x,y,\mu) = \begin{pmatrix}
             -1+6x\\
             -\frac32 (x-\mu)+8(x-\mu)^2
            \end{pmatrix},\nonumber
\end{align}
where 
\begin{align*}
 a_2^l&=-\frac58,\quad 
 a_2^c=\frac{7}{32}.
\end{align*}
However, we have not presented the details of this case since the saddle-node bifurcation occurs very close to the canard value and therefore it is not as clearly visible as the saddle-node in \figref{amplitudeNew}.
\end{remark}

\begin{remark}
This ``duck'' part in \figref{duck} is not covered by our results. However, it seems very plausible that the results in \cite{krupa_relaxation_2001} can be extended to this case too.
\end{remark}

\section{Discussion and Conclusions}\seclab{conclusions}
In this paper, we have considered the regularization of the codimension one two-fold bifurcation in planar PWS systems. The PWS two-fold bifurcation is dynamically very interesting as it may include singular canards, pseudo-equilibria and limit cycles. Using the blowup method of Krupa and Szmolyan \cite{krupa_extending_2001}, we continued these objects into the regularization and we related the PWS bifurcations to standard smooth bifurcations. 
Perhaps most interestingly, we were able to show that the regularization can induce saddle-node bifurcations of the limit cycles. The results were illustrated by numerical examples.

{There are two questions that emerge from this work that we feel are worthy of further discussion. What light can regularization shed on the original PWS system? Is the introduction of saddle-node bifurcations a necessary consequence of regularization? 

For the first question, it is clear that singular canards and pseudo-equilibria of the PWS system are limits of equivalent objects in the regularized system. Similarly, limit cycles in the PWS two-fold case $II_2$ are limits as $\epsilon\rightarrow 0$ of limit cycles in the regularized case $II_2^{\epsilon}$, at least ``macroscopically''; the saddle-node bifurcations occur ``microscopically'' within chart $\kappa_2$. In comparison, the PWS case $VI_3$ is more singular. It possesses backwards and forwards non-uniqueness of orbits due to the presence of stable and unstable sliding. In particular, it is possible to identify closed ``singular cycles'', reminiscent of singular cycles in slow-fast systems such as the van der Pol system (see \figref{VI3vdP}). Our analysis showed that these singular cycles are limits of periodic orbits of the regularization. Interestingly, the quantity $\Delta_{VI_3}$ defined in \eqref{Beqn} only depends on the PWS system, giving us an insight into the stability of a very singular object. The limit cycles of the regularization undergo a canard explosion phenomenon which gives rise to a very rapid amplitude increase of local periodic orbits. This can lead to global limit cyles as it was shown in \secref{numericsVI3} and \figref{duck}. 
 

The second question is much broader. In this paper, we have considered planar two-folds, subject to the Sotomayor and Teixeira \cite{Sotomayor96} regularization. We have shown that the criticality of Hopf bifurcations depends on the regularization function and generically it is possible to induce saddle-node bifurcations by varying the regularization function. But we have not shown how many saddle-node bifurcations may exist. 
Perhaps there are other PWS systems where the regularization does not induce bifurcations. Or there may be systems where other types of behaviour occur upon regularization. In addition there are other regularizations that could be considered. 
}



\bibliography{refs}
\bibliographystyle{plain}


\end{document}